\documentclass[a4paper,10pt]{amsart}
\usepackage[left=2.7cm,right=2.7cm,top=3.5cm,bottom=3cm]{geometry}

\usepackage[T1]{fontenc}
\usepackage[latin1]{inputenc}
\usepackage{lmodern}
\usepackage[english]{babel}
\usepackage[autostyle]{csquotes}

\usepackage{amsthm,amssymb,amsmath,amsfonts,mathrsfs,amscd,amsbsy,dsfont,verbatim}
\usepackage[all,cmtip]{xy}
\usepackage{mathtools}
\usepackage{graphicx}
\usepackage{mathrsfs}
\usepackage{centernot}
\usepackage{enumerate}
\usepackage{latexsym}
\usepackage{mathbbol}
\usepackage{stmaryrd}
\usepackage{tikz}
\usepackage{tikz-cd}
\usepackage{pdflscape}
\usepackage[mathcal]{euscript}
\usepackage[hidelinks]{hyperref}

    \DeclareFontFamily{U}{wncy}{}
    \DeclareFontShape{U}{wncy}{m}{n}{<->wncyr10}{}
    \DeclareSymbolFont{mcy}{U}{wncy}{m}{n}
    \DeclareMathSymbol{\Sha}{\mathord}{mcy}{"58} 

\theoremstyle{plain}
\newtheorem{theorem}{Theorem}[section]
\newtheorem{corollary}[theorem]{Corollary}
\newtheorem{lemma}[theorem]{Lemma}

\newtheorem{proposition}[theorem]{Proposition}

\newtheorem{innertheoremintro}{Theorem}
\newenvironment{theoremintro}[1]
  {\renewcommand\theinnertheoremintro{#1}\innertheoremintro}
  {\endinnertheoremintro}

\theoremstyle{definition}
\newtheorem{definition}[theorem]{Definition}
\newtheorem{assumption}[theorem]{Assumption}

\theoremstyle{remark}
\newtheorem{remark}[theorem]{Remark}
\newtheorem{example}[theorem]{Example}

\def\Q{\mathbf Q}
\newcommand{\Ql}{\Q_{\ell}}
\def\Z{\mathbf Z}
\newcommand{\Zl}{\Z_{\ell}}

\def\Alg{\ul{\mr{Al}}\mr{g}}

\def\GL{\mathrm{GL}}

\def\ul#1{\underline{#1}}

\def\mr#1{\mathrm{#1}}
\def\im{\mr{im}}

\def\Xcal{\mathcal{X}}
\def\Lcal{\mathcal{L}}
\def\Rcal{\mathcal{R}}
\def\Ccal{\mathcal{C}}
\def\Dcal{\mathcal{D}}
\def\Bbf{\mathbf{B}}
\def\Rep{\underline{\mathrm{Re}}\mathrm{p}}

\def\vect{\underline{\mathrm{vect}}}
\def\AbVar{\underline{\mathrm{AbVar}}}
\newcommand{\res}{\mathrm{res}}
\newcommand{\bD}{D_L}
\renewcommand{\hom}{\mr{Hom}}

\DeclareMathOperator{\Aut}{Aut}
\DeclareMathOperator{\Sel}{Sel}
\DeclareMathOperator{\Gal}{Gal}

\DeclareMathOperator{\Ram}{Ram}
\DeclareMathOperator{\End}{End}
\DeclareMathOperator{\Ind}{Ind}

\DeclareMathOperator{\Twist}{Twist}

\definecolor{Indigo}{rgb}{0.2,0.1,0.7}
\definecolor{Violet}{rgb}{0.5,0.1,0.7}
\definecolor{White}{rgb}{1,1,1}
\definecolor{Green}{rgb}{0.1,0.9,0.2}

\title[Local-global principle for twists]{On the local-global principle for twists of abelian varieties and Galois representations}

\author{Nirvana Coppola}
\address{Department of Mathematics, Universit\`{a} di Padova}
\email{nirvana.coppola@unipd.it}

\author{Lorenzo La Porta}
\address{Department of Mathematics, Universit\`{a} di Padova}
\email{lorenzo.laporta@protonmail.com}

\author{Matteo Longo}
\address{Department of Mathematics, Universit\`{a} di Padova}
\email{matteo.longo@unipd.it}

\begin{document}


\begin{abstract}
This paper investigates the validity of a local-global principle for finite twists of a large class of objects endowed with a continuous action of the absolute Galois group of a given number field, such as abelian varieties, modular forms and Galois representations. Our aim is to determine when, for $m$ a positive integer, twists that are given locally by characters of order $m$ are realised by a global character of the same order. We define and study a ``Tate--Shafarevich cohomology set'' that governs the obstruction to the local-global principle for $m$-atic twists and we prove that this set is finite. Finally, we apply our results and prove several instances of the local-global principle in various examples.
\end{abstract}

\maketitle

\tableofcontents

\section{Introduction}

Given a twist of an ``object'' defined over a number field (e.g.\ an abelian variety up to isogeny) satisfying certain local constraints, it is natural to ask whether these constraints are satisfied globally. In this paper, we investigate this problem in the case when the twists are given by some cyclic group of automorphisms of the object. We do so with a view towards two main concrete examples: abelian varieties and Galois representations. Let us first discuss in more detail the case of abelian varieties.

Let \(K\) be either a \(p\)-adic or a number field and write \(G_K \coloneqq\Gal(\overline{K}/K)\) for the absolute Galois group of \(K\), where \(\overline{K}\) is a fixed algebraic closure of \(K\). 
Let \(A\) and \(B\) be two abelian varieties defined over \(K\). We say that \(B\) is a \emph{twist} (up to isogeny) of \(A\) if there exists a finite extension \(M\) of \(K\) such that \(A_M\) and \(B_M\) are isogenous, where \(A_M\), respectively \(B_M\), denotes the base change of \(A\), respectively \(B\), to \(M\). Write \(D_{\overline K} \coloneqq \End(A_{\overline{K}}) \otimes_\Z \Q\), where 
\(\End(A_{\overline{K}})\) is the endomorphism ring of \(A_{\overline{K}}\). The twists of \(A\) are in bijection with elements of the cohomology set \(H^1(G_K, D_{\overline{K}}^\times)\). 
Let \(m \geq 1\) be an integer, denote by \(\mu_m\) the group of \(m\)-th roots of unity in \(\overline{\Q}\) and let \(\zeta_m\in\mu_m\) be a primitive \(m\)-th root of unity. Assume that \(D_{\overline{K}}\) is a \(\Q(\zeta_m)\)-algebra and that the subgroup \(\mu_m \subseteq D_{\overline{K}}^\times\) is \(G_K\)-stable. 
We say that \(B\) is an \emph{\(m\)-atic twist} of \(A\), if \(B\) is a twist that corresponds to an element of \(\im(H^1(G_K, \mu_m) \to H^1(G_K, D_{\overline{K}}^\times))\).
\par Assume now that \(K\) is a number field and write \(K_v\) for the completion of \(K\) at any place \(v\). We say that \(B\), a twist of \(A\), is \emph{locally \(m\)-atic}, if \(B_{K_v}\) is an \(m\)-atic twist of \(A_{K_v}\), for all finite places \(v\) of \(K\). We say that \((A,m)\) satisfies the \emph{local-global principle} if the following implication holds for any abelian variety \(B\) over \(K\): 
if \(B\) is a locally \(m\)-atic twist of \(A\), then \(B\) is an \(m\)-atic twist of \(A\). 
\par By Faltings's theorem, \cite{faltings}, the local-global principle holds for \(m = 1\) and all \(A\). The case \(m = 2\) was studied in \cite{ACF, fitebbasta}. In that setting, the local-global principle holds for abelian varieties of dimension at most \(3\). On the other hand, there are counterexamples in dimension \(4\) and, assuming that \(A\) is geometrically simple, in dimension \(12\).
In this work, we mainly focus on positive results in the case when \(m \geq 3\) is odd.

In analogy with the theory of rational points of abelian varieties, to study local-global properties, we introduce the two cohomological objects \(\Sel_m(G_K, D_{\overline{K}}^\times)\) and \(\Sha_m(G_K, D_{\overline{K}}^\times)\), which we call \emph{Selmer} and \emph{Tate--Shafarevich} sets, respectively (see Definition \ref{def : selsha}). These sets are designed to control the failure of the local-global principle for \(m\)-atic twists. We also remark that when \(D_{\overline{K}}^\times\) is commutative, these are groups, but, in general, they are just pointed sets. 
The failure of the local-global principle for \((A,m)\) is equivalent to 
\(\Sha_m(G_K, D_{\overline{K}}^\times)\) having more than one point. We investigate the set 
\(\Sha_m(G_K, D_{\overline{K}}^\times)\) by 
combining cohomological techniques and results from the theory of abelian varieties, and our first result is the finiteness of this obstruction. 
\begin{theoremintro}{A}[{Theorem \ref{theorem: finite sha}}]
 The pointed set \(\Sha_m(G_K, D_{\overline{K}}^\times)\) is finite. 
\end{theoremintro}
This result essentially follows from the representation theory of the skew group ring \(D_{\overline{K}}^\times \rtimes G\), where \(G\) is the finite quotient through which \(G_K\) acts on \(D_{\overline{K}}^\times\).

We now explain some positive results on the local-global principle for \((A,m)\). 
Denote by \(\End(A)\) the endomorphism ring of \(A\), and set \(D_K \coloneqq \End(A) \otimes_\Z \Q\). We mainly work with \(A\) geometrically simple, in one of the two extremal cases: either \(D_K^\times \cap \mu_m\) is trivial, Section \ref{sec: simp CM}, or \(\mu_m \subset D_K^\times\), see Section \ref{ssec: muminvar}, \ref{ssec: further apps}. We mention here two significant examples of what we prove.
\begin{theoremintro}{B}[Theorem \ref{thm: main_Gal}]
Let \(A\) be geometrically simple, and assume that \(D_{\overline{K}}\) is commutative. For all odd \(m \ge 3\) such that \(D_K^\times \cap \mu_m\) is trivial, \((A,m)\) satisfies the local-global principle.    
\end{theoremintro}
\begin{theoremintro}{C}[Corollary \ref{cor : easylgp}]
Let \(A\) be geometrically simple of dimension \(g\), such that \(\mu_m \subseteq D_K^\times\). For all \(g \leq 8\) and odd \(m \ge 3\), \((A,m)\) satisfies the local-global principle. 
\end{theoremintro}

Theorem C is mainly a consequence of a more general result, Theorem \ref{thm : coprimality}, in which we show that, under the condition \(\mu_m\subseteq D_K^\times\) (as in Theorem C), the local-global principle for \((A,m)\) holds when \(m\) is prime to the integer \(d = \frac{2\dim(A)}{[Z:\Q]}\), where \(Z\) is the center of \(D_K\). 

Even if so far we have only discussed our results in the case of abelian varieties, 
our definitions (e.g.\ Tate--Shafarevich and Selmer sets) and techniques apply \emph{verbatim} also to Galois representations.  
Let \(F\) be a field and \(V\) a finite-dimensional \(F\)-vector space. We say that two Galois representations \(\rho_1, \rho_2 \colon G_K \to \GL_F(V)\) are \emph{potentially equivalent}, or \emph{(finite) twists} of each other, if they are equivalent when restricted to a finite index subgroup. In that case, we say that the twist is \emph{\(m\)-atic} if there is a character \(\chi \colon G_K \to F^\times\) of finite order dividing \(m\) such that \(\rho_1 \sim \chi \rho_2\). Moreover, when \(K\) is a number field, \(\rho_1\) is a \emph{locally \(m\)-atic twist} of \(\rho_2\) if, for any decomposition group \(G_{K_v} \subseteq G_K\), \({\rho_1}_{|G_{K_v}}\) is an \(m\)-atic twist of \({\rho_2}_{|G_{K_v}}\). For a representation \(\rho\colon G_K \to \GL_F(V)\) and \(m\ge 1\) an integer, we say that \((\rho, m)\) satisfies the \emph{local-global principle} if any locally \(m\)-atic twist of \(\rho\) is \(m\)-atic. In the case of Galois representations attached to modular forms, we prove the following. 
\begin{theoremintro}{D}[{Corollary \ref{corollary: lgp for mod forms}}]
Let \(f \in S_{K}(N, \epsilon),\) be a normalised cuspidal newform of weight \(k \ge 2\), level \(N\), nebentypus \(\epsilon\) and Hecke field \(F_{f}\). For \(\lambda\) a prime of \(F_{f_i}\) dividing \(\ell\), take \(\rho_f \colon G_\Q \to \GL_2(F_{f, \lambda})\) the associated Galois representation. Then, \((\rho_f, m)\) satisfies the local-global principle for all \(m \ge 1\).
\end{theoremintro}

Both abelian varieties and Galois representations are instances of a more general framework that we formalise using the language of pseudo-functors and descent, see Section \ref{sec: galois descent}. This framework, which makes the notion of ``object'' mentioned at the beginning more precise, allows us to compare the local-global principle for abelian varieties and their associated Galois representations, see Lemma \ref{lemma: if lgp for rep then lgp for abvar}. 

\subsection*{Acknowledgements}
The authors are grateful to Emiliano Ambrosi, Julian Demeio, Francesc Fit\'e, Andrea Gallese, Davide Lombardo, and Matteo Verzobio for fruitful conversations about this project.
\par NC and LLP are supported by the \emph{``Piano di Sviluppo Dipartimentale''} with the title \emph{``Enhancing the Research in the Math Dept'', CUP C93C24000160005}. ML is supported by PRIN 2022. NC and ML are members of the Indam group GNSAGA.

\section{Setup and notation}\label{sec: setup}

Let \(p,\, \ell\) be primes, fixed throughout the paper.

\subsubsection*{Category theory}
Let \(\Ccal\) be a category and \(X, Y,\) objects of \(\Ccal\). We use the usual notations \(\hom_{\Ccal}(X, Y)\), \(\mr{Isom}_{\Ccal}(X, Y)\), \(\End_{\Ccal}(X)\), \(\Aut_{\Ccal}(X),\) for the various obvious sets of morphisms. We write \(\ul{\mr{Cat}}\) for the category of all small\footnote{Here \emph{small} means an element of some fixed Grothendieck universe.} categories with functors as morphisms. For any group \(H\), we write \(\Bbf H\) for the associated groupoid  with one object (or \emph{classifying space} of \(H\)), that is, the category with a unique object \(\ast\) and \(\End_{\Bbf H}(\ast) = H\). 

\subsubsection*{Fields of definition}
In what follows, \(K\) will denote a perfect field. In \S\ref{sec: twist of a rep}, from Assumption \ref{ass: number field} onwards, \(K\) will be a number field. We will also consider the case when \(K\) is a \(p\)-adic field, that is, a finite extension of \(\Q_p\). Fix \(\overline{K}\) an algebraic closure of \(K\). 
For any extension \(M/K\) contained in \(\overline{K}\), we write \(G_M = \Gal(\overline{K}/M)\), which is naturally a subgroup of \(G_K = \Gal(\overline{K}/K)\).

\subsubsection*{Number fields} Suppose that \(K\) is number field. We denote by \(\Sigma_K\) the set of places of \(K\). Write \(\Sigma^\infty_K \subset \Sigma_K\) for the set of places \(v \nmid \infty\), that is, the finite places of \(K\). For each \(v \in \Sigma_K\), we denote by \(K_v\) the completion of \(K\) at \(v\) and by \(k(v)\) the residue field of \(K_v\), of which we fix an algebraic closure \(\overline{k}(v)\). We fix, for all \(v \in \Sigma_v\), an algebraic closure \(\overline{K}_v\) of \(K_v\) and an embedding \(\iota_v \colon \overline{K} \to \overline{K}_v\). The embedding \(\iota_v\) induces an injection \(G_{K_v} \to G_K\), for all \(v \in \Sigma_K\), so we can identify \(G_{K_v}\) with a subgroup of \(G_K\),  which we call a \emph{decomposition group} of \(G_K\) at \(v\).

If \(A\) is a group, not necessarily abelian, with a continuous action of \(G_K\) with respect to the discrete topology on \(A\), we write \(H^1(G, A)\) for the pointed set of continuous Galois cohomology. See \cite[\S5]{Serre-Galois} for more details.


\subsubsection*{Fields of coefficients and representations} Let \(F\) be a field, which we regard as the ``field of coefficients'', and \(V\) a vector space over \(F\) of finite dimension \(n\). We fix an algebraic closure \(\overline{F}\) of \(F\). 

We consider representations \(\rho\colon G_K \to \GL_F(V)\). We write \(\End_F(V)\) for the \(F\)-linear endomorphisms of \(V\) and \(\End_F(\rho)\) for the 
\(F\)-linear endomorphisms of \(V\) as \(G_K\)-module via \(\rho\). Let \(G_K\) act on \(\End_F(V)\) via \(\sigma \mapsto (f\mapsto \rho(\sigma)f\rho(\sigma)^{-1})\). Therefore, \(\End_F(V)^{G_K}=\End_F(\rho)\) and, more generally, \(\End_F(V)^{G_M}=\End_F(\rho_{|G_M})\),
for any extension \(M/K\) contained in \(\overline{K}\). 
We will  often write \(D_M(\rho)\), or \(D_M\), instead of \(\End_F(\rho_{|G_M})\) and \(D\) instead of \(D_K\).

If \(F\) is an \(\ell\)-adic field (e.g.\ \(E_\lambda\), for \(E\) a number field and \(\lambda \mid \ell\), like in \S\ref{sssec: extra end}), we will assume, unless otherwise stated, that \(\rho\) is continuous with respect to the Krull topology on \(G_K\) and the \(\ell\)-adic topology on \(\GL_F(V)\). In that case, we say that \(\rho\) is \emph{geometric} if it is semisimple, unramified outside a finite set of places \(\Ram(\rho) \subset \Sigma_K\), and de Rham at all \(v \in \Sigma_K\) such that \(v \mid \ell\), see \cite{fontainereppadic, fontainemazur}. In practice, we can ignore the condition at the places lying over \(\ell\).

\subsubsection*{Roots of unity} For a field \(E\) and an integer \(m \ge 1\), denote \(\mu_m(E)\) the group of \(m\)-th roots of unity in \(E\). We write \(\mu_m = \mu_m(\overline{F})\). Let \(\zeta_m \in \mu_m\) denote a primitive \(m\)-th root of unity, fixed throughout the paper.

\subsubsection*{Abelian varieties}
Let \(A\) be an abelian variety over \(K\). For a field extension \(M/K\), we denote by \(\End(A_M)\) the ring of endomorphisms of \(A\) defined over \(M\). We set \(\End^0(A_{M}) \coloneqq \End(A_{M}) \otimes_\Z \Q\). Similarly, if \(B\) is another abelian variety, we write \(\hom^0(A_M, B_M) \coloneqq \hom(A_M, B_M) \otimes_\Z \Q\) for the morphisms of abelian varieties up to isogeny. If \(M=K\), we simply write \(\hom^0(A, B)\) and \(\End^0(A)\).
We have an action of \(G_K\) on \(\End(A_{\overline{K}})\), defined as follows. 
For a point \(P\in A(\overline{K})\) and an isogeny \(\psi : A\rightarrow A\), 
we set \((\sigma(\psi))(P) \coloneqq \sigma(\psi(\sigma^{-1}(P)))\). By scalar extension, we have an induced action of \(G_K\) on \(\End^0(A_{\overline{K}})\) and, by definition, \(\End^0(A_{\overline{K}})^{G_K}=\End^0(A_{K})\). We denote the \(\ell\)-adic Tate module of \(A\) by \(V_\ell(A) \coloneqq T_\ell(A) \otimes_{\Zl} \Ql\). This is also endowed with a natural action of \(G_K\).
As in the case of representations, we will  often write \(D_M(A)\), or \(D_M\), instead of 
\(\End^0(A_M)\) and \(D\) instead of \(D_K\).

\subsubsection*{Pointed sets} For \(f \colon (X, x) \to (Y, y)\) a map of pointed sets, we write \(\ker(f) \coloneqq f^{-1}(y)\).

\section{Galois descent}\label{sec: galois descent}
For this section, let \(K\) be perfect and \(F\) any field. Our aim is to set up the minimal amount of formalism needed to talk about ``objects with a Galois action''. We achieve this using the language of pseudo-functors and descent data, focusing on the case of Galois descent. This level of generality will be useful to compare different classes of objects with a Galois action, most importantly abelian varieties and their Galois representations, and to then study their twists. A standard reference is \cite{FGA}.

\begin{definition}
Consider \(\Ccal\) a category. A \emph{categorical action} of a group \(H\) on the category \(\Ccal\) is the datum of a pseudo-functor \(T \colon \Bbf H \to \ul{\mr{Cat}}\) that sends \(\ast\) to \(\Ccal\).
\end{definition}
This a special case of a \emph{fibred category}, \cite[Definition 1.1]{FGA}. The definition means that for all \(g \in H\), we have \(T(g) \colon \Ccal \to \Ccal\) an autoequivalence and, for \(g, h \in H\), we have a natural isomorphism
\[
    \alpha_{g,h} \colon T(g) \circ T(h) \xrightarrow{\sim} T(gh),
\]
as well as \(\mr{Id}_T \colon T(1_H) \xrightarrow{\sim} \mr{id}_\Ccal\), 
satisfying natural compatibilities.
In what follows, we always assume that \(\Ccal\) is an \(F\)-linear abelian category and that the \(T(g)\)'s are exact \(F\)-linear functors. 
\begin{definition}
    A \emph{descent datum} for \(X \in \Ccal\) with respect to \(T\) is a family \(\varphi\) of isomorphisms \(\varphi_g \colon T(g)(X) \to X\) such that, for all \(g, h \in H\), the diagram
\[
\begin{tikzcd}
    & T(g)(T(h)(X)) \arrow[d, "{\alpha_{g, h}}(X)"] \arrow[rr, "\varphi_g \circ T(g)({\varphi_{h}})"] && X \\
    & T(gh)(X) \arrow[urr, "{\varphi_{gh}}", swap]
\end{tikzcd}
\]
commutes and \(\varphi_{\mr{id}_H} = \mr{Id}_T(X)\). We denote the datum of an object \(X \in \Ccal\) with a fixed descent datum \(\varphi\) by \((X, \varphi)\).
\end{definition}
Given \((X, \varphi),  (Y, \psi)\) objects of \(\Ccal\) with descent data, we can define an action of \(H\) on \(\hom_\Ccal(X, Y)\) by taking, for all \(g \in H\) and \(f \in \hom_\Ccal(X, Y)\), the composition
\[
    g(f) \coloneqq \psi_g \circ T(g)(f) \circ \varphi_g^{-1}.
\]
One can check that this defines a group action. It follows from our assumptions that this action is always \(F\)-linear. To ease notations, when discussing this action on \(\hom_{\Ccal}(X, Y)\), we omit the descent data \(\varphi, \psi\), even though the action itself depends on them. Notice that one can think of an element \(f \in \hom_{\Ccal}(X, Y)^{H}\) as a morphism of descent data, because, for all \(g \in H\), we have the commutative diagram
\[\begin{tikzcd}
	{T(g)(X)} & X \\
	{T(g)(Y)} & Y.
	\arrow["{\varphi_g}", from=1-1, to=1-2]
	\arrow["{T(g)(f)}"', from=1-1, to=2-1]
	\arrow["f", from=1-2, to=2-2]
	\arrow["{\psi_g}"', from=2-1, to=2-2]
\end{tikzcd}\]
We say that \((X, \varphi)\) is \emph{equivalent} to \((Y, \psi)\) \emph{via \(f\)} if there is an isomorphism \(f \in \hom_{\Ccal}(X, Y)^{H}\). More generally, if \(H' \le H\) is a subgroup, we say that \((X, \varphi)\) is \emph{equivalent} to \((Y, \psi)\) \emph{over \(H'\)} if there is an isomorphism \(f \in \hom_{\Ccal}(X, Y)^{H'}\).

We are mostly interested in the case \(H = G_K\), so, from now on, we restrict to that. For \(G_M \subseteq G_K\), where \(M\subseteq \overline{K}\) is a finite extension of \(K\), we will talk of morphisms, or equivalences, of descent data ``over \(M\)'', instead of over \(G_M\).
\begin{remark}
One should think of an object \(X\) of \(\Ccal\) as an object ``defined over \(\overline{K}\)'' and a pair \((X, \varphi)\), where \(\varphi\) is a descent datum, as an object ``defined over \(K\)'' underlying \(X\).
\end{remark}
The group \(G_K\) carries a natural topology which we want to take into consideration.
For \((X, \varphi),  (Y, \psi),\) as above, the action of \(G_K\) on \(\hom_\Ccal(X, Y)\) is continuous with respect to the discrete topology on the latter, if and only if the natural inclusion
\[
    \bigcup_{M/K} \hom_\Ccal(X, Y)^{G_M} \subseteq \hom_\Ccal (X, Y),
\]
is an equality, where \(M \subseteq \overline{K}\) runs over the finite extensions of \(K\).
Moreover, if \(M\subseteq M'\) is an inclusion of finite extensions of \(K\), then \(\End_{\Ccal}(X)^{G_M} \subseteq \End_{\Ccal}(X)^{G_{M'}}\). A routine check shows that \(\bigcup_{M/K} \End_\Ccal(X)^{G_M}\) is always an \(F\)-linear sub-algebra of \(\End_\Ccal(X)\).

We can thus view 
\[
    \End_{\Ccal}(X)^\mr{sm} \coloneqq \bigcup_{M/K} \End_\Ccal(X)^{G_M}
\]
as the sub-algebra of \emph{smooth vectors} for the action given by \(T\) and the descent datum \(\varphi\). One can, more generally, for \((X, \varphi),  (Y, \psi),\) define
\[
    \hom_{\Ccal}(X, Y)^\mr{sm} \coloneqq \bigcup_{M/K} \hom_\Ccal(X, Y)^{G_M}.
\]
The algebra \(\End_{\Ccal}(X)^\mr{sm}\) inherits a continuous action of \(G_K\).

For \(M\) a finite extension of \(K\), we will sometimes write, like in the case of Galois representations, \(D_M(X, \varphi)\), or \(D_M\) if \((X, \varphi)\) is clear from context, instead of \(\End_{\Ccal}(X)^{G_M}\).

\section{Minimal endomorphism field}\label{section: min endo field}

Our aim is to define the notion of \emph{minimal endomorphism field}. For our purposes, the main cases of interest are those of \(F\)-linear representations of \(G_K\) and abelian varieties over \(K\) up to isogeny. Nevertheless, we set things up in greater generality, to make the connection between the two, a priori distinct, notions as plain as possible.

\subsection{General definition}
Let \(T\) and \((X, \varphi)\) be as in \S\ref{sec: galois descent}.

\begin{definition}\label{def: min endo field}
 We define the \emph{minimal endomorphism field} of \((X, \varphi)\) as the Galois extension \(L_{\mr{end}}(X, \varphi)\) of \(K\) fixed by the kernel of the action of \(G_K\) on \(\End_\Ccal(X)^\mr{sm}\).
\end{definition}
Again, to ease notations, we sometimes write simply \(L_{\mr{end}}(X)\) and omit the descent datum \(\varphi\).

Let \(\Dcal\) be another \(F\)-linear abelian category with a categorical action \(S \colon \Bbf G_K \to \ul{\mr{Cat}}\), \(S(\ast) = \Dcal\), by exact \(F\)-linear auto-equivalences on \(\Dcal\). There is a natural way to relate the categorical actions \(T\) and \(S\). 
\begin{definition}
A \emph{realisation of \(T\) in \(S\)} is the datum of an \(F\)-linear exact functor \(R \colon \Ccal \to \Dcal\) endowed with a natural isomorphism \(\eta(\sigma, X) \colon R(T(\sigma)(X)) \cong S(\sigma)(R(X))\), functorial in \(X \in \Ccal\), for all \(\sigma \in G_K\), such that we have natural compatibilities
\begin{align*}
    \eta(\sigma\tau, X) \circ R(\alpha_{\sigma, \tau}^T(X)) &= \alpha_{\sigma, \tau}^S(X) \circ S(\sigma)(\eta(\tau, X)) \circ \eta(\sigma, T(\tau)(X)),\\
    R(\mr{Id}_T(X)) &= \mr{Id}_S(R(X)) \circ \eta(\mr{id}_{G_K}, X),
\end{align*}
functorial in \(X \in \Ccal\), for all \(\sigma, \tau \in G_K\).
\end{definition}

We collect here some formal results.
\begin{lemma}\label{lemma: min endo gen prop} Let \(T\) and \((X, \varphi)\) be as above.
\begin{enumerate}
    \item \label{lemma: min endo gen prop 1} If \(\End_{\Ccal}(X)^\mr{sm}\) has finite dimension over \(F\), then \(L_{\mr{end}}(X)\) is finite over \(K\).
    \item \label{lemma: min endo gen prop 2} If \(E\) is an extension of \(F\), then \(T\) induces a categorical action \(T \otimes_F E\) of \(G_K\) on \(\Ccal \otimes_F E\) and \(L_{\mr{end}}(X) = L_{\mr{end}}(X \otimes_F E)\).
    \item \label{lemma: min endo gen prop 3} Let \(\Dcal\) and \(S\) be as above and suppose that \(R \colon \Ccal \to \Dcal\) is a fully faithful realisation of \(T\) in \(S\). Then \(L_{\mr{end}}(X, \varphi) = L_{\mr{end}}(R(X), \{R(\varphi_\sigma) \circ \eta(\sigma, X)^{-1}\}_{\sigma \in G_K})\).
\end{enumerate}
\end{lemma}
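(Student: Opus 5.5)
The plan is to reduce all three parts to the definition of \(L_{\mr{end}}(X)\) as the fixed field of the kernel \(N\) of the continuous action of \(G_K\) on \(\End_\Ccal(X)^\mr{sm}\).

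For part (\ref{lemma: min endo gen prop 1}), I pick an \(F\)-basis \(e_1, \dots, e_r\) of \(\End_\Ccal(X)^\mr{sm}\). By definition of smooth vectors, each \(e_i\) lies in some \(\End_\Ccal(X)^{G_{M_i}}\) with \(M_i/K\) finite. The action is \(F\)-linear, so the compositum \(M = M_1\cdots M_r\) satisfies \(G_M \subseteq N\). Hence \(L_{\mr{end}}(X) \subseteq M\), and in particular \(L_{\mr{end}}(X)\) is finite over \(K\).

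For part (\ref{lemma: min endo gen prop 2}), I first describe the scalar extension. The category \(\Ccal\otimes_F E\) has hom-spaces \(\hom_\Ccal(X,Y)\otimes_F E\), and I let \(T(g)\otimes E\) act by \(T(g)(f)\otimes e \mapsto\) the corresponding morphism. The transition isomorphisms \(\alpha_{g,h}\) and the descent datum \(\varphi\) extend tautologically, and the compatibilities hold because they are identities of morphisms in \(\Ccal\). The induced action on \(\End(X\otimes E) = \End_\Ccal(X)\otimes_F E\) is \(\sigma\otimes \mr{id}_E\). The key point is then the identity
\[
(\End_\Ccal(X)\otimes_F E)^{G_M} = \End_\Ccal(X)^{G_M}\otimes_F E.
\]
This holds because taking invariants is a kernel of \(F\)-linear maps \(v \mapsto \sigma v - v\), and \(-\otimes_F E\) is exact. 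Taking the union over \(M\) gives \(\End(X\otimes E)^\mr{sm} = \End_\Ccal(X)^\mr{sm}\otimes_F E\). Finally, \(\sigma\) acts trivially on \(W\otimes_F E\) if and only if it acts trivially on \(W\), since \(W \hookrightarrow W\otimes_F E\) as \(E\) is free over \(F\). So the two kernels coincide.

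For part (\ref{lemma: min endo gen prop 3}), I first check that \(\psi_\sigma := R(\varphi_\sigma)\circ\eta(\sigma,X)^{-1}\) is a descent datum. This is a diagram chase using the two compatibilities of \(\eta\) together with the naturality of \(\eta\), applied to the morphism \(\varphi_\tau\). Next I show that the bijection \(R\colon \End_\Ccal(X)\to\End_\Dcal(R(X))\) (full faithfulness) is \(G_K\)-equivariant:
\[
\psi_\sigma\circ S(\sigma)(R(f))\circ\psi_\sigma^{-1} = R(\varphi_\sigma)\,\eta(\sigma,X)^{-1} S(\sigma)(R f)\,\eta(\sigma,X)\,R(\varphi_\sigma)^{-1} = R\big(\varphi_\sigma T(\sigma)(f)\varphi_\sigma^{-1}\big),
\]
where the last step uses naturality of \(\eta(\sigma,-)\). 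Equivariance implies that \(R\) identifies the \(G_M\)-invariants on both sides for every \(M\), hence identifies the smooth parts. The kernels of the two actions therefore agree, and so do the fixed fields.

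The main obstacle is the bookkeeping in part (\ref{lemma: min endo gen prop 3}): verifying the cocycle condition for \(\psi\). This needs the first compatibility of \(\eta\) combined with naturality, used in the right order. If it gets messy, I would restrict to what the statement needs, namely that \(\psi\) is a descent datum and that the action transports. The remaining steps are formal.
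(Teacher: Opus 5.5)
Your proposal is correct and follows the same route as the paper in all three parts. For (1) you use a basis of \(\End_\Ccal(X)^\mr{sm}\) with finite fields of definition; for (2) the identity \((\End_\Ccal(X)\otimes_F E)^{G_M}=\End_\Ccal(X)^{G_M}\otimes_F E\) followed by the union over \(M\); and for (3) the \(G_K\)-equivariance of the bijection \(\End_\Ccal(X)\to\End_\Dcal(R(X))\).

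You in fact give more detail than the paper in (3), where you check that \(R(\varphi_\sigma)\circ\eta(\sigma,X)^{-1}\) satisfies the cocycle condition and derive equivariance from naturality of \(\eta\). Your compositum argument in (1) also avoids the paper's minimality step. One small point: since \(G_M\) is infinite, the invariants identity in (2) is best checked coordinatewise in an \(F\)-basis of \(E\), rather than by exactness of \(-\otimes_F E\) alone.
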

\begin{proof}
\begin{enumerate}
    \item Pick an \(F\)-basis \(f_1, f_2, \ldots, f_m \in \End_{\Ccal}(X)^\mr{sm}\). For each \(i, 1 \le i \le m\), we have a finite extension \(M_i\) of \(K\) such that \(f_i \in \End_{\Ccal}(X)^{G_{M_i}}\). Moreover, if \(M_i'\) is another finite extension such that \(f_i \in \End_{\Ccal}(X)^{G_{M_i'}}\), then \(f_i \in \End_{\Ccal}(X)^{G_{M_i \cap M_i'}}\), by the fundamental theorem of Galois theory. In particular, we can take \(M_i\) to be minimal for each \(i\). Then, \(L_{\mr{end}}(X) \subseteq \overline{K}\) is the smallest finite field extension of \(K\) that contains all of the \(M_i\)'s, which is finite.
    \item Notice that \(\End_{\Ccal}(X \otimes_F E)^{G_M} = (\End_{\Ccal}(X)\otimes_F E)^{G_M} = \End_{\Ccal}(X)^{G_M} \otimes_F E\). Moreover, since tensor products commute with colimits, we have 
    \begin{align*}
        \End_{\Ccal}(X)^\mr{sm} \otimes_F E &= \left(\bigcup_{M/K} \End_\Ccal(X)^{G_M}\right)\otimes_F E \\
        &= \bigcup_{M/K} (\End_\Ccal(X)^{G_M}\otimes_F E) = \End_{\Ccal}(X \otimes_F E)^\mr{sm}.
    \end{align*}
    Also, for \(M, M'\) finite extensions of \(K\) in \(\overline{K}\), \(\End_{\Ccal}(X)^{G_M} \subseteq \End_{\Ccal}(X)^{G_M'}\) if and only if \(\End_{\Ccal}(X\otimes_F E)^{G_M} \subseteq \End_{\Ccal}(X\otimes_F E)^{G_M'}\). This is enough to conclude.
    \item The functor \(R\), for each \(X \in \Ccal\), gives an isomorphism \(\End_{\Ccal}(X) \xrightarrow{\sim} \End_{\Dcal}(R(X)),\) which, by the definition of realisation, is compatible with the action of \(G_K\), given on the left by the descent datum \(\varphi\) and on the right by the descent datum \(R(\varphi_\sigma) \circ \eta(\sigma, X)^{-1}\) on \(R(X)\). In particular, the \(G_M\)-invariants are in functorial (in \(X\)) bijection and the minimal endomorphism fields clearly coincide. \qedhere
\end{enumerate}
\end{proof}

\subsection{Representations}\label{ssec: cat action on reps}
Consider \(\rho \colon G_K \to \GL_F(V)\) a representation as in \S\ref{sec: setup}.
The functor \(S \colon \Bbf G_K \to \ul{\mr{Cat}}\) that sends \(\ast\) to \(\vect_F\), the category of finite-dimensional \(F\)-vector spaces, and \(\sigma \in G_K\) to \(S(\sigma)= \mr{id}_{\vect_F}\) defines a categorical action of \(G_K\) on \(\vect_F\) (where we take \(\alpha_{\sigma, \tau}^S = \mr{id}_{\vect_F} = \mr{Id}_S\)). To \(\rho\) we can associate a descent datum on \(V \in \vect_F\), simply by taking \(\varphi_\sigma = \rho(\sigma)\). The same holds for any object of \(\Rep_F(G_K)\), the category of \(F\)-linear representations of \(G_K\) into finite-dimensional \(F\)-vector spaces. In fact, a descent datum for \(S\) is the same as an element of \(\Rep_F(G_K)\) and a morphism (respectively, an equivalence) of objects with descent data is just a morphism (respectively, an isomorphism) of representations.

Recall that for any finite extension \(M\) of \(K\), we write \(D_M(\rho) = D_M = \End_{F}(\rho_{|G_M})\). We have \(\End_F(V)^\mr{sm} = \bigcup_{M/K} D_M \subseteq \End_F(V)\).
\begin{lemma}\label{lemma: exist min endo field}
There is a well-defined unique minimal extension \(L\) of \(K\) such that \(\End_F(\rho)^\mr{sm} = D_L\). Moreover, \(L\) is finite and Galois over \(K\).
\end{lemma}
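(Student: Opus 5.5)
The plan is to exploit that \(\End_F(V)\) is finite-dimensional over \(F\), so the increasing union \(\End_F(V)^{\mr{sm}}=\bigcup_{M/K} D_M\) stabilises.

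\emph{Step 1: stabilisation.} Each \(D_M\) is an \(F\)-subspace of \(\End_F(V)\), which has dimension \(n^2\). The family \(\{D_M\}\), indexed by finite extensions \(M\subseteq\overline{K}\) of \(K\), is directed: \(D_M\cup D_{M'}\subseteq D_{MM'}\), since \(G_{MM'}\subseteq G_M\cap G_{M'}\). Choose \(M_0\) with \(\dim_F D_{M_0}\) maximal; this is possible because the dimensions are bounded by \(n^2\). Then for every \(M\) we have \(D_{M_0}\subseteq D_{M_0M}\), and maximality forces equality. Hence \(D_M\subseteq D_{M_0}\) for all \(M\), and so \(\End_F(\rho)^{\mr{sm}}=D_{M_0}\).

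\emph{Step 2: Galois theory.} Consider the subgroup \(N\le G_K\) of elements acting trivially on \(\End_F(V)^{\mr{sm}}\). It is normal, being the kernel of the action of \(G_K\) on \(D_{M_0}\), which is \(G_K\)-stable as the union of the \(D_M\) is. It is closed and open because it contains \(G_{M_0}\). Let \(L\) be its fixed field, which is exactly \(L_{\mr{end}}(\rho)\) of Definition \ref{def: min endo field}. Then \(L\) is finite Galois over \(K\), since \(N\) is an open normal subgroup; this also follows from Lemma \ref{lemma: min endo gen prop}(\ref{lemma: min endo gen prop 1}). By construction \(G_L=N\) fixes \(D_{M_0}\), so \(D_{M_0}\subseteq D_L\subseteq\End_F(V)^{\mr{sm}}=D_{M_0}\), giving \(D_L=\End_F(\rho)^{\mr{sm}}\).

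\emph{Step 3: minimality and uniqueness.} Suppose \(M'\) is any extension with \(D_{M'}=\End_F(\rho)^{\mr{sm}}\). Then \(G_{M'}\) acts trivially on \(\End_F(V)^{\mr{sm}}\), so \(G_{M'}\subseteq N=G_L\), i.e.\ \(L\subseteq M'\). Thus \(L\) is the unique minimal such extension, namely the intersection of all of them.

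The only slightly delicate point is Step 1, the stabilisation and directedness. Here one must note that \(\rho\) need not have finite image, so \(\End_F(V)^{\mr{sm}}\) can be a proper subspace, and closedness of the kernel \(N\) has to come from \(G_{M_0}\subseteq N\) rather than from continuity of the action on all of \(\End_F(V)\).
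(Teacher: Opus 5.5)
Your proof is correct and is essentially the paper's argument. The paper simply cites Lemma~\ref{lemma: min endo gen prop}(\ref{lemma: min endo gen prop 1}), which uses finite-dimensionality to see that the kernel of the action is open, taking a basis of \(\End_F(V)^{\mr{sm}}\) and the fields of definition of its elements where you use maximal-dimension stabilisation. Your write-up also makes explicit what the paper leaves implicit: the \(G_K\)-stability of \(\End_F(V)^{\mr{sm}}\), the closedness of the kernel, and the minimality of \(L\).
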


\begin{proof}
This is essentially Lemma \ref{lemma: min endo gen prop}.(\ref{lemma: min endo gen prop 1}), together with the fact that \(\End_F(V)^\mr{sm} \subseteq \End_F(V)\) is finite-dimensional.
\end{proof}

The field from the previous lemma is precisely the minimal endomorphism field of the representation \(\rho\).
A priori, if \(E\) is an extension of \(F\), \(L_{\mr{end}}(\rho\otimes_F E)\) might differ from \(L_{\mr{end}}(\rho)\), but, by Lemma \ref{lemma: min endo gen prop}.(\ref{lemma: min endo gen prop 2}), they coincide.

\begin{definition}[{\cite[Section 1.4]{fitebbasta}}]
We say that \(\rho\) is \emph{strongly absolutely irreducible} if \(\rho_{|G_M} \otimes_F E\) is irreducible for all \(M\) finite extensions of \(K\) and \(E\) finite separable extension of \(F\).
\end{definition}

\begin{lemma}\label{lemma: strongly abs irred min end field}
If \(\rho\) is strongly absolutely irreducible, then \(D_L = D_K=F\).
\end{lemma}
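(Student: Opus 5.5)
The argument is an application of Schur's lemma after base change to \(\overline{F}\), carried out over the minimal endomorphism field. By Lemma \ref{lemma: exist min endo field}, the field \(L = L_{\mr{end}}(\rho)\) is a finite Galois extension of \(K\), and \(D_L = \End_F(\rho_{|G_L})\). Since \(K \subseteq L\), we always have \(F \subseteq D_K \subseteq D_L\), where \(F\) sits inside as scalar endomorphisms. It therefore suffices to show that \(D_L = F\), i.e.\ that \(\dim_F D_L = 1\).

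The plan is as follows.

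\textbf{Step 1 (base change).} Compute \(D_L \otimes_F \overline{F}\). Since taking \(G_L\)-invariants is the kernel of an \(F\)-linear map and \(\overline{F}/F\) is flat, we get
\[
D_L \otimes_F \overline{F} = \End_{\overline{F}}(\rho_{|G_L} \otimes_F \overline{F}),
\]
exactly as in the proof of Lemma \ref{lemma: min endo gen prop}.(\ref{lemma: min endo gen prop 2}).

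\textbf{Step 2 (irreducibility over \(\overline{F}\)).} Show that \(\rho_{|G_L} \otimes_F \overline{F}\) is irreducible. Suppose \(W \subseteq V \otimes_F \overline{F}\) is a nonzero proper \(G_L\)-stable subspace. Choose a basis of \(W\) in echelon form with respect to an \(F\)-basis of \(V\). Then \(W\) is defined over the finite extension \(E\) of \(F\) generated by the coordinates of this basis, and so \(\rho_{|G_L} \otimes_F E\) is reducible.

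\textbf{Step 3 (Schur).} By Schur's lemma over the algebraically closed field \(\overline{F}\), the algebra \(\End_{\overline{F}}(\rho_{|G_L}\otimes_F \overline{F})\) equals \(\overline{F}\). Comparing dimensions with Step 1 gives \(\dim_F D_L = 1\), hence \(D_L = F\), and therefore \(D_K = F\) as well.

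The main obstacle is Step 2. The hypothesis only gives irreducibility over finite \emph{separable} extensions \(E\) of \(F\), while the field produced in Step 2 may be inseparable when \(\operatorname{char} F > 0\). In characteristic \(0\), for example when \(F\) is \(\ell\)-adic, this issue disappears. In general, I would avoid it by working with the separable closure \(F^{\mr{sep}}\) instead of \(\overline{F}\). The base-change argument of Step 1 still applies, and Step 2 gives that \(\rho_{|G_L}\otimes_F F^{\mr{sep}}\) is irreducible, so its endomorphism algebra \(D_L \otimes_F F^{\mr{sep}}\) is a finite-dimensional division algebra over \(F^{\mr{sep}}\). Every finite-dimensional division algebra over a separably closed field is commutative and purely inseparable over it. To close the gap, I would then base change further to \(\overline{F}\) and use that a finite-dimensional division algebra becoming commutative and split would contradict semisimplicity unless its dimension is \(1\). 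Alternatively, it should suffice to note that the paper only uses fields \(F\) of characteristic \(0\).
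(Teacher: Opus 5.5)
Your Steps 1--3 are correct whenever every finite extension of \(F\) is separable. This includes characteristic \(0\), which covers the only use of the lemma in the paper, \(F=F_{f,\lambda}\). In that setting your argument is the paper's: the paper notes that \(\rho_{|G_M}\) is absolutely irreducible and applies Schur's lemma to get \(D_M=F\) for every finite \(M/K\). You also justify a step the paper leaves implicit, the passage from irreducibility over finite extensions to irreducibility over \(\overline F\), by descending a \(G_L\)-stable subspace through its reduced echelon basis.

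Your patch for characteristic \(p\), however, does not work. The reduction to ``\(D_L\) is a purely inseparable field extension of \(F\)'' is fine. But \(D_L\otimes_F\overline F=\End_{\overline F}(\rho_{|G_L}\otimes_F\overline F)\) has no reason to be semisimple, because \(\rho_{|G_L}\otimes_F\overline F\) need not be semisimple. So a non-reduced algebra there gives no contradiction.

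In fact the statement fails in that generality. Take \(F'=F(a^{1/p})\) with \(a\in F\setminus F^p\). Let \(\rho\) be \(F'\), viewed as a \(p\)-dimensional \(F\)-space, with \(G_K\) acting by multiplication through a character \(\chi\colon G_K\to (F')^\times\) such that \(\chi(H)\not\subseteq F\) for every open \(H\le G_K\). Such \(\chi\) exist for suitable perfect \(K\).

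For every finite separable \(E/F\), the space \(V\otimes_F E\) is the field \(F'E\), of degree \(p\) over \(E\). It is generated over \(E\) by any element of \(\chi(H)\setminus F\). Hence every \(\chi(H)\)-stable \(E\)-subspace is \(0\) or everything, and \(\rho\) is strongly absolutely irreducible in the paper's sense. Yet \(F'\subseteq D_K\).

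So the right fix is your second option: restrict to \(F\) of characteristic \(0\), or more generally perfect \(F\). This is also the tacit hypothesis behind the paper's one-line proof.
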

\begin{proof}
This follows from Schur's lemma: since \(\rho_{|G_M}\) is absolutely irreducible, 
\(D_M=F\) for all finite extensions \(M\) of \(K\). 
\end{proof}

\subsection{Abelian varieties}\label{ssec: min endo for abvars}
For this subsection, suppose that \(K\) is a number field. Let \(A\) be an abelian variety over \(K\).

We can consider \(\AbVar_{\overline{K}}^0\) the category of abelian varieties over \(\overline{K}\) with morphisms given by morphisms of abelian varieties up to isogeny. Then \(\AbVar_{\overline{K}}^0\) is endowed with a natural categorical action \(T\) of \(G_K\) where \(T(\sigma)(X) = (\sigma^{-1})^\ast(X)\), for \(X\) an abelian variety over \(\overline{K}\). In this setting, by \cite{sga1, weildescent}, any descent datum is \emph{effective} in the sense that giving a descent datum on \(X\) is the same as giving an abelian variety \(X_0\) over \(K\) with a fixed isomorphism \(X_{0, \overline{K}} \cong X\). Thus, we may identify \(A\) with a descent datum on \(A_{\overline{K}}\). Then, the minimal endomorphism field, as defined above, coincides with the \emph{endomorphism field} from \cite{guralnickkedlaya}. This is a Galois extension \(L/K\), finite by Lemma \ref{lemma: min endo gen prop}.(\ref{lemma: min endo gen prop 1}), such that \(\End(A_{\overline{K}})=\End(A_{L})\) and \(\End(A_M)=\End(A_{L})\) for all finite field extensions \(M/L\). If we want to stress the dependence on $A$, we write $L_\mr{end}(A)$ for $L$. Like in Definition \ref{def: min endo field}, we see that \(G_{L}\) is the kernel of the action of \(G_K\) on \(\End^0(A_{\overline{K}})\).

By Faltings's theorem \cite{faltings}, if \(B\) is another abelian variety over \(K\),
\[
    \hom^0(A, B)\otimes_\Q \Ql \cong \hom_{\Ql[G_K]}(V_\ell(A), V_\ell(B)).
\]
In particular, for all \(A\), we see that the \(\ell\)-adic representation \(V_\ell(A)\) of \(G_K\) is semisimple and \(\End^0(A_{M}) \otimes_{\Q} \Ql \cong \End_{\Ql[G_M]}(V_\ell(A))\), for any \(M\) finite extension of \(K\).

\subsubsection{Extra endomorphisms}\label{sssec: extra end}
Take \(E\) any commutative subfield of \(\End^0(A_{K})\) central in \(\End^0(A_{\overline{K}})\). The action of \(G_K\) on \(\End^0(A_{\overline{K}})\) is \(E\)-linear and the minimal endomorphism field \(L_{\mr{end}}(A)\) does not change if we consider \(\End^0(A_{M})\) as an \(E\)-algebra for all \(M/K\) finite.
Moreover, even if \(E\) is strictly larger than \(\Q\), the isomorphism \(\End^0(A_{M}) \otimes_{\Q} \Ql \cong \End_{\Ql[G_M]}(V_\ell(A))\) is one of \(E \otimes_\Q \Ql\)-algebras, for all \(M\) as above. In fact, if we write \(E \otimes_\Q \Ql \cong \prod_{\lambda \mid \ell} E_\lambda\), where \(E_\lambda\) is the completion of \(E\) at the prime \(\lambda\), then we also have isomorphisms \(\End^0(A_{M}) \otimes_{E} E_{\lambda} \cong \End_{E_{\lambda}[G_M]}(V_\lambda(A))\), for the usual meaning of \(V_\lambda(A)\).

\subsubsection{Minimal fields}
Let \(E\) be a number field. Consider \(\AbVar_{\overline{K}}^E\) the category of abelian varieties over \(\overline{K}\) with the extra condition that \(E\) is a commutative subfield central in \(\End^0(A_{\overline{K}})\) and morphisms given by morphisms up to isogeny that commute with the \(E\)-algebra structures. Notice that the definition of descent datum applied to this category, which is \(E\)-linear, implies that \(E\) is fixed by the action of \(G_K\). Let \(A\) be abelian variety over \(K\) such that \(E\) is a commutative subfield of \(\End^0(A_{K})\) central in \(\End^0(A_{\overline{K}})\). Up to isomorphism, this is the same as giving an object of \(\AbVar_{\overline{K}}^E\) with a descent datum. Let \(\lambda \mid \ell\) be a prime of \(E\) and \(\rho_{A, \lambda}\) the usual representation on \(V_\lambda(A)\).
\begin{lemma}\label{lemma: min endo of ab is repth}
We have \(L_{\mr{end}}(A) = L_{\mr{end}}(\rho_{A, \lambda})\).
\end{lemma}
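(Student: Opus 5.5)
We reduce the statement to a comparison of invariants. The key input is the $E_\lambda$-linear form of Faltings's theorem recorded in \S\ref{sssec: extra end}: for every finite extension $M/K$,
\[
\End^0(A_M)\otimes_E E_\lambda \cong \End_{E_\lambda[G_M]}(V_\lambda(A)) = D_M(\rho_{A,\lambda}).
\]
By Definition \ref{def: min endo field} and Lemma \ref{lemma: exist min endo field}, each minimal endomorphism field is the fixed field of the kernel of the $G_K$-action on the corresponding smooth endomorphism algebra. So it suffices to show that, for $\sigma\in G_K$, $\sigma$ acts trivially on $\End^0(A_{\overline K})$ if and only if it acts trivially on $\End_{E_\lambda}(V_\lambda(A))^{\mr{sm}}=\bigcup_M D_M(\rho_{A,\lambda})$.

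First we check that the Faltings isomorphism is $G_K$-equivariant. The action on $\End^0(A_{\overline K})$ is $\psi\mapsto\sigma\psi\sigma^{-1}$ on points. The action on $\End_{E_\lambda}(V_\lambda(A))$ is conjugation by $\rho_{A,\lambda}(\sigma)$. The map $\End^0(A_{\overline K})\otimes_E E_\lambda\to\End_{E_\lambda}(V_\lambda(A))$ sends $\psi$ to its action on torsion points, and this map intertwines the two actions tautologically. It is injective: a standard fact, which also follows from the displayed isomorphism applied with $M$ large enough that $\End^0(A_M)=\End^0(A_{\overline K})$, possible by Lemma \ref{lemma: min endo gen prop}.(\ref{lemma: min endo gen prop 1}).

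Next we identify images. Taking the union over $M$ of the displayed isomorphisms gives
\[
\End^0(A_{\overline K})\otimes_E E_\lambda \cong \End_{E_\lambda}(V_\lambda(A))^{\mr{sm}},
\]
a $G_K$-equivariant isomorphism. Hence the kernel of the action on the right equals the kernel of the action on $\End^0(A_{\overline K})\otimes_E E_\lambda$. This in turn equals the kernel on $\End^0(A_{\overline K})$, since $E_\lambda$ is flat over $E$ and the action is $E$-linear, so $(W\otimes_E E_\lambda)^{H}=W^H\otimes_E E_\lambda$ for any subgroup $H$. This is the same argument as in Lemma \ref{lemma: min endo gen prop}.(\ref{lemma: min endo gen prop 2}). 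Taking fixed fields gives $L_{\mr{end}}(A)=L_{\mr{end}}(\rho_{A,\lambda})$.

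The main obstacle is the equivariance and compatibility of the Faltings isomorphisms for varying $M$, that is, that they are all restrictions of one $G_K$-equivariant map. We handle this by defining the map once on $\End^0(A_{\overline K})$ via the action on $V_\lambda(A)$, rather than invoking Faltings separately for each $M$. One could alternatively phrase the argument as an instance of Lemma \ref{lemma: min endo gen prop}.(\ref{lemma: min endo gen prop 3}), but the realisation $V_\lambda$ is not fully faithful over $\overline K$. The direct comparison of invariants therefore seems cleaner.
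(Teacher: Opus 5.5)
Your proof is correct. It uses the same ingredients as the paper: Faltings's theorem over each finite $M/K$, and the flat base-change argument of Lemma~\ref{lemma: min endo gen prop}.(\ref{lemma: min endo gen prop 2}). The packaging is different, though.

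The paper settles the statement in one line by invoking Lemma~\ref{lemma: min endo gen prop}.(\ref{lemma: min endo gen prop 3}), asserting that $V_\lambda$ is a fully faithful realisation. You are right that this is not literally true. Over $\overline{K}$, without descent data, the map $\hom^0(X,Y)\otimes_E E_\lambda\to\hom_{E_\lambda}(V_\lambda(X),V_\lambda(Y))$ is injective but not surjective. For instance, for a non-CM elliptic curve $C$, $\End^0(C_{\overline K})\otimes\Q_\ell$ is $1$-dimensional, while $\End_{\Q_\ell}(V_\ell(C))$ is $4$-dimensional. So the proof of that lemma, which uses an isomorphism $\End_{\Ccal}(X)\cong\End_{\Dcal}(R(X))$, does not apply verbatim.

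What the argument actually needs is weaker: a $G_K$-equivariant map that is bijective on $G_M$-invariants for every finite $M$. This alone forces a bijection on smooth vectors, so your global injectivity check is harmless but not needed. This is exactly what you establish, by defining the map once through the action on $V_\lambda(A)$ and applying Faltings for each $M$.

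Your direct comparison of invariants is therefore the rigorous form of the paper's argument. The paper's formal route gains brevity and a reusable general statement. Its cost is that the hypothesis ``fully faithful'' should be weakened to ``equivariant and fully faithful on $G_M$-invariants for all finite $M$''.
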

\begin{proof}
This follows from Faltings's theorem and Lemma \ref{lemma: min endo gen prop}.(\ref{lemma: min endo gen prop 2}), (\ref{lemma: min endo gen prop 3}), because \(V_\lambda\) is a fully faithful realisation from \(T \otimes_E E_\lambda\), the categorical action of \(G_K\) on \(\AbVar_{\overline{K}}^E\), to \(S\), the categorical action on representations. 
\end{proof}

\section{Twists}
Let \(\Ccal\) be an \(F\)-linear abelian category with a categorical action \(T\) of \(G_K\). We consider \((X, \varphi)\) an object of \(\Ccal\) with a fixed descent datum \(\varphi\). We write \(D_M\) instead of \(\End_{\Ccal}(X)^{G_M}\), for \(M\) a finite extension of \(K\), and \(L\) for the minimal endomorphism field of \((X, \varphi)\). In particular, \(\bD = \End_{\Ccal}(X)^\mr{sm}\).
\begin{definition}\label{def: general twist}
Let \((Y, \varphi')\) and \((Z, \varphi'')\) be objects of \(\Ccal\) with descent data.
\begin{enumerate}
\item We say that \((Y, \varphi')\) is a \emph{twist} of \((X, \varphi)\) if there is some \(M\) finite extension of \(K\) such that \((X, \varphi)\) is equivalent to \((Y, \varphi')\) over \(M\), as descent data. If this equivalence is given by an isomorphism \(f \in \mr{Isom}_{\Ccal}(X, Y)^\mr{sm}\), then we say that \((Y, \varphi')\) is a twist of \((X, \varphi)\) via \(f\).

\item If \((Z, \varphi'')\) is another twist of \((X, \varphi)\), we say that \((Y, \varphi')\) and \((Z, \varphi'')\) are \emph{equivalent} as twists of \((X, \varphi)\) if \((Y, \varphi')\) and \((Z, \varphi'')\) are equivalent as descent data over \(K\).

\item When \(K\) is a number field and \(v\) a place of \(K\), we say that \((Y, \varphi')\) is a \emph{\(G_{K_v}\)-twist}, or a \emph{local twist at \(v\)}, of \((X, \varphi)\) if it is a twist with respect to the categorical action of \(G_{K_v}\), obtained from \(T\) by restriction along \(\Bbf G_{K_v} \to \Bbf G_K\).

\item We write \(\mathcal{E}(X, \varphi)\) for the set of equivalence classes of twists of \((X, \varphi)\).
\end{enumerate}
\end{definition}

\begin{proposition}\label{proposition: twists and cohomology}
There is a natural bijection between \(\mathcal{E}(X, \varphi)\) and \(H^1(G_K, \bD^\times)\).
\end{proposition}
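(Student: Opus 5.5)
We follow the classical argument of Serre for twists of quasi-projective varieties, adapted to descent data.

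\emph{From twists to cocycles.} Let \((Y, \varphi')\) be a twist of \((X, \varphi)\) via \(f \in \mr{Isom}_{\Ccal}(X, Y)^{G_M}\), where \(M\) is a finite extension of \(K\). For \(\sigma \in G_K\), set
\[
    c_\sigma \coloneqq f^{-1} \circ \sigma(f) \in \Aut_{\Ccal}(X),
\]
where \(\sigma(f) = \varphi'_\sigma \circ T(\sigma)(f) \circ \varphi_\sigma^{-1}\) is the action on \(\hom_{\Ccal}(X, Y)\) from Section \ref{sec: galois descent}. We then check three things.
\begin{enumerate}
    \item \(c_\sigma\) lies in \(\bD^\times\). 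Indeed, \(f\) and \(\sigma(f)\) are smooth (\(\sigma(f)\) is fixed by \(\sigma G_M \sigma^{-1}\)), compositions of smooth morphisms are smooth, and \(c_\sigma^{-1} = \sigma(f)^{-1}\circ f\) is smooth as well.
    \item \(\sigma \mapsto c_\sigma\) is continuous, because \(c_\sigma = 1\) for \(\sigma \in G_M\).
    \item The cocycle identity \(c_{\sigma\tau} = c_\sigma\, \sigma(c_\tau)\) holds. It follows from \(\sigma(f \circ g) = \sigma(f)\circ\sigma(g)\), i.e.\ compatibility of the action with composition, which in turn uses functoriality of \(T(\sigma)\) and the cocycle condition on the descent data \(\varphi, \varphi'\).
\end{enumerate}
Replacing \(f\) by another smooth isomorphism \(f' = f\circ a\), with \(a \in \bD^\times\), changes \(c\) to the cohomologous cocycle \(a^{-1} c_\sigma \sigma(a)\). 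Replacing \((Y, \varphi')\) by an equivalent twist \((Z, \varphi'')\) via \(h \in \mr{Isom}(Y, Z)^{G_K}\) and using \(h\circ f\) gives the same cocycle, since \(\sigma(h) = h\). This yields a well-defined map \(\mathcal{E}(X, \varphi) \to H^1(G_K, \bD^\times)\).

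\emph{Injectivity.} If \((Y, \varphi'; f)\) and \((Z, \varphi''; g)\) give cohomologous cocycles, adjust \(g\) by some \(a \in \bD^\times\) so that the cocycles are equal. Then \(h \coloneqq g \circ f^{-1}\) satisfies \(\sigma(h) = \sigma(g)\sigma(f)^{-1} = g c_\sigma c_\sigma^{-1} f^{-1} = h\). Hence \(h\) is a \(G_K\)-invariant isomorphism, so the two twists are equivalent.

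\emph{Surjectivity.} This is the step where effectivity is usually the issue, but here it is free, since twists are just descent data on objects of \(\Ccal\). Given a continuous cocycle \(c\), define a new descent datum on \(X\) itself by
\[
    \varphi^c_\sigma \coloneqq \varphi_\sigma \circ T(\sigma)(c_\sigma^{-1}) \colon T(\sigma)(X) \to X.
\]
One must choose the order of composition and the inversion so that \(\mr{id}_X \colon (X, \varphi^c) \to (X, \varphi)\) has action \(\sigma(\mr{id}) = c_\sigma\) (or \(c_\sigma^{-1}\)), adjusting conventions accordingly. The main computational check is that \(\varphi^c\) satisfies the descent-datum diagram with \(\alpha_{\sigma,\tau}\). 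This reduces, using naturality of \(\alpha_{\sigma,\tau}\) and the descent condition for \(\varphi\), exactly to the cocycle identity rewritten via \(\sigma(c_\tau) = \varphi_\sigma \circ T(\sigma)(c_\tau)\circ \varphi_\sigma^{-1}\). The normalisation \(\varphi^c_{1} = \mr{Id}_T(X)\) follows from \(c_1 = 1\).

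Since \(c\) is trivial on some open \(G_M\), the identity map is \(G_M\)-invariant. Thus \((X, \varphi^c)\) is a twist of \((X, \varphi)\) via the identity, and its associated cocycle is \(c\). I expect this verification of the descent condition for \(\varphi^c\) to be the only genuinely fiddly point. Everything else is formal bookkeeping with the compatibility of the \(G_K\)-action with composition.
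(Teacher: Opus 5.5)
Your strategy is the paper's: the cocycle \(\sigma\mapsto f^{-1}\sigma(f)\), the same well-definedness checks, and an inverse obtained by twisting the descent datum on \(X\) itself. Proving injectivity directly instead of via \(\xi\circ\theta=\mr{id}\) is an inessential difference. The problem is the one formula you actually commit to, at exactly the step you flagged as fiddly.

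Since \(\varphi_\sigma\circ T(\sigma)(a)=\sigma(a)\circ\varphi_\sigma\) for \(a\in\End_{\Ccal}(X)\), your datum is \(\varphi^c_\sigma=\varphi_\sigma\circ T(\sigma)(c_\sigma^{-1})=\sigma(c_\sigma)^{-1}\circ\varphi_\sigma\). A family \(d_\sigma\circ\varphi_\sigma\) is a descent datum if and only if \(d_{\sigma\tau}=d_\sigma\,\sigma(d_\tau)\). The map \(\sigma\mapsto\sigma(c_\sigma)^{-1}\) does not satisfy this in general. Already when \(G_K\) acts trivially on \(\bD\) (e.g.\ \(L=K\)), it is \(\sigma\mapsto c_\sigma^{-1}\), an anti-homomorphism, which fails as soon as \(c\) has non-abelian image. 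So the verification you describe does not reduce to the cocycle identity; it fails.

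The hedge does not rescue it. The map \(\sigma\mapsto c_\sigma^{-1}\) is not a cocycle either. Also, \(\mr{id}_X\colon (X,\varphi^c)\to(X,\varphi)\) points the wrong way, since in Definition \ref{def: general twist} the isomorphism goes from \(X\) to the twist.

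The correct choice is \(\varphi^c_\sigma\coloneqq c_\sigma\circ\varphi_\sigma\), which is the paper's \((c\varphi)_\sigma\). Then
\[
\varphi^c_\sigma\circ T(\sigma)(\varphi^c_\tau)=c_\sigma\,\sigma(c_\tau)\,\varphi_\sigma\circ T(\sigma)(\varphi_\tau)=c_\sigma\,\sigma(c_\tau)\,\varphi_{\sigma\tau}\circ\alpha_{\sigma,\tau}(X),
\]
so the descent condition is literally the cocycle identity, with no naturality of \(\alpha\) needed. Moreover \(\varphi^c_1=\mr{Id}_T(X)\) because \(c_1=1\). Finally, \(\mr{id}_X\colon(X,\varphi)\to(X,\varphi^c)\) satisfies \(\sigma(\mr{id}_X)=\varphi^c_\sigma\circ\varphi_\sigma^{-1}=c_\sigma\). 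Hence \(\mr{id}_X\) is \(G_M\)-invariant when \(c_{|G_M}=1\), and the attached cocycle is exactly \(c\). With this correction your argument is complete and coincides with the paper's.
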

\begin{proof}
First, we show that there is a well-defined map \(\theta \colon \mathcal{E}(X, \varphi) \to H^1(G_K, \bD^\times).\)
Let \((Y, \varphi')\) be a twist of \((X, \varphi)\) over \(M\), via the isomorphism \(f \colon X \to Y \in \hom_{\Ccal}(X, Y)^{G_M}\). Consider the map
\begin{align*}
    c_f \colon G_K &\longrightarrow \bD^\times, \\
    \sigma &\longmapsto f^{-1} \sigma(f) = f^{-1}\varphi'_{\sigma}T(\sigma)(f)\varphi_{\sigma}^{-1}.
\end{align*}
One can check that \(c_f\) is a continuous one-cocycle. Indeed, \(c_f\) is a one-cocycle, because, for \(\sigma, \tau \in G_K\),
\[
    c_f(\sigma\tau) = f^{-1}((\sigma \tau)(f)) = f^{-1}(\sigma(ff^{-1} \tau(f))) = c_f(\sigma)\sigma(c_f(\tau)).
\]
To see that \(c_f(\sigma) \in \bD^\times \subseteq \Aut_{\Ccal}(X)\), consider \(\tau \in N\), with \(N\) the normal core of \(G_M\) (which is open and normal in \(G_K\)). Then
\[
    \tau(f^{-1} (\sigma(f))) = f^{-1} (\sigma \sigma^{-1} \tau \sigma)(f) = f^{-1} \sigma(f).
\]
If \(\tau \in N\), then \(c_f(\tau) = \mr{id}_X\). This shows that \(c_f\) is continuous. 

Suppose that \((Y, \varphi')\) is also a twist of \((X, \varphi)\) over \(M'\) via \(f'\in \hom_{\Ccal}(X, Y)^{G_{M'}}\). Then
\[
    (f')^{-1}fc_f(\sigma) = (f')^{-1} \sigma(f) = (f')^{-1} \sigma (f'(f')^{-1}f) = c_{f'}(\sigma) \sigma((f')^{-1}f).
\]
This shows that the class \([c_f] = [c_{f'}] \in H^1(G_K, \bD^\times)\) does not depend on \(f\).

Suppose that \((Z, \varphi'')\) is another twist of \((X, \varphi)\) and that \((Y, \varphi')\) is equivalent, as twist, to \((Z, \varphi'')\) via \(g \in \mr{Isom}_{\Ccal}(Y, Z)^{G_K}\). Then \(c_{gf}(\sigma) = c_f(\sigma)\). In particular, the class \([c_f]\) only depends on the equivalence class of the twist \((Y, \varphi')\). Thus, we can define \(\theta\) by sending the class of \((Y, \varphi')\) to the class \([c_f] \in H^1(G_K, \bD^\times)\).

We now construct an inverse \(\xi\) of \(\theta\). Suppose that \(\chi = [c] \in H^1(G_K, \bD^\times)\). Then one can check that \((c\varphi)_{\sigma} \coloneqq c(\sigma) \varphi_{\sigma}\) defines a descent datum on \(X\). By continuity of \(c\), there is an open normal subgroup \(N\) of \(G_K\) such that, for all \(\tau \in N\), \(c(\tau) = \mr{id}_X.\)
One can check that \((X, c \varphi)\) is equivalent to \((X, \varphi)\) via \(\mr{id}_X\) over \(\overline{K}^N\), that is, \((X, c \varphi)\) is a twist of \((X, \varphi)\).
If \(c'\) is co-homologous to \(c\), then there is some \(b \in \bD^\times\) such that \(c'(\sigma) = b^{-1} c(\sigma) \sigma(b)\), for all \(\sigma \in G_K\), thus
\(b c'(\sigma) \varphi_\sigma = c(\sigma) \varphi_\sigma T(\sigma)(b)\), which implies that \(c'\varphi\) and \(c\varphi\) are equivalent as twists of \((X, \varphi)\). Hence, the equivalence class of \((X, c\varphi)\) in \(\mathcal{E}(X, \varphi)\) only depends on \(\chi\). This defines a map \(\xi \colon H^1(G_K, \bD^\times) \to \mathcal{E}(X, \varphi).\)
Notice that, if \((Y, \varphi')\) is a twist of \((X, \varphi)\) via \(f\), then \((X, c_f\varphi)\) is a twist of \((X, \varphi)\) via \(\mr{id}_X\), which is equivalent to \((Y, \varphi')\), as twists, via \(f\). This means that \(\xi\theta\) is the identity.
Moreover, \(c(\sigma)\varphi_{\sigma}T(\sigma)(\mr{id}_X)\varphi_{\sigma}^{-1} = c(\sigma)\) shows that \(\theta\xi\) is also the identity.
\end{proof}

Let \(m \ge 1\) be an integer. From now on, we assume that \(F\) has characteristic zero and that the following holds.
\begin{assumption}\label{ass1}
We assume that \(\bD\) is finite-dimensional over \(F\), that it admits a structure of \(F(\zeta_m)\)-algebra and we fix one such structure. Moreover, we assume that \(\mu_m \subseteq \bD^\times\) is stable under the action of \(G_K\).
\end{assumption}

Thanks to Assumption \ref{ass1}, we can set
\[
    \Twist_m(G_K, \bD^\times) \coloneqq \im(H^1(G_K, \mu_m) \to H^1(G_K, \bD^\times)).
\]

\begin{assumption}\label{ass: number field}
From now on, we assume that \(K\) is a number field.
\end{assumption}

\begin{definition}\label{def-local-global-I}
Let \((X, \varphi), (Y, \varphi')\) be two objects with descent data.
\begin{enumerate}
\item We say that \((X, \varphi)\) and \((Y, \varphi')\) are \emph{(globally) \(m\)-atic twists} if there exists \(\chi \in \Twist_m(G_K, \bD^\times)\) such that \((Y, \varphi')\) is a twist of \((X, \varphi)\) in the class associated with \(\chi\) by Proposition \ref{proposition: twists and cohomology}. 
\item We say that \((X, \varphi)\) and \((Y, \varphi')\) are \emph{locally \(m\)-atic twists at \(v\)} if \((X, \varphi)\) is a twist of \((Y, \varphi')\) and there exists \(\chi_v \in \Twist_{m}(G_{K_v}, \bD^\times)\) such that the class of \((Y, \varphi')\) as a \(G_{K_v}\)-twist of \((X, \varphi)\) is associated with \(\chi_v\). 
\item We say that \((X, \varphi)\) and \((Y, \varphi')\) are \emph{locally \(m\)-atic twists} if \((Y, \varphi')\) is a twist of \((X, \varphi)\) and they are locally \(m\)-atic twists at \(v\) for all finite places \(v\) of \(K\).
\item We say that \((X, \varphi, m)\) satisfies the \emph{local-global principle} if all locally \(m\)-atic twists are \(m\)-atic twists.
\end{enumerate}
\end{definition}

\subsection{Galois representations}\label{sec: twist of a rep}
Consider a representation \(\rho: G_K \to \GL_F(V)\), with \(F\) any field, \(V\) as above. 
\begin{definition}
We say that \(\rho' \colon G_K \to \GL_F(V)\) is an \emph{(algebraic) twist} of \(\rho\), or that \(\rho\) and \(\rho'\) are \emph{potentially equivalent}, if there exists \(M\), finite extension of \(K\), such that \(\rho'_{|G_M} \sim \rho_{|G_M}\). Two algebraic twists \(\rho'\), \(\rho''\) of \(\rho\) are said to be \emph{equivalent} if \(\rho' \sim \rho''\) over \(G_K\).
\end{definition}

With the setup of \S\ref{ssec: cat action on reps}, one can see that these notions of twist and equivalence are compatible with the general ones given in Definition \ref{def: general twist}. In particular, instead of \(\End_{\vect_F}(V)^\mr{sm}\) (with respect to descent datum entailed by \(\rho\)), we will simply write \(\bD\), or \(\bD(\rho)\). We write that \((\rho, m)\) satisfies the local-global principle if the corresponding object with descent datum \((V, \varphi, m)\) does.
If \(\chi = [c] \in H^1(G_K, D_L^\times)\), we denote by \(\rho^\chi(\sigma) \coloneqq c(\sigma)\rho(\sigma)\), for \(\sigma \in G_K,\) the twist of \(\rho\) by \(\chi\).

\subsection{Abelian varieties}\label{sec: twist of an abvar}
Let \(A, B\) be abelian varieties over \(K\).
\begin{definition}
We say that \(B\) is a \emph{twist} of \(A\) if \(A_{\overline{K}}\) is isogenous to \(B_{\overline{K}}\).
\end{definition}
This notion of twist recovers that of Definition \ref{def: general twist} with the setup of \S\ref{ssec: min endo for abvars}. We write that \((A, m)\) satisfies the local-global principle if the corresponding object with descent datum \((A_{\overline{K}}, \varphi, m)\) does.
If \(\chi\in H^1(G_K, D_L^\times)\), then we denote by \(A^\chi\) the twist of \(A\) by \(\chi\).

\section{Tate--Shafarevich sets} \label{sec: def sha}
Let \((X, \varphi)\) be an object with a fixed descent datum in an \(F\)-linear abelian category \(\Ccal\) with a categorical action \(T\) satisfying Assumption \ref{ass1}. Let \(L\) denote the minimal endomorphism field of \((X, \varphi)\) and write \(D_L = \End_{\Ccal}(X)^{\mr{sm}} = \End_{\Ccal}(X)^{G_L}\). 
We have the exact sequence
\begin{equation}\label{tautological}
1 \longrightarrow \mu_m\longrightarrow \bD^\times\longrightarrow \bD^\times/\mu_m\longrightarrow 1\end{equation} 
Taking Galois cohomology, and using that \(\mu_m\) is normal in \(\bD^\times\), we obtain a commutative diagram of pointed sets with exact rows (\cite[Chapitre VII, Annexe, Proposition 2]{serre-locaux}, \cite[\S5]{Serre-Galois}): 
{\footnotesize {\[
\xymatrix{
1 \ar[r] & \mu_m^{G_K}\ar[r]\ar[d] &D^\times \ar[r]\ar[d] & \left(\bD^\times/\mu_m\right)^{G_K} \ar[r]^-{\nu}\ar[d] & \\
1\ar[r] & \prod_v\mu_m^{G_{K_v}}\ar[r] &\prod_v \bD^{\times,G_{K_v}} \ar[r] & \prod_v \left(\bD^\times/\mu_m\right)^{G_{K_v}}\ar[r] & 
}
\]}}
{\footnotesize {
\begin{equation}\label{dia1}
\xymatrix{
\ar[r]^-{\nu}& H^1(G_K,\mu_m) \ar[r]\ar[d]^-{\lambda}\ar@{-->}[rd]^{\vartheta} & H^1(G_K,\bD^\times) \ar[r]\ar[d]^-{\mu}\ar@{-->}[dr]^-{\psi} & 
H^1(G_K, \bD^\times/\mu_m) \ar[d]^-{\varepsilon}\ar[r]^-\delta&H^2(G_K,\mu_m)\ar[d]\\
\ar[r]& \prod_v H^1(G_{K_v},\mu_m) \ar[r] & \prod_v H^1(G_{K_v},\bD^\times) \ar[r] & \prod_v H^1(G_{K_v}, \bD^\times/\mu_m)\ar[r]&\prod_vH^2(G_{K_v},\mu_m)}
\end{equation}}}
where the vertical maps are products of the canonical restriction maps. 
\begin{definition}\label{def : selsha}
\
\begin{enumerate}
    \item The \emph{\(m\)-Selmer (pointed) set} of \(\rho\) is \(\Sel_m(G_K, \bD^\times)=\ker( \psi)\). 
     \item The \emph{\(m\)-Tate--Shafarevich (pointed) set} of \(\rho\) is \(\Sha_m(G_K, \bD^\times)=\ker( \varepsilon)\cap\ker(\delta)\). 
\end{enumerate} 
\end{definition}
It follows immediately from the definitions that there is an exact sequence 
\begin{equation}\label{FES}
 1\longrightarrow \Twist_m(G_K, \bD^\times)\longrightarrow
\Sel_m(G_K, \bD^\times)\longrightarrow \Sha_m(G_K, \bD^\times)\longrightarrow 1.\end{equation}

In particular, we have the following. 

\begin{lemma}
The triple \((X, \varphi, m)\) satisfies the local-global principle if and only if \(\Sha_m(G_K, \bD^\times)\) is reduced to a point.
\end{lemma}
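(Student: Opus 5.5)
The plan is to use the exact sequence \eqref{FES} together with the dictionary of Proposition \ref{proposition: twists and cohomology}, which identifies twists of \((X,\varphi)\) with classes in \(H^1(G_K,\bD^\times)\). First I will identify the sets involved. Locally \(m\)-atic twists correspond exactly to classes \(c\in H^1(G_K,\bD^\times)\) such that, for every finite \(v\), the restriction \(\mu(c)_v\) lies in \(\Twist_m(G_{K_v},\bD^\times)\). By exactness of the bottom row of \eqref{dia1} at \(H^1(G_{K_v},\bD^\times)\), this is the same as \(\mu(c)_v\) mapping to the trivial class in \(H^1(G_{K_v},\bD^\times/\mu_m)\). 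In other words, the locally \(m\)-atic twists are precisely the elements of \(\ker(\psi)=\Sel_m(G_K,\bD^\times)\). Globally \(m\)-atic twists correspond, by definition, to \(\Twist_m(G_K,\bD^\times)\subseteq\Sel_m\). So the local-global principle is equivalent to the equality \(\Twist_m=\Sel_m\). One caveat on places: \(\psi\) ranges over all \(v\), whereas the definition only constrains finite places. I will state the convention that the products in \eqref{dia1} run over finite places, as the definition requires, so that \(\ker(\psi)\) matches.

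Next I will make the maps in \eqref{FES} explicit. The map \(\Sel_m\to\Sha_m\) is the restriction of \(H^1(G_K,\bD^\times)\to H^1(G_K,\bD^\times/\mu_m)\). Its image lies in \(\ker\varepsilon\) by commutativity of the right square, and in \(\ker\delta\) by exactness of the top row. It is surjective: given \(x\in\ker\varepsilon\cap\ker\delta\), exactness at \(H^1(G_K,\bD^\times/\mu_m)\) produces a lift \(c\in H^1(G_K,\bD^\times)\), and then \(\psi(c)=\varepsilon(x)\) is trivial, so \(c\in\Sel_m\). The fibre of this map over the base point is, by exactness of the top row, \(\im(H^1(G_K,\mu_m))=\Twist_m\). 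This verifies \eqref{FES} as an exact sequence of pointed sets.

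To conclude, suppose first that \(\Sha_m\) is a point. Then every element of \(\Sel_m\) lies in the kernel of the surjection \(\Sel_m\to\Sha_m\), so \(\Sel_m=\Twist_m\) and the principle holds. Conversely, suppose the principle holds, so \(\Sel_m=\Twist_m\). Every element of \(\Sha_m\) is the image of some element of \(\Sel_m=\Twist_m\), and these all map to the base point, so \(\Sha_m\) is trivial.

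The main obstacle is the non-abelian subtlety in the surjectivity argument. Exactness for pointed sets only controls the fibre over the base point, but that is all the argument uses. I will still double-check, using Serre's exact sequence for a normal subgroup \(\mu_m\trianglelefteq\bD^\times\), that \(\delta\) is defined and that exactness holds at \(H^1(G_K,\bD^\times/\mu_m)\). Here \(\mu_m\) need not be central. It is central when \(\bD\) is an \(F(\zeta_m)\)-algebra with \(\mu_m\subseteq F(\zeta_m)\) central, and in that case \(\delta\) lands in \(H^2(G_K,\mu_m)\) and the sequence is exact.
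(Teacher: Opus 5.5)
Your proposal is correct and follows the paper's route. The paper reads the lemma off directly from the exact sequence \eqref{FES}, which it says ``follows immediately from the definitions.'' Your argument makes that step explicit in three moves: you identify locally \(m\)-atic twists with \(\Sel_m(G_K,\bD^\times)=\ker(\psi)\) and globally \(m\)-atic ones with \(\Twist_m(G_K,\bD^\times)\); you verify exactness of \eqref{FES} from the rows of \eqref{dia1}; and you observe that \(\Sha_m\) is a point exactly when \(\Sel_m=\Twist_m\). Your remarks on taking the products over finite places only, and on the centrality of \(\mu_m\) (which the \(F(\zeta_m)\)-algebra structure of Assumption \ref{ass1} guarantees), are sound clarifications of points the paper leaves implicit.
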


\section{Results on the local-global principle}

Let \((X, \varphi)\) be as in \S\ref{sec: def sha}. 

\subsection{Reduction to finite Galois extensions}
Write \(G=\Gal(L/K)\) and, for \(v\) a finite place of \(K\), \(D_v \le G_K\) for the image of \(G_{K_v}\) inside \(G\), that is, the decomposition group of \(G\) at \(v\). Notice that, by Assumption \ref{ass1}, \(\bD\) has finite dimension over \(F\), so that \(L\) is finite over \(K\), by Lemma \ref{lemma: min endo gen prop}.(\ref{lemma: min endo gen prop 1}). In particular, \(G\) is finite.

Consider the commutative diagram (\cite[Chapter I, \S5.8, a)]{Serre-Galois}):
\begin{equation}\label{dia2}
\xymatrix{
1\ar[r] & H^1(G,\bD^\times/\mu_m)\ar[r]^-{\mathrm{inf}}\ar[d]^-{\phi} & H^1(G_K,\bD^\times/\mu_m)\ar[d]^-{\varepsilon}\ar[r]^-{\res}&H^1(G_L,\bD^\times/\mu_m)^G\ar[d]\\
1\ar[r] &\prod_vH^1(D_v,\bD^\times/\mu_m)\ar[r]^-{\mathrm{inf}} & \prod_vH^1(G_{K_v},\bD^\times/\mu_m)\ar[r]^-{\res}& 
\prod_vH^1(G_{L_v},\bD^\times/\mu_m)^{D_v}
}\end{equation}
in which we write \(\mathrm{inf}\) and \(\res\) for the various inflation and restriction maps in cohomology. 

\begin{lemma}\label{lemma}
   The inflation map identifies \(\ker(\phi)\) with \(\ker(\varepsilon)\). 
\end{lemma}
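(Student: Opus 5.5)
Since both rows of diagram \eqref{dia2} are exact sequences of pointed sets, the claim reduces to a diagram chase. The one non-formal input is that the right-hand vertical map is injective. The reason is that \(G_L\) acts trivially on \(\bD\), hence on \(\bD^\times/\mu_m\).

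\textbf{Setup.} By definition of \(L\), \(G_L\) is the kernel of the action of \(G_K\) on \(\bD = \End_{\Ccal}(X)^{G_L}\). Therefore \(G_L\), and likewise each \(G_{L_v}\le G_L\), acts trivially on \(\bD^\times\) and on the quotient \(\bD^\times/\mu_m\). Here \(\mu_m\) is \(G_K\)-stable by Assumption \ref{ass1}, and it is normal in \(\bD^\times\) since it lies in the centre of the \(F(\zeta_m)\)-algebra \(\bD\). So
\[
H^1(G_L,\bD^\times/\mu_m)=\hom_{\mr{cont}}(G_L,\bD^\times/\mu_m)/\text{conj},
\]
and similarly for each \(G_{L_v}\). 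The rows of \eqref{dia2} are exact inflation-restriction sequences for nonabelian \(H^1\) (\cite[I.5.8]{Serre-Galois}): inflation is injective, and its image is exactly the kernel of restriction. This uses that \(G_L\) is normal in \(G_K\) and that \(G_{L_v}=G_{K_v}\cap G_L\) is normal in \(G_{K_v}\) with quotient \(D_v\).

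\textbf{Step 1: the right vertical map is injective.} Let \(G_L\to\prod_v G_{L_v}\) be the restriction. I claim that if a continuous homomorphism \(f\colon G_L\to\bD^\times/\mu_m\) restricts to a map conjugate to trivial on every \(G_{L_v}\), then \(f\) is trivial. Indeed, a homomorphism that is conjugate to the trivial one is itself trivial. So \(f\) kills every decomposition group \(G_{L_v}\) and all of their \(G_L\)-conjugates. Because \(f\) is continuous with discrete target, \(\ker f\) cuts out a finite Galois extension \(L'/L\) in which every finite place splits completely. By Chebotarev, \(L'=L\), so \(f=1\). This is the step I expect to carry the only real content, and it is handled by Chebotarev density.

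\textbf{Step 2: the diagram chase.} Since the top inflation is injective, it identifies \(\ker\phi\) with its image. Commutativity of the left square gives \(\mathrm{inf}(\ker\phi)\subseteq\ker\varepsilon\). Conversely, take \(x\in\ker\varepsilon\). Commutativity of the right square puts \(\res(x)\) in the kernel of the right vertical map, so \(\res(x)\) is trivial by Step 1. Exactness of the top row then gives \(x=\mathrm{inf}(y)\) for some \(y\in H^1(G,\bD^\times/\mu_m)\). Finally, \(\mathrm{inf}(\phi(y))=\varepsilon(x)\) is trivial, and the bottom inflation has trivial kernel, so \(\phi(y)\) is trivial. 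Since we are only dealing with kernels, i.e.\ preimages of base points, this suffices even for pointed sets, and hence \(y\in\ker\phi\).
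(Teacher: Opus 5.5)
Your route is the same as the paper's. The rows of \eqref{dia2} are inflation--restriction sequences, the left square gives $\mathrm{inf}(\ker(\phi))\subseteq\ker(\varepsilon)$, and the only real content is showing that $\res(x)$, a homomorphism $f\colon G_L\to\bD^\times/\mu_m$ up to conjugation, is trivial by Chebotarev. Your closing check that $y\in\ker(\phi)$ is correct, and more explicit than the paper.

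\textbf{The gap is in Step 1.} The right vertical map of \eqref{dia2} records only one subgroup $G_{L_v}=G_{K_v}\cap G_L$ for each finite place $v$ of $K$. This is a decomposition group at the single place $w_v$ of $L$ singled out by $\iota_v$, and its $G_L$-conjugates are again decomposition groups at $w_v$. The decomposition groups at the other places of $L$ above $v$ are $G_K$-conjugates, not $G_L$-conjugates. So ``every finite place of $L$ splits completely in $L'$'' does not follow from what you wrote. You only get one place above each $v$, a set of places of $L$ of density $1/[L:K]$, which is too small for Chebotarev.

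This is a genuine failure, not a wording issue. Take $\Gal(M/K)\cong A_4$, $L=M^{V_4}$ and $L'=M^{H'}$ with $H'\le V_4$ of order $2$. All involutions of $A_4$ are conjugate, so above every unramified $v$ some place of $L$ splits completely in $L'$. Choosing the $\iota_v$ accordingly, $G_{L_v}\subseteq G_{L'}$ for all unramified $v$, even though $L'\neq L$.

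\textbf{The fix.} The missing input is the superscript $G$ in the top right corner of \eqref{dia2}, which your argument never uses. Let $c$ be a cocycle representing $x$ and $f=c_{|G_L}$. The cocycle relation and the triviality of the $G_L$-action give
$f(\sigma h\sigma^{-1})=c(\sigma)\,\sigma(f(h))\,c(\sigma)^{-1}$ for $\sigma\in G_K$, $h\in G_L$.
Hence $\ker f$ is normal in $G_K$, and $f$ kills $\sigma G_{L_v}\sigma^{-1}$ for every $\sigma\in G_K$, i.e.\ the decomposition groups at all places of $L$ above $v$.

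Equivalently, a similar computation shows that triviality of a class on $G_{K_v}$ does not depend on which decomposition group at $v$ you pick. So $x\in\ker(\varepsilon)$ already makes $\res(x)$ trivial at every $w\in\Sigma_L^\infty$. This is what the paper uses when it takes the product over $\Sigma_L^\infty$.

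With either fix, every finite place of $L$ splits completely in $L'$ (which is then even Galois over $K$), and your Chebotarev argument concludes.
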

\begin{proof}
Clearly \(\mathrm{inf}(\ker(\phi))\subseteq\ker(\varepsilon)\). For the opposite inclusion, 
   using that \(G_L\) acts trivially on \(\bD\), we see that if \(x\in \ker(\varepsilon)\) then \(\res_{L/K}(x)\) belongs to the kernel of the product of the restriction maps 
    \[\hom(G_L,\bD^\times/\mu_m)/(\bD^\times/\mu_m)\longrightarrow \prod_{v\in \Sigma_L^\infty}\hom(G_{L_v},\bD^\times/\mu_m)/(\bD^\times/\mu_m).\]
   This kernel is trivial by Chebotarev's density theorem, so \(x\) is in the image of the inflation map. 
\end{proof}

\begin{proposition}\label{propnirvana}
Suppose that there exists a prime \(v\) in \(K\) such that \(D_v=G\). Then \((X, \varphi, m)\) satisfies the local-global principle for all integers \(m\).
\end{proposition}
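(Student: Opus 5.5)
We show that \(\Sha_m(G_K, \bD^\times)\) is reduced to a point; by the lemma following Definition \ref{def : selsha}, this is equivalent to the local-global principle. Since \(\Sha_m(G_K,\bD^\times) = \ker(\varepsilon)\cap\ker(\delta) \subseteq \ker(\varepsilon)\), it suffices to prove that \(\ker(\varepsilon)\) is trivial. By Lemma \ref{lemma}, inflation identifies \(\ker(\varepsilon)\) with \(\ker(\phi)\), where
\[
\phi \colon H^1(G,\bD^\times/\mu_m) \longrightarrow \prod_v H^1(D_v,\bD^\times/\mu_m)
\]
is the product of the restriction maps. So the whole problem reduces to showing that \(\phi\) has trivial kernel.

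Under the hypothesis, there is a finite place \(v_0\) with \(D_{v_0} = G\). The \(v_0\)-component of \(\phi\) is then the restriction map \(H^1(G,\bD^\times/\mu_m)\to H^1(D_{v_0},\bD^\times/\mu_m)\) along the identity \(D_{v_0}=G\), which is the identity map. More precisely, \(G_{K_{v_0}}\) surjects onto \(G\), and the diagram (\ref{dia2}) shows that this component is just the inflation from \(G\) to \(D_{v_0}\). Hence this single component of \(\phi\) is injective, and so \(\phi\) is injective as a map of pointed sets. In particular \(\ker(\phi)\) is trivial, and therefore \(\ker(\varepsilon)\) and \(\Sha_m(G_K,\bD^\times)\) are trivial.

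Two points need care. First, one must check that the local cocycle at \(v_0\), computed through the fixed embedding \(\iota_{v_0}\), really corresponds to the class in \(H^1(G,\cdot)\) under the identification \(D_{v_0} = G\). This holds because the decomposition group is defined as the image of \(G_{K_{v_0}}\) in \(G\), and conjugating by the choice of embedding does not affect triviality of a class. Second, the argument uses injectivity of the map of pointed sets, not merely the triviality of a kernel, which would be weaker for non-abelian coefficients. Here the map is literally the identity, so both properties hold. Nothing depends on \(m\), so the conclusion holds for all integers \(m\).
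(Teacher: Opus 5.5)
Your proof is correct and follows the paper's argument. Lemma \ref{lemma} reduces the claim to the triviality of \(\ker(\phi)\), and the \(v_0\)-component of \(\phi\) is restriction along \(D_{v_0}=G\), hence the identity. (Only the triviality of \(\ker(\phi)\) is needed, not full injectivity, so your second ``point of care'' is unnecessary but harmless. Likewise, that component is a restriction map rather than an inflation.)
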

\begin{proof} 
    Take \(x\in \Sha_m(G_K, \bD^\times)\). Then, by Lemma \ref{lemma}, \(x=\mathrm{inf}_{L/K}(y)\) for a unique element 
    \(y\) in \(H^1(G,\bD^\times/\mu_m)\) and the image
    of \(y\) 
    in \(H^1(D_v,\bD^\times/\mu_m)\) is trivial for all \(v\). 
    For \(v\) as in the statement, the map \(H^1(G,\bD^\times/\mu_m)\rightarrow H^1(D_v,\bD^\times/\mu_m)\)
    is the identity. Hence, \(y=1\), so \(x=1\).
\end{proof}
\begin{corollary}
\label{cor : inertiffcyc}
If \(G\) is cyclic, then the local-global principle holds for \((X, \varphi, m)\) for all \(m\). In particular, this happens if \(L = K\).
\end{corollary}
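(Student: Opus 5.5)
The plan is to reduce to Proposition \ref{propnirvana} by producing a finite place \(v\) of \(K\) whose decomposition group \(D_v\) in \(G=\Gal(L/K)\) is all of \(G\). Recall that \(L\) is finite Galois over \(K\) by Lemma \ref{lemma: min endo gen prop}.(\ref{lemma: min endo gen prop 1}) and Assumption \ref{ass1}, so \(G\) is a finite group. By hypothesis \(G\) is cyclic, say generated by \(g\).

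The key step is Chebotarev's density theorem applied to the finite Galois extension \(L/K\). It gives infinitely many finite places \(v\) of \(K\), unramified in \(L\), whose Frobenius conjugacy class is the class of \(g\); since \(G\) is abelian, this class is just \(\{g\}\). For such a \(v\), the decomposition group \(D_v\) is generated by the Frobenius \(\mathrm{Frob}_v=g\). Hence \(D_v=\langle g\rangle=G\).

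Proposition \ref{propnirvana} then applies directly, so \((X,\varphi,m)\) satisfies the local-global principle for every \(m\). In the special case \(L=K\), the group \(G\) is trivial, hence cyclic, and the same argument applies; one could even take any \(v\), since \(D_v=G=1\) trivially.

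I do not expect any real obstacle. The only points to check are that \(D_v\) is indeed generated by Frobenius at an unramified place, where the inertia subgroup is trivial, and that Chebotarev provides such a place, which it does with positive density. Note that the choice of embedding \(\iota_v\) only changes \(D_v\) by conjugation, which does not matter here.
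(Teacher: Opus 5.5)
Your proposal is correct and follows the paper's own proof. You apply Chebotarev to the cyclic extension \(L/K\) to obtain unramified places whose Frobenius generates \(G\) (the paper's ``inert primes''), so that \(D_v = G\), and then conclude by Proposition \ref{propnirvana}.
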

\begin{proof}
This follows by Proposition \ref{propnirvana} and Chebotarev's density theorem: if \(G\) is cyclic, there exist infinitely many unramified primes \(v \in \Sigma_K\) such that \(G = D_v\) (the inert primes).
\end{proof}

Taking the \(G\)-cohomology, respectively \(G_K\)-cohomology, of \eqref{tautological}, we obtain the commutative diagram with exact rows of pointed sets
\begin{equation}\label{diaxi}
\xymatrix
{
H^1(G,\mu_m)\ar[r]\ar[d]^-{\mathrm{inf}_{G/G_K}}& H^1(G,\bD^\times)\ar[r]\ar[d]^-{\mathrm{inf}_{G/G_K}} & H^1(G,\bD^\times/\mu_m)\ar[d]^-{\mathrm{inf}_{G/G_K}} \ar[r]\ar@{-->}[rd]^-\xi & H^2(G,\mu_m) \ar[d]^-{\mathrm{inf}_{G/G_K}} \\
H^1(G_K,\mu_m)\ar[r]& H^1(G_K,\bD^\times)\ar[r] & H^1(G_K,\bD^\times/\mu_m)\ar[r]^-\delta & H^2(G_K,\mu_m)
}
\end{equation}
where \(\mathrm{inf}_{G/G_K}:H^1(G,\bullet^{G_K})\rightarrow H^1(G_K, \bullet)\) denotes the inflation map. 

\begin{proposition}\label{prop}
 The inflation map identifies \(\ker(\phi)\cap\ker(\xi)\) with \(\Sha_m(G_K, \bD^\times)\).    
\end{proposition}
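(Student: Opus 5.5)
By definition, $\Sha_m(G_K,\bD^\times)=\ker(\varepsilon)\cap\ker(\delta)$. The map $\xi$ in \eqref{diaxi} is the diagonal composite $\xi=\delta\circ\mathrm{inf}_{G/G_K}$, which by commutativity of \eqref{diaxi} also equals $\mathrm{inf}_{G/G_K}\circ\delta_G$. Here $\delta_G\colon H^1(G,\bD^\times/\mu_m)\to H^2(G,\mu_m^{G_L})$ is the connecting map for $G$-cohomology.

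The plan is to combine Lemma \ref{lemma} with the definition of $\xi$. Since $G_L$ acts trivially on $\bD$, all three groups $\mu_m$, $\bD^\times$ and $\bD^\times/\mu_m$ are $G_L$-invariant. Hence the inflation maps in \eqref{diaxi} are defined with the full coefficients. The inflation on $H^1(G,\bD^\times/\mu_m)$ is injective (first row of \eqref{dia2}, \cite[Chapter I, \S5.8]{Serre-Galois}).

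First inclusion. Take $y\in\ker(\phi)\cap\ker(\xi)$. Lemma \ref{lemma} gives $\mathrm{inf}(y)\in\ker(\varepsilon)$. Moreover $\delta(\mathrm{inf}(y))=\xi(y)$ is trivial, so $\mathrm{inf}(y)\in\Sha_m(G_K,\bD^\times)$.

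Reverse inclusion. Take $x\in\Sha_m(G_K,\bD^\times)\subseteq\ker(\varepsilon)$. Lemma \ref{lemma} gives a unique $y\in\ker(\phi)$ with $\mathrm{inf}(y)=x$. Then $\xi(y)=\delta(x)$ is trivial, so $y\in\ker(\xi)$. Uniqueness comes from injectivity of inflation, so inflation restricts to a bijection $\ker(\phi)\cap\ker(\xi)\to\Sha_m(G_K,\bD^\times)$.

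The only point needing care is commutativity of the square in \eqref{diaxi} involving the connecting maps. This is the non-abelian setting, with $\mu_m$ central in $\bD^\times$. I would verify it at the cocycle level: lift a $G$-cocycle $\bar c$ with values in $\bD^\times/\mu_m$ to a $G$-cochain $c$ with values in $\bD^\times$. The $2$-cocycle $c(s)\,s(c(t))\,c(st)^{-1}$ then inflates to the same expression computed on $G_K$ for the inflated lift. Centrality of $\mu_m$ is what makes $\delta$ defined on all of $H^1$ (\cite[\S5.7]{Serre-Galois}). Note that $\mu_m$ is central because $\bD$ is an $F(\zeta_m)$-algebra by Assumption \ref{ass1}. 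Even without the square, the definition $\xi=\delta\circ\mathrm{inf}$ suffices for the argument above, so this check is really only needed to identify $\xi$ with the composite through $H^2(G,\mu_m)$.
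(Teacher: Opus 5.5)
Your proof is correct and matches the paper's argument: you get \(\mathrm{inf}(\ker(\phi)\cap\ker(\xi))\subseteq\Sha_m(G_K,\bD^\times)\) from Lemma~\ref{lemma} together with \(\xi=\delta\circ\mathrm{inf}\), and for the reverse inclusion you lift \(x\) to the unique \(y\in\ker(\phi)\) and note that \(\xi(y)=\delta(x)=1\). As you point out, the cocycle-level check that the connecting-map square commutes is not needed here; it only becomes relevant later, in Corollary~\ref{coro2}.
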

\begin{proof}
By Lemma \ref{lemma}, we have an inclusion \(\mathrm{inf}_{G/G_K}(\ker(\phi)\cap\ker(\xi))\subseteq\Sha_m(G_K, \bD^\times)\), 
so we need to show the opposite inclusion. Fix \(x\in \Sha_m(G_K, \bD^\times)\). There is a unique \(y\in \ker(\phi)\) such that \(x=\mathrm{inf}_{G/G_K}(y)\) and, since \(\delta(x)=1\), \(y\in \ker(\xi)\). 
\end{proof}

As a consequence of Proposition \ref{prop}, we see that
\((X, \varphi, m)\) satisfies the local-global principle if and only if \(\ker(\phi)\cap\ker(\xi)\) is trivial. 
In particular, we have the following corollaries. 

\begin{corollary}\label{coro1}
If \(H^1(G,\bD^\times/\mu_m) = 1\), then the local-global principle holds for \((X, \varphi, m)\).  
\end{corollary}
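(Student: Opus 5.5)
This is immediate from Proposition \ref{prop}, once we note what it says in terms of \(G\). Proposition \ref{prop} identifies, via inflation, \(\Sha_m(G_K, \bD^\times)\) with \(\ker(\phi)\cap\ker(\xi)\). Here both \(\phi\) and \(\xi\) are maps of pointed sets with source \(H^1(G,\bD^\times/\mu_m)\): \(\phi\) is the product of restrictions to the decomposition groups \(D_v\) in diagram \eqref{dia2}, and \(\xi\) is the diagonal composite in diagram \eqref{diaxi}. So the plan has two steps.

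First, the hypothesis \(H^1(G,\bD^\times/\mu_m)=1\) means the source of \(\phi\) and \(\xi\) is reduced to its distinguished point. Hence \(\ker(\phi)\cap\ker(\xi)\), which is a pointed subset containing the trivial class, is itself reduced to a point.

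Second, Proposition \ref{prop} transports this to \(\Sha_m(G_K,\bD^\times)\), which is therefore a single point. By the lemma following Definition \ref{def : selsha}, \((X,\varphi,m)\) then satisfies the local-global principle. Equivalently, one can invoke the remark right after Proposition \ref{prop}: the principle holds if and only if \(\ker(\phi)\cap\ker(\xi)\) is trivial.

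There is no real obstacle here. The only care needed is to check that the inflation identification in Proposition \ref{prop} is a bijection of pointed sets sending base point to base point. This holds because inflation sends the trivial class to the trivial class, and it is injective on \(H^1\) (the first map in \eqref{dia2}).
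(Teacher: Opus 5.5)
Your proof is correct and is essentially the paper's argument: the hypothesis forces \(\ker(\phi)=\ker(\xi)=1\), and Proposition~\ref{prop} then shows that \(\Sha_m(G_K,\bD^\times)\) is a single point. Your closing remark about injectivity of inflation is harmless but unnecessary, since Proposition~\ref{prop} already says that inflation maps \(\ker(\phi)\cap\ker(\xi)\) onto \(\Sha_m(G_K,\bD^\times)\).
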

\begin{proof}
   If \(H^1(G,\bD^\times/\mu_m) = 1\), then \(\ker(\phi) = \ker(\xi) = 1\) and the result follows from Proposition \ref{prop}. 
\end{proof}

\begin{corollary}\label{coro2}
If \(\inf_{G/G_K} \colon H^2(G, \mu_m) \to H^2(G_K, \mu_m)\) is injective, then the inflation map identifies \(\Sha_m(G_K, \bD^\times)\) with
\[\ker(\phi)\cap \im(H^1(G, \bD^\times) \longrightarrow H^1(G, \bD^\times/\mu_m)).\]
In particular, if \(H^1(G, \bD^\times) = 1\) and any of the following holds:
\begin{enumerate}
    \item \(\mu_m^G\) is trivial, 
    \item \(H^2(G, \mu_m) = 1\),
\end{enumerate}
then the local-global principle holds for \(\rho\).   
\end{corollary}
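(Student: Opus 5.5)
The first claim is a diagram chase in \eqref{diaxi}, using Proposition \ref{prop}. By Proposition \ref{prop}, inflation identifies \(\Sha_m(G_K, \bD^\times)\) with \(\ker(\phi)\cap\ker(\xi)\). Here \(\xi\) is the composite of the connecting map \(\delta_G\colon H^1(G,\bD^\times/\mu_m)\to H^2(G,\mu_m)\) with \(\inf_{G/G_K}\colon H^2(G,\mu_m)\to H^2(G_K,\mu_m)\); this holds because the right-hand square of \eqref{diaxi} commutes. If the latter inflation is injective, then \(\ker(\xi)=\ker(\delta_G)\). The top row of \eqref{diaxi} is the exact cohomology sequence of pointed sets attached to \eqref{tautological} for the finite group \(G\). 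Here \(\mu_m\) is normal in \(\bD^\times\), and \(G\) acts on \(\bD^\times\) because \(G_L\) acts trivially. So \(\ker(\delta_G)=\im(H^1(G,\bD^\times)\to H^1(G,\bD^\times/\mu_m))\), which gives the first statement.

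For the second part, assume \(H^1(G,\bD^\times)=1\). Then the image above is the base point, so \(\Sha_m(G_K,\bD^\times)\) is trivial as soon as the inflation on \(H^2\) is injective. It therefore suffices to show that each of (1) and (2) forces this injectivity. Case (2) is trivial: if \(H^2(G,\mu_m)=1\), any map out of it is injective. Then the local-global principle follows from the Lemma characterising the principle by \(\Sha_m\) being a point.

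For case (1) I would use the inflation–restriction exact sequence in degree two for \(1\to G_L\to G_K\to G\to 1\) with coefficients \(\mu_m\), which is abelian. The general Hochschild–Serre sequence reads
\[
0\to H^1(G,\mu_m^{G_L})\to H^1(G_K,\mu_m)\to H^1(G_L,\mu_m)^G\to H^2(G,\mu_m^{G_L})\to H^2(G_K,\mu_m).
\]
The action of \(G_L\) on \(\bD\) is trivial, and \(\mu_m\subseteq\bD^\times\), so \(\mu_m^{G_L}=\mu_m\). Hence the kernel of \(\inf\) on \(H^2\) is the image of the transgression from \(H^1(G_L,\mu_m)^G=\hom(G_L,\mu_m)^G\).

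The main obstacle is showing this transgression vanishes when \(\mu_m^G=1\). My first attempt: since \(G\) and \(\mu_m\) are finite, \(H^2(G,\mu_m)\) is killed by \(|G|\) and by \(m\). I would then compare with the stronger hypothesis that \(\mu_m^G\) trivial makes \(\hom(G_L,\mu_m)^G\) small. A \(G\)-equivariant homomorphism \(f\colon G_L^{\mathrm{ab}}\to\mu_m\) sends \(G\)-invariant elements, for instance transfers \(V_{G_K\to G_L}(g)\), into \(\mu_m^G=1\). I would try to show that the transgression of \(f\) is determined by \(f\) on the image of the transfer, or equivalently on the commutator-related classes of an extension. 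If that fails, I would fall back on restricting to Sylow subgroups of \(G\) and using corestriction, which reduces to \(p\)-groups \(P\). There \(\mu_{m}^P=1\) forces \(\mu_m[p^\infty]=1\) or reduces to coprime orders, where \(H^2(P,\mu_m)=0\) outright.
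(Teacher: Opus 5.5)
Your treatment of the first assertion and of case (2) is correct and is the paper's argument. Since $\xi=\mathrm{inf}\circ\delta_G$, injectivity of inflation on $H^2$ gives $\ker(\xi)=\ker(\delta_G)=\im(H^1(G,\bD^\times)\to H^1(G,\bD^\times/\mu_m))$, and Proposition~\ref{prop} concludes.

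\textbf{The gap is case (1).} You correctly reduce it to showing that the transgression vanishes on $H^1(G_L,\mu_m)^G=\hom(G_L,\mu_m)^G$, i.e.\ on $G$-equivariant homomorphisms $G_L^{\mathrm{ab}}\to\mu_m$. However, you do not prove this, and neither of your suggestions can work.
\begin{itemize}
\item The Sylow fallback breaks at once. If $p\mid m$ and $P$ is a $p$-group, then $P$ fixes $\mu_p\subseteq\mu_m$ because $\#\Aut(\mu_p)=p-1$. So $\mu_m^G=1$ is never inherited by the relevant Sylow subgroups, and $H^2(P,\mu_m)$ need not vanish.
\item The transfer idea fails because the transgression of $f$ is not determined by $f$ on $G$-invariant elements.
\end{itemize}

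In fact the implication you are after is false, and nothing in this step uses where the $G$-action on $\mu_m$ comes from. Let $G=\Gal(L/K)\cong S_3$ act on $\mu_3$ through the sign, so $\mu_3^G=1$. Take $L$ inside a $D_9$-extension $N/K$: for instance $K=\Q$, $N$ the Hilbert class field of $\Q(\sqrt{-199})$, whose class group is cyclic of order $9$, and $L$ the subfield fixed by the subgroup of order $3$. Then:
\begin{itemize}
\item $H^2(S_3,\mu_3)\cong H^2(C_3,\mu_3)^{C_2}\cong\Z/3$, since a transposition inverts both $C_3$ and $\mu_3$.
\item Its nonzero class is the extension $D_9\to S_3$.
\item This class dies in $H^2(G_K,\mu_3)$, because $G_K\to S_3$ lifts to $\Gal(N/K)\cong D_9$.
\end{itemize}
Equivalently, $G_L\to\Gal(N/L)\cong\mu_3$ is a nonzero element of $\hom(G_L,\mu_3)^G$ with nonzero transgression, even though its composite with the transfer is trivial, exactly as your observation predicts. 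So case (1) cannot be obtained by proving injectivity of inflation on $H^2$.

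For comparison, the paper's proof handles (1) by writing the source of the transgression as $\hom(G_L,\mu_m^G)$, which vanishes when $\mu_m^G=1$. That is precisely the identification you rightly did not take for granted. The $G$-invariants of $\hom(G_L,\mu_m)$ are equivariant maps, not maps into $\mu_m^G$, and the example above shows the two differ. So the paper does not supply the missing step either.

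A correct argument for (1) has to use the local conditions defining $\ker(\phi)$ rather than injectivity of inflation. For instance, suppose the Sylow $p$-subgroups of $G$ with $p\mid m$ are cyclic. Then they are decomposition groups by Chebotarev, and $H^2(G,\mu_m)$ injects into the product of their $H^2$'s. Hence $\ker(\phi)\subseteq\ker(\delta_G)$, which is trivial when $H^1(G,\bD^\times)=1$.
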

\begin{proof}
Notice that \(\ker(\xi)\) is the pre-image of \(\ker(H^2(G,\mu_m)\xrightarrow{\inf_{G/G_K}}H^2(G_K, \mu_m))\) via the connecting homomorphism \(H^1(G, \bD^\times/\mu_m) \to H^2(G, \mu_m)\). The tail of the inflation-restriction exact sequence for \(\mu_m\) reads
\[
    \hom(G_L, \mu_m^G) \cong H^1(G_L, \mu_m)^G \longrightarrow H^2(G,\mu_m)\xrightarrow{\inf_{G/G_K}}H^1(G_K, \mu_m).
\]
In particular, under the injectivity hypothesis
\[
    \ker(\xi) = \ker(H^1(G, \bD^\times/\mu_m) \to H^2(G, \mu_m)) = \im(H^1(G, \bD^\times) \to H^1(G, \bD^\times/\mu_m)).
\]
Thus, the inflation map identifies \(\Sha_m(G_K, \bD^\times)\) with
\(\ker(\phi)\cap \im(H^1(G, \bD^\times) \to H^1(G, \bD^\times/\mu_m)),\)
which is trivial under the additional hypothesis that \(H^1(G, \bD^\times) = 1\).
\end{proof}

\subsection{Normal subgroups and coprime order} 
In the following we collect some formal results which ensure the validity of the local-global principle in the case where \(G\) admits a normal subgroup satisfying certain co-primality conditions.
Most of these results are not essential to the rest of the paper, but we state them here because they can be used in practice to compute Tate--Shafarevich sets.

\begin{proposition}\label{prop3}
Let \(\bD, D_K\) and \(G\) be as above and consider \(m\geq 2\) an integer. Assume that:\
\begin{enumerate}
    \item \(H^1(G,\bD^\times) = 1\),
    \item there is a normal subgroup \(N \lhd G\) whose order is coprime to \(m\) and
    \item \(H^2(G/N, \mu_m^N) = 1\).
    
\end{enumerate} Then, the local-global principle holds for \((X, \varphi, m)\).   
\end{proposition}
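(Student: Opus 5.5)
The plan is to reduce to Corollary \ref{coro2}, case (2): since \(H^1(G,\bD^\times)=1\) by hypothesis (1), it suffices to show \(H^2(G,\mu_m)=1\). If \(H^2(G,\mu_m)\) vanishes, the inflation \(H^2(G,\mu_m)\to H^2(G_K,\mu_m)\) is trivially injective. Corollary \ref{coro2} then identifies \(\Sha_m(G_K,\bD^\times)\) with a subset of \(\im(H^1(G,\bD^\times)\to H^1(G,\bD^\times/\mu_m))\), which is a single point. Hence the local-global principle holds.

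To prove \(H^2(G,\mu_m)=1\), I would use the Hochschild--Serre spectral sequence for the normal subgroup \(N\lhd G\) with coefficients in the finite \(G\)-module \(\mu_m\). Its \(E_2\)-page is
\[E_2^{p,q}=H^p(G/N,H^q(N,\mu_m))\Rightarrow H^{p+q}(G,\mu_m).\]
Since \(\mu_m\) is abelian, this is the ordinary abelian spectral sequence.

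\begin{itemize}
\item The module \(\mu_m\) is killed by \(m\), and \(H^q(N,-)\) for \(q\ge1\) is killed by \(|N|\). Because \(\gcd(|N|,m)=1\), we get \(H^q(N,\mu_m)=0\) for all \(q\ge1\).
\item The spectral sequence therefore degenerates on the row \(q=0\), giving isomorphisms \(\inf\colon H^p(G/N,\mu_m^N)\xrightarrow{\sim}H^p(G,\mu_m)\) for all \(p\).
\item For \(p=2\), hypothesis (3) gives \(H^2(G,\mu_m)\cong H^2(G/N,\mu_m^N)=1\).
\end{itemize}

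If one prefers to avoid the spectral sequence, the degree-two case follows from the inflation-restriction sequence in degree \(2\). That sequence is exact because \(H^1(N,\mu_m)=0\):
\[0\to H^2(G/N,\mu_m^N)\to H^2(G,\mu_m)\to H^2(N,\mu_m).\]
The right-hand term also vanishes by coprimality, so again \(H^2(G,\mu_m)=1\).

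There is no real obstacle here; the only point needing care is the vanishing of \(H^q(N,\mu_m)\) for \(q\ge1\). This uses the standard fact that these groups are annihilated by both \(|N|\) and \(m\). The rest is a direct application of Corollary \ref{coro2}.
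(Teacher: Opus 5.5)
Your proposal is correct and is essentially the paper's proof. The paper likewise reduces, via Corollary \ref{coro2}, to showing \(H^2(G,\mu_m)=1\), uses coprimality to kill \(H^q(N,\mu_m)\) for \(q\ge 1\), and collapses the Hochschild--Serre spectral sequence to get \(H^2(G,\mu_m)\cong H^2(G/N,\mu_m^N)=1\). Your degree-two inflation--restriction variant is an equally valid way to do the last step.
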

\begin{proof}
By Corollary \ref{coro2}, it is enough to show that \(H^2(G,\mu_m) = 1\).
By \cite[Corollary 1, p.\ 105]{CF}, we have that \(H^j(N, \mu_m) = 1,\) for \(j \ge 1\). This implies that the Hochschild--Serre spectral sequence collapses on the second page, since all the terms \(E^{ij}_2 = H^i(G/N, H^j(N, \mu_m)), j \neq 0,\) on the second page are nil.  By the convergence of the spectral sequence, we have 
\[
        H^2(G, \mu_m) = H^2(G/N, \mu_m^N) = 1. \qedhere 
\]\end{proof}

Let \(\chi \in H^1(G, \bD^{\times}/\mu_m)\) and pick \(c \colon G \to \bD^\times/\mu_m\) a \(1\)-cocycle representing \(\chi\). As in \cite[\S5.3]{Serre-Galois}, we define the twisted \(G\)-module \(\bD^{\times, \chi}\) to be the group \(\bD^{\times}\) with the \(G\)-action given by
\[
    \sigma \cdot_\chi a = c(\sigma)\sigma(a)c(\sigma)^{-1}, 
\]
where \(\sigma \in G, a \in \bD^{\times}\). Up to isomorphism of \(G\)-modules, \(\bD^{\times, \chi}\) only depends on \(\chi\), not on \(c\).

\begin{proposition}\label{prop: case of S3}
Let \(\bD, D_K\) and \(G\) be as above and consider \(m\geq 2\) an integer. Assume that:\
\begin{enumerate}
    \item \label{props3:H90G} \(H^1(G,\bD^{\times, \chi}) = 1\), for all \(\chi \in H^1(G, \bD^\times/\mu_m)\),
    \item \label{props3:coprim} there is a normal subgroup \(N \lhd G\) whose index in \(G\) is coprime to \(m\),
    \item \label{props3:dec} \(N\) is a decomposition group in \(G\) and
    \item \label{props3:H90N} \(H^1(N, \bD^{\times, \chi})^{G/N} = 1\), for all \(\chi \in H^1(G, \bD^\times/\mu_m)\).
\end{enumerate}
Then, the local-global principle holds for \((X, \varphi, m)\).   
\end{proposition}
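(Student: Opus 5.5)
By Proposition \ref{prop}, it suffices to show that \(\ker(\phi)\cap\ker(\xi)\subseteq H^1(G,\bD^\times/\mu_m)\) is reduced to the trivial class. I will in fact aim to show that \(\ker(\phi)\) alone is trivial, using only hypotheses (\ref{props3:H90G}), (\ref{props3:coprim}) and (\ref{props3:dec}). The route is to push a class \(y\in\ker(\phi)\) into \(H^2(G,\mu_m)\), kill it there by restricting to \(N\), and then lift it to \(H^1(G,\bD^\times)\).

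The first point to check is that \(\mu_m\) is central in \(\bD^\times\). This follows from Assumption \ref{ass1}: \(\bD\) carries a fixed \(F(\zeta_m)\)-algebra structure, so \(F(\zeta_m)\), and hence \(\mu_m\), lies in the centre. Consequently the sequence \eqref{tautological} is a central extension of \(G\)-groups. By \cite[\S5.7]{Serre-Galois} there is a connecting map
\[
\Delta\colon H^1(G,\bD^\times/\mu_m)\to H^2(G,\mu_m),
\]
and its kernel equals the image of \(H^1(G,\bD^\times)\). This map is compatible with restriction to subgroups.

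Now fix \(y\in\ker(\phi)\). By hypothesis (\ref{props3:dec}), \(N=D_v\) for some finite place \(v\). The \(v\)-component of \(\phi(y)\) is trivial, so \(\res^G_N(y)=1\), and therefore
\[
\res^G_N(\Delta(y))=\Delta(\res^G_N(y))=1.
\]
Since \(\mu_m\) is an abelian \(G\)-module killed by \(m\), we have \(\mathrm{cor}^N_G\circ\res^G_N=[G:N]\) on \(H^2(G,\mu_m)\). By hypothesis (\ref{props3:coprim}), \([G:N]\) is invertible modulo \(m\), so \(\res^G_N\) is injective on \(H^2(G,\mu_m)\). Hence \(\Delta(y)=1\), and \(y\) lifts to some \(z\in H^1(G,\bD^\times)\).

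Finally, apply hypothesis (\ref{props3:H90G}) to the trivial class \(\chi=1\), for which \(\bD^{\times,\chi}=\bD^\times\). This gives \(H^1(G,\bD^\times)=1\), so \(z=1\) and therefore \(y=1\). Thus \(\ker(\phi)\), and a fortiori \(\ker(\phi)\cap\ker(\xi)\), is trivial, and the local-global principle follows from Proposition \ref{prop}.

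The step I expect to need the most care is the nonabelian cohomology bookkeeping. One must check that the connecting map \(\Delta\) is defined and that the exactness statement "\(\ker\Delta=\) image of \(H^1(G,\bD^\times)\)" holds; this is exactly where centrality of \(\mu_m\) is used. One must also check that \(\Delta\) commutes with restriction from \(G\) to \(N\). If the centrality is not as clean as claimed, I would fall back on twisting, which is what hypothesis (\ref{props3:H90N}) and the twisted modules \(\bD^{\times,\chi}\) in hypothesis (\ref{props3:H90G}) seem designed for. In that approach one compares fibres of \(H^1(G,\bD^\times)\to H^1(G,\bD^\times/\mu_m)\) over \(y\) via \(\bD^{\times,\chi}\), and uses \(H^1(N,\bD^{\times,\chi})^{G/N}=1\) to control the restriction of a lift to \(N\).
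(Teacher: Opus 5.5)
Your argument is correct, but it takes a leaner route than the paper.

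\textbf{The paper's route.} The paper proves the stronger statement that $\res\colon H^1(G,\bD^\times/\mu_m)\to H^1(N,\bD^\times/\mu_m)^{G/N}$ is injective as a map of pointed sets. It uses the Hochschild--Serre collapse to see that restriction on $H^2(G,\mu_m)$ is an isomorphism. It then uses the Corollary to Serre's Proposition~44, together with hypothesis (\ref{props3:H90G}) for \emph{every} $\chi$ and hypothesis (\ref{props3:H90N}), to make the connecting maps $H^1(\,\cdot\,,\bD^\times/\mu_m)\to H^2(\,\cdot\,,\mu_m)$ injective on the whole pointed set. The fibres over non-trivial points are controlled by the twisted groups $H^1(G,\bD^{\times,\chi})$. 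A diagram chase then gives injectivity.

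\textbf{Your route.} You observe that $\ker(\phi)$ is the preimage of the distinguished point, so only a trivial-kernel statement is needed. For that, exactness of the central-extension sequence at $H^1(G,\bD^\times/\mu_m)$ plus $H^1(G,\bD^\times)=1$ suffices. The identity $\mathrm{cor}\circ\res=[G:N]$ replaces the spectral sequence. Consequently your proof uses only the case $\chi=1$ of (\ref{props3:H90G}), plus (\ref{props3:coprim}) and (\ref{props3:dec}). It shows that (\ref{props3:H90N}) and the twisted part of (\ref{props3:H90G}) are superfluous for the conclusion. It even gives $\ker(\phi)=1$ without invoking $\ker(\xi)$.

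Incidentally, even in the paper's own chase the injectivity of the bottom row is not needed. Injectivity of $\res_{H^2}\circ\Delta_G=\Delta_N\circ\res_{H^1}$ already forces $\res_{H^1}$ to be injective. What the paper's approach buys is injectivity of restriction on all of $H^1(G,\bD^\times/\mu_m)$, which is more than the local-global principle requires.

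Your reading that $\mu_m$ is central, because the $F(\zeta_m)$-algebra structure lands in the centre, is the same implicit assumption the paper makes. The paper relies on it when it uses $H^2(G_K,\mu_m)$ and Serre's results for central subgroups.
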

\begin{proof}
Condition (\ref{props3:coprim}) implies that \(H^i(G/N, H^j(N, \mu_m)) = 1\), for \(i \neq 1\), since \(H^j(N, \mu_m)\) has order coprime to \((G:N)\). In particular, the Hochschild--Serre spectral sequence collapses on the second page and the restriction map \(\res \colon H^2(G, \mu_m) \to H^2(N, \mu_m)^{G/N}\) is an isomorphism. By restriction to \(N\), from the first row of \eqref{diaxi}, we obtain the commutative diagram
\[
\xymatrix{
    & H^1(G,\bD^\times) \ar[r] \ar[d]^{\res}
    & H^1(G,\bD^\times/\mu_m) \ar[r] \ar[d]^{\res}
    & H^2(G,\mu_m) \ar[d]^{\res} \\
    & H^1(N,\bD^\times)^{G/N} \ar[r]
    & H^1(N,\bD^\times/\mu_m)^{G/N} \ar[r]
    & H^2(N,\mu_m)^{G/N}.
}
\]
By Conditions \eqref{props3:H90G}, \eqref{props3:H90N} and the Corollary to \cite[Prop.~44]{Serre-Galois}, we deduce that the horizontal maps are injective. A simple diagram chase shows that \(\res \colon H^1(G,\bD^\times/\mu_m) \to H^1(N,\bD^\times/\mu_m)^{G/N}\) is injective. Since \(N = D_v\) for some \(v \in \Sigma_K\), by (\ref{props3:dec}), Proposition \ref{prop} entails that \(\Sha_m(G_K, \bD^\times) = 1\).
\end{proof}

\subsection{Trivial Galois action on \texorpdfstring{\(\mu_m\)}{roots of unity}} \label{ssec: muminvar}
Assume for this subsection that \(F\) is \(\ell\)-adic, \(\rho \colon G_K \to \GL_F(V)\) is a geometric Galois representation and \((X, \varphi)\) is the object associated with \(\rho\) in \S\ref{ssec: cat action on reps}. Let \(m \ge 1\).

\begin{theorem}\label{thm : coprimality}
If \(\mu_m \subseteq D_K\) and \(m\) is coprime to \(n\), then \((\rho, m)\) satisfies the local-global principle.
\end{theorem}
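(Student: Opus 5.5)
The plan is to reduce to a determinant computation combined with Chebotarev density, exploiting the hypothesis \(\gcd(m,n)=1\) to extract an \(m\)-th-order character from the determinant.

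\textbf{Setting up an \(E\)-linear structure.} Let \(E = F(\zeta_m)\). Since \(\mu_m \subseteq D_K\), the element \(\zeta_m\) acts on \(V\) as a \(G_K\)-equivariant \(F\)-linear automorphism. Moreover, by Assumption \ref{ass1} it lies in the centre of the \(F(\zeta_m)\)-algebra \(\bD\). Hence \(V\) becomes an \(E\)-vector space of dimension \(n' = n/[E:F]\), and \(\rho\) becomes an \(E\)-linear representation \(\rho_E\). Every element of \(\bD^\times\) commutes with \(\zeta_m\), so every twist \(\rho^c\) with \(c\) valued in \(\bD^\times\) is again \(E\)-linear. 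In particular, local and global equivalences coming from \(\bD^\times\) are \(E\)-linear isomorphisms. Since \(n'\) divides \(n\), \(n'\) is also coprime to \(m\). An \(m\)-atic twist is then literally \(\chi\rho_E\) for a character \(\chi\colon G_K\to\mu_m\subseteq E^\times\).

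\textbf{Constructing the global character.} Let \(\rho'=\rho^c\) be a locally \(m\)-atic twist, so that for every finite \(v\) we have \(\rho'_{|G_{K_v}} \cong \chi_v\rho_{|G_{K_v}}\) with \(\chi_v^m=1\). Set \(\psi = \det_E\rho'\cdot(\det_E\rho)^{-1}\). This is a character of \(G_K\) of finite order, since it is trivial on \(G_{M}\) for \(M\) the field over which the twist becomes trivial. Locally it satisfies \(\psi_{|G_{K_v}} = \chi_v^{n'}\), so \(\psi^m\) is trivial on every decomposition group. By Chebotarev, the Frobenius elements are dense, and therefore \(\psi^m=1\). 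Choose \(k\) with \(kn'\equiv 1 \pmod m\) and put \(\chi=\psi^k\), a global character of order dividing \(m\). Then
\[
\chi_{|G_{K_v}} = \chi_v^{kn'} = \chi_v
\]
for every finite \(v\).

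\textbf{Concluding via Brauer--Nesbitt.} Put \(\rho''=\chi^{-1}\rho'\). Then \(\rho''_{|G_{K_v}}\cong\rho_{|G_{K_v}}\) for all finite \(v\), so the \(E\)-traces of Frobenius agree at all unramified places. Both representations are semisimple: \(\rho\) is geometric, and \(\rho''\) becomes isomorphic to \(\rho\) on an open subgroup, so it is semisimple by Maschke over a finite quotient. Chebotarev together with Brauer--Nesbitt then gives \(\rho''\cong\rho\), and hence \(\rho'\cong\chi\rho\), which is an \(m\)-atic twist. In the language of \S\ref{sec: def sha}, this shows that \(\Sha_m(G_K,\bD^\times)\) is a point.

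\textbf{Main obstacle.} The delicate step is the first one: justifying that \(\mu_m\) is central, so that all the twisting isomorphisms are \(E\)-linear and \(\det_E\) is compatible with the twists. Without this, \(\det_F(\zeta_m)\) is a norm from \(E\) to \(F\) rather than \(\zeta_m^{n}\), and the exponent computation fails. I would first try to derive the centrality from the \(F(\zeta_m)\)-algebra structure fixed in Assumption \ref{ass1}. If that is insufficient, I would instead decompose \(V\) into \(\zeta_m\)-eigenspaces after extending scalars to \(E\), invoking Lemma \ref{lemma: min endo gen prop}.(\ref{lemma: min endo gen prop 2}), and run the determinant argument eigenspace by eigenspace.
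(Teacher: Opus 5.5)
Your proposal is correct and follows the paper's proof. The paper also takes \(\eta=(\det\rho^\chi/\det\rho)^e\) with \(e\) inverse to \(n\) modulo \(m\), observes \(\eta_{|G_{K_v}}=\chi_v\), uses Chebotarev to get \(\eta^m=1\), and concludes \(\rho^\chi\sim\eta\rho\) by Brauer--Nesbitt. Your only deviation is to take determinants over \(F(\zeta_m)\) with \(n'=n/[F(\zeta_m):F]\), which is legitimate since \(\zeta_m\) is central in \(\bD\) and the local intertwiners lie in \(\bD^\times\); this is a harmless refinement that justifies \(\det(\chi_v\rho)=\chi_v^{n}\det\rho\), which the paper uses tacitly as though \(\mu_m\subseteq F\).
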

\begin{proof}
Let \(\chi \in \Sel_m(G_K, \bD^\times)\) and consider \(\rho^\chi\). The fact that \(\chi\) comes form \(\Sel_m(G_K, \bD^\times)\) means that for all \(v \in \Sigma^\infty_K\), there is some \(\chi_v \in \hom(G_K, \mu_m)\) such that
\[
    \rho^\chi_{|G_{K_v}} \sim \chi_v\rho_{|G_{K_v}}.
\]
Let \(e \in \Z_{\ge 1}\) be an inverse of \(n\) modulo \(m\). Consider \(\eta \coloneqq (\det(\rho^\chi)/\det(\rho))^e \colon G_K \to F^\times\). For all \(v \in \Sigma^\infty_K\), we have that \(\eta_{|D_v} = \chi_v\). By Chebotarev's density theorem, we deduce that \(\eta\) has image in \(\mu_m\). Moreover, by Brauer--Nesbitt, \(\rho^\chi \sim \eta\rho\). This shows that \(\chi \in \Twist_m(G_K, \bD^\times)\). In particular, \(\Sha_m(G_K, \bD^\times)\) is reduced to a point.
\end{proof}

\section{Finiteness of \texorpdfstring{\(\Sha_m\)}{the Tate--Shafarevich group}}
Let \((X, \varphi)\) be as in \S\ref{sec: def sha}. We show that \(\Sha_m(G_K, \bD^\times)\) is finite. The proof is based on an interpretation of \(H^1(G, \bD^{\times, \chi})\) in terms of the skew group ring \(\bD^\chi \rtimes G\), defined below. Lemma \ref{lemma: skew reps and H1} may be seen as rephrasing \cite[Proposition~I.33]{Serre-Galois}, by identifying \emph{principal homogeneous spaces} relative to the \(G\)-group \(\bD^{\times, \chi}\) with free \(\bD^\chi\)-modules of rank one with a ``compatible \(G\)-action''.

\begin{theorem}\label{theorem: finite sha}
The pointed set \(\Sha_m(G_K, \bD^\times)\) is finite.
\end{theorem}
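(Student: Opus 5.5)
By Proposition \ref{prop}, inflation identifies \(\Sha_m(G_K,\bD^\times)\) with the subset \(\ker(\phi)\cap\ker(\xi)\) of \(H^1(G,\bD^\times/\mu_m)\), where \(G=\Gal(L/K)\) is finite. So it suffices to prove the stronger claim that \(H^1(G,\bD^\times/\mu_m)\) is finite. I would not try to use the local conditions at all.

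\emph{Step 1: reduce to the twisted groups \(H^1(G,\bD^{\times,\chi})\).} By Assumption \ref{ass1}, \(\bD\) is an \(F(\zeta_m)\)-algebra, so \(\mu_m\) is central in \(\bD^\times\). Hence \eqref{tautological} is a central extension of \(G\)-groups, and the first row of \eqref{diaxi} extends to the connecting map \(H^1(G,\bD^\times/\mu_m)\to H^2(G,\mu_m)\). The target \(H^2(G,\mu_m)\) is finite, since \(G\) and \(\mu_m\) are finite, so it is enough to show that each fibre of this connecting map is finite. Fix a class \(\chi\) in a given nonempty fibre. By the standard twisting argument for central extensions (Serre, \emph{Cohomologie galoisienne}, I.\S5.7, Prop.~44 and the corollaries around it), the elements of that fibre are in bijection with the image of \(H^1(G,\bD^{\times,\chi})\to H^1(G,(\bD^\times/\mu_m)^{\chi})\). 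Twisting by \(\chi\) fixes \(\mu_m\), because \(\mu_m\) is central. It therefore suffices to show that \(H^1(G,\bD^{\times,\chi})\) is finite for every \(\chi\).

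\emph{Step 2: interpret \(H^1(G,\bD^{\times,\chi})\) via the skew group ring.} Write \(\bD^\chi\) for \(\bD\) with the twisted action \(\sigma\cdot_\chi a=c(\sigma)\sigma(a)c(\sigma)^{-1}\). This is an action of \(G\) by \(F\)-algebra automorphisms. Form the skew group ring \(\bD^\chi\rtimes G\), which is a finite-dimensional \(F\)-algebra. To a \(1\)-cocycle \(b\colon G\to\bD^{\times,\chi}\) I would attach the module structure on \(\bD^\chi\) given by left multiplication for \(\bD^\chi\) and by \(\sigma * x = (\sigma\cdot_\chi x)\,b(\sigma)\) for \(G\). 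I would then check that this is a left \(\bD^\chi\rtimes G\)-module whose underlying \(\bD^\chi\)-module is free of rank one. Any \(\bD^\chi\)-linear isomorphism between two such modules is right multiplication by some \(u\in\bD^{\times}\). Compatibility with the \(G\)-actions is then exactly the relation \(b'(\sigma)=u^{-1}b(\sigma)\,(\sigma\cdot_\chi u)\), up to the chosen convention. This gives an injection of \(H^1(G,\bD^{\times,\chi})\) into the set of isomorphism classes of \(\bD^\chi\rtimes G\)-modules of \(F\)-dimension \(\dim_F\bD\). This is the rephrasing of Serre's Proposition~I.33 alluded to before the statement.

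\emph{Step 3: finiteness.} Since \(\mr{char}F=0\) and \(G\) is finite, and \(\bD^\chi\) is semisimple, the algebra \(\bD^\chi\rtimes G\) is semisimple. Here I would use the Maschke averaging argument: a \(\bD^\chi\)-linear retraction averaged over \(G\) becomes \(\bD^\chi\rtimes G\)-linear. The semisimplicity of \(\bD^\chi\) itself I would take from the setting, or bypass by running the argument with the Jacobson radical. A finite-dimensional semisimple algebra has finitely many simple modules up to isomorphism, so there are only finitely many isomorphism classes of modules of a fixed \(F\)-dimension. Combining Steps 1–3 gives that \(H^1(G,\bD^\times/\mu_m)\) is finite, and hence so is \(\Sha_m(G_K,\bD^\times)\).

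I expect the main obstacle to be Step 2 together with the semisimplicity input in Step 3. Getting the cocycle and module conventions consistent, so that cohomologous cocycles correspond exactly to isomorphic modules, needs care. If semisimplicity of \(\bD\) is not available in this generality, I would instead use that a finite-dimensional algebra has finitely many isomorphism classes of semisimple modules of given dimension. I would then show that the modules arising from cocycles are semisimple, by averaging over \(G\) a splitting that exists over \(\bD^\chi\) because the modules are free.
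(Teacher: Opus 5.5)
Your proposal is correct and is essentially the paper's proof: Step~1 is Lemma~\ref{lemma: finiteness of sha} (Proposition~\ref{prop}, finiteness of \(H^2(G,\mu_m)\) and Serre's corollary to Proposition~44), and Steps~2--3 are Lemmas~\ref{lemma: skew reps and H1} and~\ref{lemma: finiteness of quotXcal}, where the paper's \(\Xcal\) modulo \(\bD^{\times,\chi}\) is exactly your set of \(\bD^\chi\rtimes G\)-module structures on \(\bD^\chi\) extending left multiplication; finiteness again comes from semisimplicity of \(\bD^\chi\rtimes G\), which the paper deduces from characteristic zero alone, implicitly using, as you do, that \(\bD\) is semisimple. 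Your fallback for non-semisimple \(\bD\) is misphrased, though: if \(J=J(\bD^\chi)\neq 0\), then \(J\cdot\bD^\chi\) is a \(\bD^\chi\rtimes G\)-submodule with no complement, so these modules are never semisimple; what averaging a \(\bD^\chi\)-linear section of a surjection from a free \(\bD^\chi\rtimes G\)-module actually shows is that they are \emph{projective} over \(\bD^\chi\rtimes G\), and Krull--Schmidt then gives only finitely many isomorphism classes of projective modules of fixed \(F\)-dimension.
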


Let \(\chi \in H^1(G, \bD^{\times}/\mu_m)\) and pick \(c \colon G \to \bD^\times/\mu_m\) a \(1\)-cocycle representing \(\chi\).
With the same formula we used for the definition of \(\bD^{\times, \chi}\), we can define a \(\chi\)-twisted \(G\)-module \(\bD^\chi\), so that \(\bD^{\times, \chi} = (\bD^\chi)^\times\). Moreover, both the standard action of \(G\) and the \(\chi\)-twisted one are given by automorphisms of \(\bD\) as an \(F\)-algebra. We begin by showing a key lemma. 

\begin{lemma}\label{lemma: finiteness of sha}
If \(H^1(G, \bD^{\times, \chi})\) is finite for all \(\chi \in H^1(G, \bD^{\times}/\mu_m)\), then \(\Sha_m(G_K, \bD^\times)\) is finite.
\end{lemma}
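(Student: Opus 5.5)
By Proposition \ref{prop}, the inflation map identifies \(\Sha_m(G_K, \bD^\times)\) with \(\ker(\phi)\cap\ker(\xi)\), which is a subset of \(H^1(G,\bD^\times/\mu_m)\). I will therefore prove the stronger statement that the whole pointed set \(H^1(G,\bD^\times/\mu_m)\) is finite under the hypothesis. The strategy is to fibre it over \(H^2(G,\mu_m)\) by the connecting map, and to control each fibre by a twisted \(H^1\).

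First, \(\mu_m\) is central in \(\bD^\times\). Indeed, by Assumption \ref{ass1}, \(\bD\) is an \(F(\zeta_m)\)-algebra, so \(\mu_m\subseteq F(\zeta_m)\) commutes with all of \(\bD\). Hence the sequence \eqref{tautological}, viewed as a sequence of \(G\)-groups, is a central extension. Serre's non-abelian cohomology then gives the connecting map
\[
\Delta \colon H^1(G,\bD^\times/\mu_m)\longrightarrow H^2(G,\mu_m),
\]
namely the top-right horizontal map of \eqref{diaxi}. The target \(H^2(G,\mu_m)\) is finite, since \(G\) and \(\mu_m\) are both finite and so there are only finitely many \(2\)-cochains. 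It therefore suffices to show that every fibre \(\Delta^{-1}(\Delta(\chi))\) is finite.

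Fix \(\chi\in H^1(G,\bD^\times/\mu_m)\) and a cocycle \(c\) representing it. Twisting the central extension \eqref{tautological} by \(c\) gives the exact sequence of \(G\)-groups
\[
1\to\mu_m\to\bD^{\times,\chi}\to(\bD^\times/\mu_m)^{\chi}\to1.
\]
The action on \(\mu_m\) is unchanged, because conjugation by \(c(\sigma)\) is trivial on central elements. Twisting also gives a bijection \(\tau_\chi\colon H^1(G,\bD^\times/\mu_m)\to H^1(G,(\bD^\times/\mu_m)^\chi)\) sending \(\chi\) to the neutral class. By \cite[\S I.5.7, Proposition 44]{Serre-Galois}, since \(\mu_m\) is central, the connecting maps satisfy
\[
\Delta(\chi')=\Delta(\chi)\cdot\Delta_\chi(\tau_\chi(\chi')),
\]
where \(\Delta_\chi\) is the connecting map of the twisted sequence. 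Consequently \(\tau_\chi\) maps the fibre \(\Delta^{-1}(\Delta(\chi))\) bijectively onto \(\ker(\Delta_\chi)\). By exactness of the twisted sequence, \(\ker(\Delta_\chi)\) equals the image of \(H^1(G,\bD^{\times,\chi})\), and this is finite by hypothesis.

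Combining these two steps, \(H^1(G,\bD^\times/\mu_m)\) is a union of finitely many finite fibres, hence finite. Therefore \(\ker(\phi)\cap\ker(\xi)\), and so \(\Sha_m(G_K,\bD^\times)\), is finite.

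The main point requiring care is the precise form of Serre's twisting formula for connecting maps into \(H^2\). This formula holds only for central subgroups, which is why the centrality of \(\mu_m\) is needed. One also has to check that the twisted group \((\bD^\times)_c\) in the sense of Serre is exactly the module \(\bD^{\times,\chi}\) defined above. This holds because the lift of \(c(\sigma)\) is well defined up to \(\mu_m\), which acts trivially by conjugation. The remaining steps are formal.
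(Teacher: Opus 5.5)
Your proof is correct and is essentially the paper's argument. The paper also shows that \(H^1(G,\bD^\times/\mu_m)\) is finite from three ingredients: the finiteness of \(H^2(G,\mu_m)\), the exactness of the first row of \eqref{diaxi}, and the Corollary to Proposition~44 in \cite[\S5]{Serre-Galois}, which is exactly the fibre-by-fibre description via twisting that you derive. It then concludes with Proposition~\ref{prop}; you simply unpack that corollary, including the centrality of \(\mu_m\) and the identification of Serre's twisted group with \(\bD^{\times,\chi}\), both of which the paper uses implicitly.
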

\begin{proof} By the exactness of the first row of \eqref{diaxi}, finiteness of \(H^2(G, \mu_m)\) and Corollary to Proposition 44 in \cite[\S5]{Serre-Galois}, the set \(H^1(G, \bD^\times/\mu_m)\) is finite. Proposition \ref{prop} lets us conclude.
\end{proof}
Thus, to prove Theorem \ref{theorem: finite sha}, it is enough to show the following.
\begin{proposition}\label{prop: cohom finiteness}
The pointed set \(H^1(G, \bD^{\times, \chi})\) is finite for all \(\chi \in H^1(G, \bD^{\times}/\mu_m)\).
\end{proposition}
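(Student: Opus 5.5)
Fix \(\chi\) and write \(A \coloneqq \bD^\chi\). This is a finite-dimensional \(F\)-algebra on which the finite group \(G\) acts by \(F\)-algebra automorphisms (the twisted action). The plan is to identify \(H^1(G, A^\times)\) with isomorphism classes of certain modules over the skew group ring \(A \rtimes G\), and then count those modules.

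\textbf{Step 1 (the dictionary).} Let \(A\rtimes G\) be the free left \(A\)-module on symbols \(g \in G\), with multiplication \((a g)(b h) = a\, g(b)\, gh\). Given a cocycle \(c\colon G\to A^\times\), put on \(A\) the \(A\rtimes G\)-module structure in which \(A\) acts by left multiplication and \(g\) acts by \(x \mapsto c(g)\, g(x)\). Call the result \(M_c\).

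Each \(M_c\) is free of rank one over \(A\). Conversely, any \(A\rtimes G\)-module that is free of rank one over \(A\) is of this form: choose a basis \(e\), write \(g e = c(g) e\), and check that \(c\) is a cocycle. A standard check, in the spirit of \cite[Proposition~I.33]{Serre-Galois}, shows that \(M_c \cong M_{c'}\) as \(A\rtimes G\)-modules exactly when \(c\) and \(c'\) are cohomologous. This gives an injection
\[
H^1(G, A^\times) \hookrightarrow \{\text{iso.\ classes of } A\rtimes G\text{-modules of } F\text{-dimension } \dim_F A\}.
\]
This is presumably the content of Lemma~\ref{lemma: skew reps and H1}.

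\textbf{Step 2 (semisimplicity).} Since \(F\) has characteristic zero and \(G\) is finite, I would show that \(A\rtimes G\) is semisimple whenever \(A\) is. Averaging a splitting over \(G\) (dividing by \(|G|\)) works, as in Maschke's theorem: an \(A\)-linear projection onto an \(A\rtimes G\)-submodule becomes \(A\rtimes G\)-linear after averaging.

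Semisimplicity of \(A\) itself has to be established separately. When \(\bD\) is an endomorphism algebra of a semisimple object, it is semisimple: for instance, for Galois representations this follows from geometricity, and for abelian varieties from Poincaré reducibility. I expect the intended general argument to use this, possibly as an implicit assumption. If it is not available, I would instead pass to the radical: \(J(A)\) is \(G\)-stable, and \(1+J(A)\) is a unipotent group, which is uniquely divisible in characteristic zero. Its \(H^1\) with finite \(G\) therefore vanishes, and so do the relevant fibres, so \(H^1(G,A^\times)\) injects into \(H^1(G,(A/J)^\times)\). This reduces the problem to the semisimple case.

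\textbf{Step 3 (counting).} A finite-dimensional semisimple \(F\)-algebra has finitely many simple modules up to isomorphism. Its modules of a fixed \(F\)-dimension are direct sums of these with bounded multiplicities, so there are finitely many isomorphism classes. Combined with the injection of Step 1, \(H^1(G, \bD^{\times,\chi})\) is finite, which is what the proposition claims.

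The main obstacle is Step 1: checking carefully that isomorphism of \(A\rtimes G\)-modules corresponds exactly to cohomologous cocycles, and not to a weaker relation. An isomorphism \(M_c\to M_{c'}\) is \(A\)-linear, hence given by right multiplication by a unit \(b\), and \(G\)-compatibility translates into \(c'(g) = b^{-1}c(g)\,g(b)\) up to the left/right conventions. The non-semisimple reduction in Step 2 is the secondary concern.
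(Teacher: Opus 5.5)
Your route is the paper's: the dictionary of Lemma~\ref{lemma: skew reps and H1}, which matches \(H^1(G,\bD^{\times,\chi})\) with module structures on \(\bD^\chi\) over the skew group ring that extend left multiplication, followed by the counting argument of Lemma~\ref{lemma: finiteness of quotXcal}. Two points need recording: a convention error in Step~1, and a place where you are more careful than the written proof.

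\emph{The formula in Step 1 is wrong when \(A\) is noncommutative.} This is the typical case for \(\bD\), e.g.\ quaternion or matrix algebras. In \(A\rtimes G\) one has \(g\,a=g(a)\,g\), so a module structure extending left multiplication must satisfy \(g\cdot(ax)=g(a)\,(g\cdot x)\). This forces \(g\cdot x=g(x)\,w(g)\) with \(w(g)=g\cdot 1\), so \(x\mapsto c(g)\,g(x)\) is a module structure only if \(c(g)\) is central. Your converse has the matching defect. From \(ge=c(g)e\) you get \(c(gh)=g(c(h))\,c(g)\), so it is \(g\mapsto c(g)^{-1}\), not \(c\), that is a \(1\)-cocycle. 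The correct dictionary is the paper's: \(g\cdot x=g(x)\,u(g)^{-1}\) for a cocycle \(u\). An isomorphism \(M_u\to M_{u'}\) is then \(x\mapsto xb\) with \(b\in A^\times\), and \(G\)-compatibility reads exactly \(u'(g)=b^{-1}u(g)\,g(b)\). With this fix your injectivity claim goes through.

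\emph{On semisimplicity you are more careful than the paper.} The paper asserts that \(\bD^\chi\rtimes G\) is semisimple ``because \(F\) has characteristic zero''. That tacitly requires \(\bD\) itself to be semisimple. This holds for abelian varieties by Poincar\'e reducibility, and for semisimple \(\rho\), since restriction to the finite-index normal subgroup \(G_L\) stays semisimple. It is not implied by Assumption~\ref{ass1} for a general \((X,\varphi)\). The module-counting step genuinely needs it, since a non-semisimple finite-dimensional algebra can have infinitely many isomorphism classes of modules of a given dimension.

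Your Maschke averaging argument and your reduction modulo \(J=J(\bD)\) close this gap and make the statement unconditional. For the vanishing of \(H^1(G,{}_c(1+J))\), ``uniquely divisible'' alone is a bit quick for a non-abelian group. It is cleanest to use the \(G\)-stable filtration by the normal subgroups \(1+J^k\), whose successive quotients are the \(F\)-vector spaces \(J^k/J^{k+1}\) and so have trivial \(H^1\) for finite \(G\); then induct. Every twisted action \(x\mapsto c(g)\,g(x)\,c(g)^{-1}\) is again by algebra automorphisms, so the same filtration handles all fibres of \(H^1(G,A^\times)\to H^1(G,(A/J)^\times)\), as your reduction requires.
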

In what follows, when we refer to the action of \(G\), we mean the one twisted by \(\chi\). The same arguments apply regardless of which \(\chi\) we pick, so we denote the action of \(G\) on \(\bD^\chi\) only by \(\sigma(a)\), for all \(\sigma \in G, a \in \bD^\chi\).

We consider the \emph{skew group ring} \(\bD^\chi \rtimes G\), which is a free left \(\bD^\chi\)-module with basis given by elements of \(G\) and product defined by
\[
    (a_\sigma \sigma)(a_\tau \tau) = (a_\sigma \sigma(a_\tau))(\sigma \tau),
\]
for all \(\sigma, \tau \in G, a_\sigma, a_\tau \in \bD^\chi\). The ring \(\bD^\chi \rtimes G\) is a finite-dimensional \(F\)-algebra, because \(\bD^\chi\) is. Moreover, \(\bD^\chi \rtimes G\) is semisimple as an \(F\)-algebra, because \(F\) has characteristic zero. 
We can think of \(\bD^\chi\) as a sub-\(F\)-algebra of \(\bD^\chi \rtimes G\).

For \(a \in \bD^\chi\), write \(\Lcal_a \colon \bD^\chi \to \bD^\chi, b \mapsto ab,\) and \(\Rcal_a\colon \bD^\chi \to \bD^\chi, b \mapsto ba\). In particular, \(\Lcal \colon \bD^\chi \to \End_F(\bD^\chi), a \mapsto \Lcal_a\), and \(\Rcal \colon \bD^\chi \to \End_F(\bD^\chi), a \mapsto \Rcal_a\), give two representations of the \(F\)-algebra \(\bD^\chi\) into itself and are called the \emph{left}, respectively \emph{right}, \emph{regular representation}. Consider
\[
    \Xcal \coloneqq\{f \in \hom_{\Alg_F}(\bD^\chi \rtimes G, \End_F(\bD^\chi))\mid f_{|\bD^\chi} = \Lcal, f(\sigma)(a) = \sigma(a)f(\sigma)(1_{\bD^\chi}), \forall a \in \bD^\chi, \sigma \in G\}.
\]
Here \(\hom_{\Alg_F}\) denotes homomorphisms of \(F\)-algebras. 
Thus, \(\Xcal\) is the set of lifts of the left regular representation of \(\bD^\chi\) to a \(\bD^\chi \rtimes G\)-module structure on \(\bD^\chi\) compatible with the action of \(G\) on \(\bD^\chi\).

Two module structures \(f, g \in \hom_{\Alg_F}(\bD^\chi \rtimes G, \End_F(\bD^\chi))\) are equivalent if and only if there is some \(\xi \in \Aut_F(\bD^\chi)\) such that \(f(x) \xi = \xi g(x)\), for all \(x \in \bD^\chi \rtimes G\), that is, \(f = \xi g \xi^{-1}\).
If the action of \(\xi \in \Aut_F(\bD^\chi)\) preserves \(\Xcal\), then it must satisfy \(\Lcal_a\xi = \xi \Lcal_a\) for all \(a \in \bD^\chi\), which forces \(\xi\) to be of the form \(\xi = \Rcal_{b^{-1}}\) for \(b \in \bD^{\times, \chi}\). 
On the other hand, the formula \[f \longmapsto  \!b \star f \coloneqq(x \mapsto (a \mapsto f(x)(ab)b^{-1} )),\] 
for all \(f \in \Xcal, x \in \bD^\chi \rtimes G, a \in \bD^\chi,  b \in \bD^{\times, \chi}\), defines a left action of \(\bD^{\times, \chi}\) on \(\Xcal\), which is the restriction of the conjugacy action of \(\Aut_F(\bD^\chi)\) on \(\hom_{\Alg_F}(\bD^\chi \rtimes G, \End_F(\bD^\chi))\) via \(\bD^{\times, \chi} \to \Aut_F(\bD^\chi), b \mapsto \Rcal_{b^{-1}}\). 

\begin{lemma}\label{lemma: finiteness of quotXcal}
The quotient \({}_{\bD^{\times, \chi} \backslash} \Xcal\) is finite.
\end{lemma}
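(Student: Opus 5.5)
The plan is to use the semisimplicity of the skew group ring \(\bD^\chi \rtimes G\). The \(\bD^{\times,\chi}\)-orbits on \(\Xcal\) should then embed into isomorphism classes of \(\bD^\chi \rtimes G\)-modules of fixed \(F\)-dimension, and there are only finitely many such classes.

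\textbf{Step 1: pass from \(\Xcal\) to module isomorphism classes.} Each \(f \in \Xcal\) makes the \(F\)-vector space \(\bD^\chi\) into a left \(\bD^\chi \rtimes G\)-module \(M_f\). Its \(F\)-dimension is \(\dim_F \bD^\chi\), independent of \(f\). Consider the map
\[
  \Xcal \longrightarrow \{\text{iso.\ classes of } \bD^\chi \rtimes G\text{-modules of } F\text{-dimension } \dim_F \bD^\chi\}, \qquad f \longmapsto [M_f].
\]
It is constant on \(\bD^{\times,\chi}\)-orbits, because \(b \star f = \Rcal_{b^{-1}} f \Rcal_{b}\) is a conjugate of \(f\). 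I will also show that it is injective on orbits. Suppose \(f\) and \(g\) give isomorphic modules, via some \(\xi \in \Aut_F(\bD^\chi)\) with \(f = \xi g \xi^{-1}\). Restricting to \(\bD^\chi\) gives \(\Lcal_a \xi = \xi \Lcal_a\) for all \(a\). Hence \(\xi\) is a left \(\bD^\chi\)-module automorphism of the left regular module, so \(\xi = \Rcal_{b'}\) with \(b' = \xi(1)\), and \(b'\) is a unit since \(\xi\) is invertible. Writing \(b' = b^{-1}\), we get \(f = b \star g\). As noted before the statement, every intertwiner is automatically of this form, so equality of isomorphism classes forces equality of orbits. (The compatibility condition \(f(\sigma)(a) = \sigma(a) f(\sigma)(1)\) plays no role here; it only defines the subset \(\Xcal\).)

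\textbf{Step 2: count the isomorphism classes.} Since \(F\) has characteristic zero and \(\bD^\chi\) is a finite-dimensional semisimple \(F\)-algebra, \(\bD^\chi \rtimes G\) is semisimple. This is asserted in the text; I would justify it by the standard Maschke-type averaging argument, projecting with \(\frac{1}{|G|}\sum_\sigma \sigma \circ p \circ \sigma^{-1}\). If \(\bD^\chi\) is not obviously semisimple, I would note that \(\bD\) is the \(G_L\)-endomorphism algebra of a semisimple object; alternatively I would rely on the paper's assertion. By Wedderburn, \(\bD^\chi \rtimes G\) has finitely many simple modules \(S_1,\dots,S_r\), each of positive finite \(F\)-dimension, and every finite-dimensional module is \(\bigoplus_i S_i^{n_i}\), determined up to isomorphism by \((n_1,\dots,n_r)\). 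The constraint \(\sum_i n_i \dim_F S_i = \dim_F \bD^\chi\) allows only finitely many tuples, so the target set is finite and the quotient \({}_{\bD^{\times,\chi}\backslash}\Xcal\) is finite.

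\textbf{Main obstacle.} The one delicate point is semisimplicity of \(\bD^\chi \rtimes G\), which rests on semisimplicity of \(\bD^\chi\) itself. Everything else is formal. If semisimplicity of \(\bD\) were unavailable, I would fall back on finiteness of isomorphism classes modulo the Jacobson radical, but I expect the semisimple case to be the intended one.
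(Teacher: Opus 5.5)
Your proposal is correct and follows essentially the same route as the paper. The paper also identifies the \(\bD^{\times,\chi}\)-orbits on \(\Xcal\) with isomorphism classes of \(\bD^\chi \rtimes G\)-module structures on the \(F\)-vector space \(\bD^\chi\), and concludes by finite-dimensionality and semisimplicity of \(\bD^\chi \rtimes G\); you spell out the injectivity on orbits and the Wedderburn counting, which the paper leaves implicit. You also correctly note that semisimplicity of the skew group ring needs \(\bD\) itself to be semisimple, not only that \(F\) has characteristic zero, which sharpens the paper's one-line justification and holds in the cases of interest (semisimple representations, and abelian varieties by Poincar\'e reducibility).
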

\begin{proof}
By the discussion above, \({}_{\bD^{\times, \chi} \backslash} \Xcal\) is a set of isomorphism classes of representations of the \(F\)-algebra \(\bD^\chi \rtimes G\) into the \(F\)-vector space \(\bD^\chi\). 
Thus, \({}_{\bD^{\times, \chi} \backslash} \Xcal\) has cardinality less than or equal to that of the set \({}_{\Aut_F(\bD^\chi) \backslash}\hom_{\Alg_F}(\bD^\chi \rtimes G, \End_F(\bD^\chi))\)
, which is finite, because \(\bD^\chi \rtimes G\) is semisimple and finite-dimensional. In a diagram, we have
\[ \begin{tikzcd}
         \{\Aut_F(\bD^\chi)f \mid \xi f_{|\bD^\chi} \xi^{-1} \in \Xcal, \exists \xi \in \Aut_F(\bD^\chi)\} \arrow[d, hook] \arrow[r, "{{\sim}}"] & {}_{\bD^{\times, \chi} \backslash} \Xcal  \arrow[d, hook] 
         \\
         \res^{-1}(\Aut_F(\bD^\chi) \Lcal) \arrow[d, "{{\res}}"] \arrow[r, hook] & {}_{\Aut_F(\bD^\chi) \backslash}\hom_{\Alg_F}(\bD^\chi \rtimes G, \End_F(\bD^\chi)) \arrow[d, "{{\res}}"] \\
         \{\Aut_F(\bD^\chi) \Lcal\}\arrow[r, hook]& {}_{\Aut_F(\bD^\chi) \backslash}\hom_{\Alg_F}(\bD^\chi, \End_F(\bD^\chi)),
    \end{tikzcd}
\]
where \(\res\) is given by sending \(f \colon \bD^\chi \rtimes G \to \End_F(\bD^\chi)\) to \(f_{|\bD^\chi}\).    
\end{proof}

\begin{lemma}\label{lemma: skew reps and H1}
There is a natural bijection \(H^1(G, \bD^{\times, \chi}) \cong {}_{\bD^{\times, \chi} \backslash} \Xcal\).
\end{lemma}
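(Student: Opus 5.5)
We will build an explicit map from cocycles to elements of \(\Xcal\) and show that it induces the required bijection on quotients.

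\textbf{The map on cocycles.} Given a \(1\)-cocycle \(c \colon G \to \bD^{\times,\chi}\) for the (twisted) action, so that \(c(\sigma\tau) = c(\sigma)\sigma(c(\tau))\), we set
\[
    f_c(a_\sigma \sigma)(a) \coloneqq a_\sigma\, \sigma(a)\, c(\sigma),
\]
for \(a_\sigma, a \in \bD^\chi\) and \(\sigma \in G\), and extend \(F\)-linearly.

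\textbf{\(f_c\) lies in \(\Xcal\).} First we check that \(f_c\) is an \(F\)-algebra homomorphism. It suffices to check it on the generators \(\bD^\chi\) and \(G\) and on the relation \(\sigma a = \sigma(a)\sigma\). This gives
\[
    f_c(\sigma)f_c(\tau)(a) = \sigma(\tau(a)c(\tau))c(\sigma) = \sigma\tau(a)\,\sigma(c(\tau))\,c(\sigma).
\]
This exposes a convention issue: the product comes out as \(\sigma(c(\tau))c(\sigma)\), whereas the cocycle identity gives \(c(\sigma)\sigma(c(\tau))\). So I would instead use \(f_c(\sigma)(a) \coloneqq \sigma(a)\,c'(\sigma)\), with \(c'(\sigma) = \sigma(x)\)-type adjustments, or use the inverse cocycle \(c(\sigma)^{-1}\) in an appropriate form; the correct convention will be fixed by this computation. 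Next, \(f_c|_{\bD^\chi} = \Lcal\) holds by construction. Finally, \(f_c(\sigma)(a) = \sigma(a)f_c(\sigma)(1)\) holds because \(f_c(\sigma)(1) = c(\sigma)\).

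\textbf{Every element of \(\Xcal\) comes from a cocycle.} Conversely, given \(f \in \Xcal\), put \(c_f(\sigma) \coloneqq f(\sigma)(1) \in \bD^\chi\). This element is a unit, since \(f(\sigma)\) is invertible (because \(f(\sigma^{-1})f(\sigma) = f(1) = \mathrm{id}\)). Using \(f(\sigma)(a) = \sigma(a)c_f(\sigma)\) together with multiplicativity, \(f(\sigma\tau)(1) = f(\sigma)(c_f(\tau))\), we get
\[
    c_f(\sigma\tau) = \sigma(c_f(\tau))\,c_f(\sigma).
\]
So \(c \mapsto f_c\) and \(f \mapsto c_f\) are mutually inverse bijections between \(\Xcal\) and the set of (suitably normalised) cocycles. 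The leftover left/right convention is harmless: \(\sigma \mapsto c_f(\sigma)^{-1}\) turns a cocycle of one type into one of the other, and I would fix the convention accordingly.

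\textbf{Compatibility with the equivalence relations.} For \(b \in \bD^{\times,\chi}\) we compute \((b\star f)(\sigma)(1) = f(\sigma)(b)b^{-1} = \sigma(b)c_f(\sigma)b^{-1}\). After the inversion normalisation this becomes \(b\,c(\sigma)\sigma(b)^{-1}\), which is exactly the coboundary action defining cohomologous cocycles in \cite[\S5]{Serre-Galois}. Hence the bijection descends to \(H^1(G,\bD^{\times,\chi}) \cong {}_{\bD^{\times,\chi}\backslash}\Xcal\). Naturality means compatibility with pointed structures: the trivial cocycle corresponds to \(f(a_\sigma\sigma)(a) = a_\sigma\sigma(a)\).

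\textbf{Main obstacle.} The hardest step is getting the left/right and inverse conventions consistent, so that multiplicativity of \(f\), the cocycle identity, and the \(\star\)-action matching coboundaries all hold simultaneously. I would resolve this by defining \(c_f(\sigma) \coloneqq f(\sigma)(1)^{-1}\), or the variant that makes all three identities hold, and verifying each identity directly.
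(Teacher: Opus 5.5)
Your proposal is correct and is essentially the paper's proof: the paper uses exactly the inverted convention your computations force, namely \(f_u(\sum_\sigma a_\sigma\sigma)(a)=\sum_\sigma a_\sigma\,\sigma(a)\,u(\sigma)^{-1}\) and \(u_f(\sigma)=(f(\sigma)(1))^{-1}\), and it checks that replacing \(u\) by \(\sigma\mapsto b\,u(\sigma)\sigma(b)^{-1}\) replaces \(f_u\) by \(b\star f_u\). You should commit to this convention from the start, so that your product computation is just the cocycle identity for \(u\), rather than first writing the uncorrected map \(f_c(\sigma)(a)=\sigma(a)c(\sigma)\) and then hedging.
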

\begin{proof}
We consider the map
\begin{align*}
    Z^1(G, \bD^{\times, \chi}) & \longrightarrow \Xcal,\\
    u & \longmapsto f_u \coloneqq \sum_{\sigma \in G} a_\sigma \sigma \longmapsto \left( a \mapsto \sum_{\sigma \in G} a_\sigma \sigma(a) u(\sigma)^{-1}\right).
\end{align*}
One can check that \(f_u\) is a morphism of rings using the fact that \(u\) is a \(1\)-cocycle. 
Moreover, if \(b \in \bD^{\times, \chi}\) and we replace \(u\) by \(\sigma \mapsto bu(\sigma)\sigma(b^{-1})\), we see immediately that \(f_u\) is replaced by \(b \star f_u\). Thus, \(u \mapsto f_u\) induces a natural map \(H^1(G, \bD^{\times, \chi}) \to {}_{\bD^{\times, \chi} \backslash} \Xcal\). Conversely, consider \(f \in \Xcal\) and set \(u_f(\sigma) = (f(\sigma)(1_{\bD^\chi}))^{-1}\). 
One can check that \(u_f\) is a \(1\)-cocycle. Moreover, if we replace \(f\) by \(b\star f\), \(b \in \bD^{\times, \chi}\), we get \(u_{b \star f}(\sigma) = bu_f(\sigma)\sigma(b^{-1})\).
Therefore, the cohomology class \([u_f]\) only depends on the equivalence class \([f]\).
Thus, \([u] \mapsto [f_u]\) and \([f] \mapsto [u_f]\) are well-defined and inverse to each other.  
\end{proof}
Lemma \ref{lemma: finiteness of quotXcal} and Lemma \ref{lemma: skew reps and H1} imply Proposition \ref{prop: cohom finiteness}. This concludes the proof of Theorem \ref{theorem: finite sha}.
\begin{remark}
In principle, if one has information about the simple modules of \(\bD^\chi \rtimes G\), for each \(\chi \in H^1(G_K, \bD^\times/\mu_m)\), the argument of Lemma \ref{lemma: finiteness of quotXcal} can be used to obtain explicit bounds on the cardinality of \(H^1(G, \bD^{\times, \chi})\) and, in turn, bounds on the cardinality of \(\Sha_m(G_K, \bD^\times)\). 
\end{remark}

\section{Application to modular forms}

We record here a consequence of our results, which should be compared with  
\cite[Theorem 1.1]{recraj}. Notice that, however, our techniques are different.

\begin{corollary}\label{corollary: lgp for mod forms}

Let \(f \in S_{k}(N, \epsilon)\) be a  normalised cuspidal newform of weight \(k \ge 2\), level \(N\), nebentypus \(\epsilon\) and Hecke field \(F_{f}\). For a prime \(\lambda\) of \(F_{f}\) dividing \(\ell\), take \(\rho \colon G_\Q \to \GL_2(F_{f, \lambda})\) the associated Galois representation.
\begin{enumerate}[(i)]
    \item If \(f\) has no CM, then \(G = 1\) and \((\rho, m)\) satisfies the local-global principle for all \(m \ge 1\).
    \item If \(f\) has CM, then \(G \cong C_2\) and \((\rho, m)\) satisfies the local-global principle for all \(m \ge 1\).
\end{enumerate}
\end{corollary}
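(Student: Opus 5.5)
Both parts come down to computing the group \(G=\Gal(L/\Q)\), where \(L=L_{\mr{end}}(\rho)\), and then applying Corollary \ref{cor : inertiffcyc}: if \(G\) is cyclic (in particular trivial or \(C_2\)), then \((X,\varphi,m)\) satisfies the local-global principle for every \(m\). So the real work is to identify \(\bD=\End_{F_{f,\lambda}}(\rho_{|G_M})\) for \(M\) large, and to locate the field over which all of these endomorphisms become defined.

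\emph{Non-CM case.} By Ribet's and Momose's results on the image of \(\rho\), if \(f\) has no CM then \(\rho_{|G_M}\) is absolutely irreducible for every finite extension \(M/\Q\). The image of \(\rho\) is open in an appropriate algebraic group containing \(\mathrm{SL}_2\) over a subfield, so its restriction to any open subgroup still contains an open subgroup of \(\mathrm{SL}_2(\Z_\ell)\)-type, and such a subgroup is Zariski dense in \(\mathrm{SL}_2\). Hence \(\rho\) is strongly absolutely irreducible. Lemma \ref{lemma: strongly abs irred min end field} then gives \(\bD=D_\Q=F_{f,\lambda}\), the action of \(G_\Q\) on \(\bD\) is trivial, \(L=\Q\), and \(G=1\). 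Corollary \ref{cor : inertiffcyc} concludes.

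\emph{CM case.} Let \(f\) have CM by an imaginary quadratic field \(E\). Then \(\rho\simeq\Ind_{G_E}^{G_\Q}\psi\) for an \(\ell\)-adic Hecke character \(\psi\) of \(E\), and \(\psi\neq\psi^c\), where \(c\) is complex conjugation. Moreover, \(\psi/\psi^c\) has infinite order, because the infinity types differ when \(k\ge 2\). Consequently, for any finite \(M\supseteq E\), the restriction \(\rho_{|G_M}\simeq\psi_{|G_M}\oplus\psi^c_{|G_M}\) is a sum of two distinct characters. After extending scalars if needed (harmless by Lemma \ref{lemma: min endo gen prop}.(\ref{lemma: min endo gen prop 2})), this gives \(\bD\otimes\overline{F}=\overline{F}\times\overline{F}\), and this algebra is already realised over \(E\). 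On the other hand, \(\rho\) is irreducible over \(G_\Q\), so \(D_\Q=F_{f,\lambda}\ne\bD\). Therefore \(L=E\) and \(G=\Gal(E/\Q)\cong C_2\), which is cyclic, and Corollary \ref{cor : inertiffcyc} again gives the result.

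The main obstacle is the non-CM claim that \(\rho_{|G_M}\) stays absolutely irreducible for every \(M\), i.e.\ strong absolute irreducibility, together with handling the extension to \(\overline{F}\). I would import this from the standard big-image theorems (Ribet/Momose) rather than reprove it. One should also verify that inner twists do not enlarge \(\bD\): they enlarge \(D\) only over the coefficient field, and after extending scalars to \(\overline{F}\) Schur's lemma already settles the issue.
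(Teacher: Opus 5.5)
Your proposal is correct and follows essentially the same route as the paper. In the non-CM case, strong absolute irreducibility (via Ribet's results) together with Lemma \ref{lemma: strongly abs irred min end field} gives \(L=\Q\); in the CM case, the decomposition \(\rho_{|G_E}\simeq\psi\oplus\psi^c\) gives \(L=E\); and Corollary \ref{cor : inertiffcyc} finishes both. Two of your details go beyond the paper's terse argument, which simply asserts \(D_M\cong F_{f,\lambda}\times F_{f,\lambda}\): you note that \(\psi/\psi^c\) has infinite order, so the two characters stay distinct on every open subgroup, and that one may need to extend scalars because \(\psi\) need not take values in \(F_{f,\lambda}\).
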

\begin{proof}
The form \(f\) has CM by a quadratic imaginary extension \(M\) of \(\Q\) if and only if \(\rho = \Ind_{G_M}^{G_\Q} \psi\), for some some Hecke character \(\psi\) of \(G_M\). From \cite[Proposition~4.2, Proposition~4.4]{ribetgalrepseigen}, we deduce that \(f\) has no CM if and only if \(\rho\) is strongly absolutely irreducible (if it were reducible restricting to \(H \subseteq G_\Q\) open and extending scalars from \(F_{f, \lambda}\) to some extension \(E\), then \(\rho(H)\) would be abelian, already over \(F_{f, \lambda}\)).
Thus, by Lemma \ref{lemma: strongly abs irred min end field}, for \(f\) without CM, we have \(D_L = D_\Q\) and the conclusion follows from Corollary \ref{cor : inertiffcyc}.  
For \(f\) with CM, \(\rho_{|G_M} \cong \psi \oplus \psi^c, c \in \Gal(M/\Q), c \neq 1,\) and, since \(\psi\neq \psi^c\), \(D_M \cong F_{f, \lambda} \times F_{f, \lambda}\). Then \(L = M\). This concludes the proof, by Corollary \ref{cor : inertiffcyc}.
\end{proof}

\section{Applications to abelian varieties}

Let \(A\) be an abelian variety over the number field \(K\). Write \( L=L_\mr{end}(A)\) for the minimal endomorphism field of \(A\). Recall that this is the same as the minimal endomorphism field of \(\rho_{A, \ell}\), by Lemma \ref{lemma: min endo of ab is repth}. For \(M\) any finite extension of \(K\), let us write \(D_M(A)\), or simply \(D_M\) when \(A\) is clear from context, instead of \(\End^0(A_M)\).

Let \(m \ge 1\) be an integer. Like in Remark \ref{sssec: extra end}, we fix \(E\) any commutative subfield of \(D_K(A)\) central in \(\bD(A)\). Notice that \(E\) is a number field. In this section, we make the following assumption.

\begin{assumption}\label{ass: mum in end for ab vars}
We assume that \(\bD(A)\) admits a structure of \(E(\zeta_m)\)-algebra and we fix one. Moreover, we assume that \(\mu_m \subseteq \bD(A)^\times\) is stable under the action of \(G_K\).
\end{assumption}

Fix \(\lambda\) a prime of \(E\) above \(\ell\) and write \(E_\lambda\) for the completion of \(E\) at \(\lambda\). We consider the natural representation \(\rho_{A, \lambda} \colon G_K \to \GL_{E_\lambda}(V_\lambda(A))\). Assumption \ref{ass: mum in end for ab vars} for \(A\) implies Assumption \ref{ass1} for \(\rho_{A, \lambda}\).
\begin{lemma}\label{lemma: if lgp for rep then lgp for abvar}
For all \(\lambda\), there is a natural injection \[\Sha_m(G_K, \bD(A)^\times) \hookrightarrow \Sha_m(G_K, \bD(\rho_{A, \lambda})^\times).\]
In particular, if the local-global principle holds for \((\rho_{A, \lambda}, m)\), for some $\lambda$, then it holds for \((A, m)\). 
\end{lemma}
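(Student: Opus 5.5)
The plan is to compare the two Galois modules directly. By Faltings's theorem, in the form of \S\ref{sssec: extra end}, for every finite extension \(M/K\) we have an isomorphism of \(E_\lambda\)-algebras
\[
\bD(A)\otimes_E E_\lambda \;\cong\; \bD(\rho_{A,\lambda}).
\]
This isomorphism is compatible with the \(G_K\)-actions: on the left \(G_K\) acts through the first factor, and on the right it acts by conjugation by \(\rho_{A,\lambda}\). By Lemma \ref{lemma: min endo of ab is repth} both actions factor through the same group \(G=\Gal(L/K)\). The natural map \(\iota\colon \bD(A)\to \bD(\rho_{A,\lambda})\), \(a\mapsto a\otimes 1\), is therefore an injective \(G_K\)-equivariant ring homomorphism. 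Since the \(E(\zeta_m)\)-structure is carried along, it restricts to the identity on \(\mu_m\). Hence \(\iota\) induces a morphism of the exact sequences \eqref{tautological} for \(A\) and for \(\rho_{A,\lambda}\). By functoriality it induces a morphism of the whole diagram \eqref{dia1}, including the local rows (restriction commutes with \(\iota_*\)) and the connecting maps \(\delta\), whose target \(H^2(G_K,\mu_m)\) is the same in both cases. It follows that \(\iota_*\) sends \(\Sha_m(G_K,\bD(A)^\times)=\ker\varepsilon\cap\ker\delta\) into \(\Sha_m(G_K,\bD(\rho_{A,\lambda})^\times)\).

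Injectivity is the main point. By Proposition \ref{prop}, I would work with \(G\)-cohomology: it suffices to show that
\[
H^1(G,\bD(A)^\times/\mu_m)\to H^1(G,\bD(\rho_{A,\lambda})^\times/\mu_m)
\]
is injective on the relevant kernels. A more direct route stays in the twist language of Proposition \ref{proposition: twists and cohomology}. Take \(x,y\in\Sha_m(G_K,\bD(A)^\times)\) with the same image. Lift each to \(\Sel_m\) via \eqref{FES}, which gives twists \(B=A^{\tilde x}\) and \(C=A^{\tilde y}\). The equality of the images in \(\Sha_m\) of the representation means that \(V_\lambda(B)\) and \(\eta V_\lambda(C)\) are \(G_K\)-equivalent for some character \(\eta\colon G_K\to\mu_m\). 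Here I use that the fibres of \(\Sel_m\to\Sha_m\) are the \(H^1(G_K,\mu_m)\)-orbits; this holds because \(\mu_m\) is central in the relevant sense by Assumption \ref{ass: mum in end for ab vars}, so the fibres of \(H^1(\bD^\times)\to H^1(\bD^\times/\mu_m)\) are orbits of the twisting action.

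Faltings's isomorphism
\[
\hom^0(B,C^\eta)\otimes_E E_\lambda\cong \hom_{E_\lambda[G_K]}(V_\lambda B,V_\lambda C^\eta)
\]
then applies. Since \(V_\lambda\) is a fully faithful realisation, as in Lemma \ref{lemma: min endo of ab is repth}, an isomorphism on the right gives \(B\sim C^\eta\) up to isogeny. The argument is that the set of isomorphisms is a nonempty Zariski-open subset of a vector space over \(E\) after base change, so it contains \(E\)-rational points. Thus \(x\) and \(y\) coincide, which proves injectivity.

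The expected obstacle is exactly this descent step from an \(E_\lambda\)-linear equivalence to an \(E\)-rational isogeny, and checking that \(\eta\) corresponds to the same class on both sides. Equivalently, the difficulty is that \(H^1(G,\bD(A)^\times)\to H^1(G,(\bD(A)\otimes E_\lambda)^\times)\) has trivial kernel on the twisted forms. I would handle it with the open-dense argument for invertible elements in \(\hom^0\otimes E_\lambda\), applied to the twisted module \(\bD^{\times,\chi}\) in place of \(\bD^\times\). The final claim then follows, since a trivial target forces a trivial source.
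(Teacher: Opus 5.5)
Your proposal is correct and follows essentially the paper's route. In both, the key step is that Faltings's theorem plus a density argument makes $H^1(G_K,\bD(A)^\times)\to H^1(G_K,\bD(\rho_{A,\lambda})^\times)$ injective; you use Zariski density of the $E$-points inside their $E_\lambda$-span, where the paper uses $\lambda$-adic approximation. This is combined with the description of the fibres of $H^1(G_K,\bD^\times)\to H^1(G_K,\bD^\times/\mu_m)$ as $H^1(G_K,\mu_m)$-orbits, which holds because $\mu_m$ is central. The only difference is bookkeeping: the paper proves injectivity on $H^1(G_K,\bD^\times)$ first and then passes to the quotient by $H^1(G_K,\mu_m)$, whereas you absorb the character $\eta$ into $C^\eta$ before invoking Faltings.
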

\begin{proof}
From the inclusions \(\mu_m \subseteq \bD(A) \subseteq \bD(A) \otimes_E E_\lambda\) and Faltings's theorem, we see that there is a well defined morphism
\[\begin{tikzcd}
	1 & {\Twist_m(G_K, \bD(A)^\times)} & {\Sel_m(G_K, \bD(A)^\times)} & {\Sha_m(G_K, \bD(A)^\times)} & 1 \\
	1 & {\Twist_m(G_K, \bD(\rho_{A, \lambda})^\times)} & {\Sel_m(G_K, \bD(\rho_{A, \lambda})^\times)} & {\Sha_m(G_K, \bD(\rho_{A, \lambda})^\times)} & 1
	\arrow[from=1-1, to=1-2]
	\arrow[from=1-2, to=1-3]
	\arrow[from=1-3, to=1-4]
	\arrow[from=1-4, to=1-5]
	\arrow[from=2-1, to=2-2]
	\arrow[from=2-2, to=2-3]
	\arrow[from=2-3, to=2-4]
	\arrow[from=2-4, to=2-5]
    \arrow[from=1-2, to=2-2]
    \arrow[from=1-3, to=2-3]
    \arrow[from=1-4, to=2-4]
\end{tikzcd}\]
between the short exact sequences associated with \(A\) and \(\rho_{A, \lambda}\) in (\ref{FES}). Moreover, by definition, the map \(\Twist_m(G_K, \bD(A)^\times) \to \Twist_m(G_K, \bD(\rho_{A, \lambda})^\times)\) is surjective.

We claim that the natural map \(H^1(G_K, \bD(A)^\times) \to H^1(G_K, \bD(\rho_{A, \lambda})^\times)\) is injective. Consider \(\chi_1, \chi_2 \in H^1(G_K, \bD(A)^\times)\) and denote by \(A^{\chi_1}\) and \(A^{\chi_2}\) some choices of twists of the abelian variety \(A\) in the equivalence classes corresponding to \(\chi_1\) and \(\chi_2\), respectively. The classes \(\chi_1\) and \(\chi_2\) have the same image in \(H^1(G_K, \bD(\rho_{A, \lambda})^\times)\) if and only if there is an isomorphism of representations \(\xi_\lambda \colon \rho_{A^{\chi_1}, \lambda} \xrightarrow{\sim} \rho_{A^{\chi_2}, \lambda}\). Via the isomorphism
\(
    \hom^0(A^{\chi_1}, A^{\chi_2}) \otimes_{E} E_\lambda \xrightarrow{\sim} \hom_{E_\lambda[G_K]}(\rho_{A^{\chi_1}, \lambda}, \rho_{A^{\chi_2}, \lambda}),
\)
we can approximate \(\xi_\lambda\) by an element of \(\hom^0(A^{\chi_1}, A^{\chi_2})\), which will be invertible if it is sufficiently close to \(\xi_\lambda\) in the \(\lambda\)-adic norm. This shows that \(A^{\chi_1}\) is isogenous to \(A^{\chi_2}\) and thus, as twists, they are equivalent. This means that \(\chi_1 = \chi_2\), by Proposition \ref{proposition: twists and cohomology}, and we conclude that \(H^1(G_K, \bD(A)^\times) \to H^1(G_K, \bD(\rho_{A, \lambda})^\times)\) is injective. This implies that \(\Twist_m(G_K, \bD(A)^\times) \to \Twist_m(G_K, \bD(\rho_{A, \lambda})^\times)\) is a bijection, and that the central arrow \(\Sel_m(G_K, \bD(A)^\times) \to \Sel_m(G_K, \bD(\rho_{A, \lambda})^\times)\) in the diagram above is injective.

By \cite[Corollary to Proposition I.44, \S5]{Serre-Galois}, \(S(\bullet) \coloneqq \ker(H^1(G_K, \bD(\bullet)^\times/\mu_m) \to H^2(G_K, \mu_m)), \bullet = A, \rho_{A, \lambda},\) is naturally identified with the quotient of \(H^1(G_K, \bD^\times)\) by the action of \(H^1(G_K, \mu_m)\). In particular, the natural map \(S(A) \to S(\rho_{A, \lambda})\) can be identified with
\[
    \frac{H^1(G_K, \bD(A)^\times)}{H^1(G_K, \mu_m)} \longrightarrow \frac{H^1(G_K, \bD(\rho_{A, \lambda})^\times)}{H^1(G_K, \mu_m)},
\]
which is also injective. Since \(\Sha_m(G_K, \bD(\bullet)^\times) \subseteq S(\bullet)\), we have the desired injectivity.

In particular, when \(\Sha_m(G_K, \bD(\rho_{A, \lambda})^\times)\) is reduced to a point, so is \(\Sha_m(G_K, \bD(A)^\times)\)
and the local-global principle holds for \((A, m)\).
\end{proof}

\subsection{The geometrically simple abelian case} \label{sec: simp CM}

\label{sec: commutative simple} Let us write \(D_M\) instead of \(D_M(A)\). In this subsection, we work under the following hypothesis (which, by Albert's classification, \cite[Theorem 2, Chapter IV.21]{av}, is equivalent, when \(m\geq 3\), to \(\bD\) being a CM field). 

\begin{assumption}\label{ass2}
    Suppose that \(A\) is geometrically simple and \(\bD\) is commutative. In particular, \(D_K = \bD^G\) is also commutative and we can take \(E = D_K\) in Assumption \ref{ass: mum in end for ab vars}, which we also assume.
\end{assumption}
By minimality of \(L\), the natural morphism \(G \longrightarrow \Gal(\bD/D_K)\) is injective, hence it is an isomorphism. We have proved the following.

\begin{lemma}\label{lemmaCM} If \(\bD\) is commutative, then the canonical action of \(G_K\) on \(\bD\) induces an isomorphism of groups \(G \simeq \Gal(\bD/D_K)\). 
\end{lemma}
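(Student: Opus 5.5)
The plan is to make the action of \(G_K\) on \(\bD\) explicit and then use the definition of \(L\) as the fixed field of its kernel. First I check that \(G_K\) acts on \(\bD\) by field automorphisms fixing \(D_K\) pointwise. By construction (\S\ref{sec: galois descent}) each \(\sigma \in G_K\) acts on \(\End_{\Ccal}(X)\), hence on \(\bD = \End^0(A_{\overline{K}})\), by \(f \mapsto \psi_\sigma \circ T(\sigma)(f) \circ \varphi_\sigma^{-1}\) with \(\psi = \varphi\). This is multiplicative and \(F\)-linear, here \(\Q\)-linear. Since \(\bD\) is a commutative division algebra (\(A\) geometrically simple), it is a number field and each \(\sigma\) acts as a field automorphism. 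Because \(D_K = \bD^{G_K}\) by definition, every such automorphism fixes \(D_K\) pointwise. This gives a group homomorphism \(G_K \to \Aut(\bD/D_K)\).

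Next I identify the kernel. By Definition \ref{def: min endo field}, the kernel of the action of \(G_K\) on \(\End_{\Ccal}(X)^{\mr{sm}} = \bD\) is exactly \(G_L\). Hence the homomorphism factors through an injection \(G = G_K/G_L \hookrightarrow \Aut(\bD/D_K)\). This is the \enquote{minimality of \(L\)} step.

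For surjectivity, let \(H \le \Aut(\bD/D_K)\) be the image of \(G\), a finite group. Artin's theorem gives that \(\bD/\bD^H\) is Galois with group \(H\). But \(\bD^H = \bD^{G} = \bD^{G_K} = D_K\), since \(G_K\) acts through \(G\). So \(\bD/D_K\) is Galois with \(\Gal(\bD/D_K) = H\), and therefore \(G \simeq \Gal(\bD/D_K)\).

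The main point requiring care is the first step: the twisted conjugation action really is by ring automorphisms, so that Artin's theorem applies. This holds because the \(T(\sigma)\) are functors and the \(\varphi_\sigma\) are isomorphisms. Everything else is formal Galois theory.
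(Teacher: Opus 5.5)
Your proof is correct and follows essentially the same route as the paper. Injectivity of \(G \to \Aut(\bD/D_K)\) comes from the minimality of \(L\), i.e.\ \(G_L\) is the kernel of the action. Surjectivity is the Galois-theoretic fact that the image \(H\) satisfies \(\bD^H = \bD^{G_K} = D_K\); the paper leaves this implicit in its ``hence it is an isomorphism'', and you make it explicit via Artin's theorem.
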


It follows from Lemma \ref{lemmaCM} that, for all \(i\geq 0\), we have canonical isomorphisms of groups 
\begin{equation}\label{galois}
H^i(G,\bD^\times)\simeq H^i(\Gal(\bD/D_K),\bD^\times).\end{equation}

\begin{corollary}\label{coroarith} Let \(\sharp G\) be the cardinality of \(G\). If \((m,\sharp G)=1\), then the local-global principle holds for \((A,m)\).      
\end{corollary}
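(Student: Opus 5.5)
The plan is to show directly that \(H^1(G,\bD^\times/\mu_m)=1\) and then apply Corollary \ref{coro1}. Write \(G=\Gal(L/K)\); by Lemma \ref{lemmaCM}, \(G\simeq\Gal(\bD/D_K)\), and \(\bD\) is a field (a CM field) with \(G\) acting through Galois automorphisms. Taking \(G\)-cohomology of \eqref{tautological} (now an exact sequence of abelian \(G\)-modules, since \(\bD^\times\) is commutative) gives the exact sequence
\[
H^1(G,\bD^\times)\longrightarrow H^1(G,\bD^\times/\mu_m)\longrightarrow H^2(G,\mu_m).
\]
By \eqref{galois} and Hilbert's Theorem 90 for the Galois extension \(\bD/D_K\), the group \(H^1(G,\bD^\times)\simeq H^1(\Gal(\bD/D_K),\bD^\times)\) is trivial.

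Next I would show that \(H^2(G,\mu_m)=1\). The group \(\mu_m\) has order \(m\), so \(H^2(G,\mu_m)\) is killed by \(m\). It is also killed by \(\sharp G\), as is every positive-degree cohomology group of the finite group \(G\), via restriction and corestriction to the trivial subgroup (the reference \cite[Corollary 1, p.~105]{CF} already used in Proposition \ref{prop3}). Since \((m,\sharp G)=1\), this forces \(H^2(G,\mu_m)=1\). Combining with the previous paragraph, the middle term \(H^1(G,\bD^\times/\mu_m)\) is trivial, and Corollary \ref{coro1} gives the local-global principle for \((A,m)\).

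An equivalent route is Corollary \ref{coro2}(2): it requires \(H^1(G,\bD^\times)=1\), which holds by Hilbert 90, and \(H^2(G,\mu_m)=1\), which is exactly what was just shown. Injectivity of inflation on \(H^2\) is then automatic, since the source is trivial.

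I expect no serious obstacle. The only point needing care is checking that the \(G\)-action on \(\bD\) is the honest Galois action, so that Hilbert 90 applies; this is the content of Lemma \ref{lemmaCM} and the identification \eqref{galois}. A mild subtlety is that \(\mu_m\) is a \(G\)-stable subgroup of \(\bD^\times\) by Assumption \ref{ass: mum in end for ab vars}, so the quotient \(\bD^\times/\mu_m\) is a genuine \(G\)-module and the long exact sequence applies.
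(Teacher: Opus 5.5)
Your proof is correct and uses the same two ingredients as the paper: Hilbert 90 via Lemma~\ref{lemmaCM} and \eqref{galois}, and the vanishing of \(H^2(G,\mu_m)\) because it is killed by both \(m\) and \(\sharp G\). The paper concludes via Corollary~\ref{coro2}(2), which is exactly your ``equivalent route''; your main route only repackages the same facts through the long exact sequence to get \(H^1(G,\bD^\times/\mu_m)=1\), and then invokes Corollary~\ref{coro1} instead.
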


\begin{proof} Since \((m, \sharp G)=1\), we have \(H^2(G,\mu_m) = 1\) (by \cite[Corollary 1, p.\ 105]{CF}). 
Also, \(H^1(G,\bD^\times) = 1\) by \eqref{galois} and Hilbert 90. The result follows then from Corollary \ref{coro2}. 
\end{proof}

\begin{theorem}\label{thm: main_Gal}
For all (odd) \(m \ge 3\) such that \(\mu_m^G = \{1\}\), \((A,m)\) satisfies the local-global principle.    
\end{theorem}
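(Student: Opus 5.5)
The plan is to deduce the statement from Corollary \ref{coro2}, case (1), applied to the object with descent datum attached to \(A\). Corollary \ref{coro2}(1) has two hypotheses: \(H^1(G,\bD^\times)=1\), and \(\mu_m^G\) trivial. The second is exactly our assumption, so essentially all the work is in the first. Here \(G=\Gal(L/K)\) with \(L=L_{\mr{end}}(A)\), and under Assumption \ref{ass2} the algebra \(\bD\) is a field (a CM field when \(m\ge 3\)) containing \(\mu_m\) by Assumption \ref{ass: mum in end for ab vars}. Note also that \(\mu_m^G\) trivial is equivalent to \(D_K^\times\cap\mu_m=\{1\}\), since \(D_K=\bD^G\); this is the form in which the statement appears as Theorem B.

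\textbf{Step 1.} By Lemma \ref{lemmaCM}, the action of \(G_K\) on \(\bD\) induces an isomorphism \(G\simeq\Gal(\bD/D_K)\). So \eqref{galois} gives \(H^1(G,\bD^\times)\simeq H^1(\Gal(\bD/D_K),\bD^\times)\), which vanishes by Hilbert's Theorem 90 for the finite Galois extension \(\bD/D_K\).

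\textbf{Step 2.} I would check that the injectivity hypothesis of Corollary \ref{coro2} on \(\inf_{G/G_K}\colon H^2(G,\mu_m)\to H^2(G_K,\mu_m)\) holds. In the proof of Corollary \ref{coro2}, the obstruction to injectivity is the image of the transgression from \(H^1(G_L,\mu_m)^G\). Since \(G_L\) acts trivially on \(\bD\), and hence on \(\mu_m\), this group is \(\hom(G_L,\mu_m)^G\), which the paper identifies with \(\hom(G_L,\mu_m^G)\). If \(\mu_m^G=1\) this term is trivial and inflation on \(H^2\) is injective. This is precisely the mechanism behind case (1), so I can simply invoke that case.

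\textbf{Step 3.} Corollary \ref{coro2} then identifies \(\Sha_m(G_K,\bD^\times)\) with \(\ker(\phi)\cap\im(H^1(G,\bD^\times)\to H^1(G,\bD^\times/\mu_m))\). By Step 1 this image is a single point, so \(\Sha_m\) is trivial, and the local-global principle for \((A,m)\) follows from the lemma characterising the local-global principle by triviality of \(\Sha_m\). Alternatively, one can run the same argument with \(\rho_{A,\lambda}\) and pass to \(A\) via Lemma \ref{lemma: if lgp for rep then lgp for abvar}, but working directly with \(\bD(A)\) suffices.

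The main obstacle is Step 2, the identification \(H^1(G_L,\mu_m)^G=\hom(G_L,\mu_m^G)\). A priori \(G\) acts on \(\hom(G_L,\mu_m)\) both through conjugation on \(G_L\) and through its action on \(\mu_m\), so a \(G\)-invariant homomorphism need not take values in \(\mu_m^G\). If this identification turns out to be too strong, my first fallback would be to avoid needing injectivity on all of \(H^2\). Concretely, I would use Proposition \ref{prop} directly and show that every \(y\in\ker(\phi)\cap\ker(\xi)\) comes from \(H^1(G,\bD^\times)\), which is trivial by Step 1. For this I would use the description of \(\ker(\xi)\) as the preimage, under the connecting map, of the transgression image, together with the fact that \(\mu_m^G=1\) makes \(H^0(G,\mu_m)\) trivial. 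The oddness of \(m\) should only enter to guarantee \(\mu_m\neq\{\pm1\}\) and the CM structure of \(\bD\), and I do not expect to need it elsewhere.
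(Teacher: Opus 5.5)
Your route is exactly the paper's: Lemma~\ref{lemmaCM} and Hilbert~90 give $H^1(G,\bD^\times)=1$, and then case~(1) of Corollary~\ref{coro2} is invoked. However, the doubt you raise in Step~2 is justified, and it is a genuine gap. It affects both your write-up and the justification of case~(1) that you rely on. There, injectivity of $\inf_{G/G_K}$ on $H^2(G,\mu_m)$ is deduced from $H^1(G_L,\mu_m)^G\cong\hom(G_L,\mu_m^G)$, which is false. Since $G_L$ acts trivially on $\mu_m$, the invariants are the homomorphisms $f\colon G_L\to\mu_m$ with $f(\tau h\tau^{-1})=\tau(f(h))$ for $\tau\in G_K$, i.e.\ the $G$-equivariant ones, and these need not vanish when $\mu_m^G=1$.

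Inflation can genuinely fail to be injective. Let $G=S_3$ act on $\mu_3$ through the sign. Then $\mu_3^G=1$, and $H^2(S_3,\mu_3)\cong\Z/3$ is generated by the class of the non-split extension $1\to C_3\to D_9\to S_3\to 1$. If $L$ is the fixed field of the normal $C_3$ in a $D_9$-extension $L'/K$, the projection $G_L\twoheadrightarrow\Gal(L'/L)\cong\mu_3$ is a $G$-invariant element whose transgression is this class, so $\inf_{G/G_K}$ kills it. Your fallback does not repair this: $H^0(G,\mu_m)=1$ gives no control on the transgression image in $H^2(G,\mu_m)$, and ``$y$ comes from $H^1(G,\bD^\times)=1$'' is merely a restatement of $y=1$.

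What is missing is a real use of $\ker(\phi)$. Every $D_v$ is a subgroup of $G=\Gal(\bD/D_K)$, so Hilbert~90 also gives $H^1(D_v,\bD^\times)=1$. The connecting maps then identify $\ker(\phi)\cap\ker(\xi)$ with the set of $z\in H^2(G,\mu_m)$ that die in $H^2(G,\bD^\times)$, die under $\inf_{G/G_K}$, and satisfy $\res_{D_v}(z)=0$ for all finite $v$. This yields the theorem under extra hypotheses:
\begin{itemize}
\item If each Sylow $p$-subgroup of $G$ with $p\mid m$ is cyclic, it is a decomposition group by Chebotarev, and restriction to it is injective on the $p$-part of $H^2(G,\mu_m)$. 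This settles the $S_3$ example.
\item If $D_K$ is totally real, complex conjugation is a central element of $G$ acting on $\mu_m$ by inversion. Hence $2$ kills $H^2(G,\mu_m)$, which then vanishes as $m$ is odd, and case~(2) of Corollary~\ref{coro2} applies.
\end{itemize}

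In general, however, unramified places cannot suffice. Take $G=S_3\times C_3$ acting on $\mu_3$ through the sign of the first factor, and let $x,y$ be a basis of $\hom(P,\Z/3)$ for the $3$-Sylow $P\cong(\Z/3)^2$. The class $x\cup y$, transported to $H^2(P,\mu_3)$, is $G/P$-stable, so it comes from a nonzero element of $H^2(G,\mu_3)$. Yet it restricts to $0$ on every cyclic subgroup. A complete proof would therefore have to exploit the ramified decomposition groups together with the Brauer and inflation conditions, which neither your proposal nor the argument through Corollary~\ref{coro2} does.
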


\begin{proof}
By Hilbert 90 and Lemma \ref{lemmaCM}, \(H^1(G,\bD^\times)=1\). Thus, one can apply Corollary \ref{coro2}.
\end{proof}

\begin{example}\label{example: ec}
Let \(E/\Q\) be an elliptic curve with CM by a (quadratic imaginary) field \(L\). For all \(m\) odd such that \(\mu_m \subseteq L\), \((E, m)\) satisfies the local-global principle. Indeed, this follows immediately from Theorem \ref{thm: main_Gal}. Notice that since \(\mu_m \subseteq L\), the only possible choice is \(m = 3\).
\end{example}

\begin{example}
\label{example: GGL}
Let \(J_d/\Q\) be the Jacobian variety of the projective curve \(C_d/\Q\) given by the affine equation \(y^2=x^d+1\). 
Since the map \((x,y)\mapsto (\zeta_dx,y)\) is an automorphism of \(C_d/\overline{\Q}\), where \(\zeta_d\) is a primitive \(n\)-th root of unity, we see that \(\Q(\mu_d)\hookrightarrow \bD\). Moreover, \(D = \bD^G = \Q\) in this case.
By \cite[Theorem 3.0.1]{GGL}, the abelian variety \(J_d\) admits a canonical factorization up to isogeny over \(\Q\) 
which contains exactly one factor \(A_{d,m}\) for each positive divisor \(m\) of \(d\), except \(m=1\) and \(m=2\); moreover, if \(m\) is odd, then \(A_{d, m}\) is geometrically simple and has complex multiplication by \(\Q(\zeta_m)\). 
Then, Theorem \ref{thm: main_Gal} implies that, for any \(d\) and \(m\geq 3\) an odd divisor of \(d\), \((A_{d, m}, m)\) satisfies the local-global principle.
\end{example}

\begin{remark}
The content of the Examples \ref{example: ec} and \ref{example: GGL} also follows from Proposition \ref{prop3}.
\end{remark}

\begin{remark}
In the setup of Example \ref{example: GGL}, taking \(d = 3p\) for \(p \equiv 13 \pmod{24}\) a prime, \cite{ACF} constructs a counter-example for quadratic twists of abelian varieties. This counter-example relies heavily on the control that one has on the existence of subextensions of \(\Q(\zeta_d)/\Q\) of degree a power of \(2\), the only prime for which a lot of what we prove here breaks down. 
\end{remark}

\subsection{Further applications} \label{ssec: further apps}

By \cite[Theorem 2, Chapter IV.21]{av} and Assumption \ref{ass: mum in end for ab vars}, for \(A\) geometrically simple of dimension \(g\), Euler's totient \(\phi(m)\) divides the degree of the endomorphism algebra \(\bD\) over \(\Q\), which in turn is a divisor of \(2g\). For \(m \geq 3\) odd, this implies that:
 \begin{itemize}
        \item If \(g=2^a\) for any \(a \ge 0\), then \(m\) is a squarefree product of Fermat primes.
        \item If \(g=3\), then \(m=3\) or \(m=7\) or \(m=9\).
        \item If \(g=5\), then \(m=3\) or \(m=11\).
        \item If \(g=6\), then \(m \in \{3,5,7,9,13,21\}\).
        \item If \(g=7\), then \(m=3\).
\end{itemize}

\begin{corollary}\label{cor : easylgp}
    Let \(A\) be as above and assume that \(G\) acts trivially on \(\mu_m\), see \S\ref{ssec: muminvar}.
    If \(g=2^a\), for some \(a \ge 0\), or \(g \leq 7\), then, for all odd \(m \ge 3\), \((A,m)\) satisfies the local-global principle. 
\end{corollary}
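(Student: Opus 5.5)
The plan is to reduce to Theorem \ref{thm : coprimality} via Lemma \ref{lemma: if lgp for rep then lgp for abvar}. Since \(G\) acts trivially on \(\mu_m\), we have \(\mu_m \subseteq \bD^G = D_K\). Take \(E\) to be the centre \(Z\) of \(D_K\); by Albert's classification it is a number field. Here \(E\) should be central in \(\bD\), and for \(A\) geometrically simple one checks that \(Z\) lies in the centre of \(\bD\), possibly after replacing it by \(Z \cap Z(\bD)\), which still contains \(\mu_m\) when \(\mu_m\) is central. Now consider \(\rho_{A,\lambda}\colon G_K \to \GL_{E_\lambda}(V_\lambda(A))\). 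This is a geometric representation of dimension \(n = 2g/[E:\Q]\). Moreover \(\mu_m \subseteq D_K(\rho_{A,\lambda}) = D_K \otimes_E E_\lambda\) by Faltings's theorem. Theorem \ref{thm : coprimality} then gives the local-global principle for \((\rho_{A,\lambda}, m)\) whenever \(\gcd(m,n)=1\), and Lemma \ref{lemma: if lgp for rep then lgp for abvar} transfers it to \((A,m)\).

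The remaining task is to show \(\gcd(m, n) = 1\) in each listed case. Since \(\Q(\zeta_m) \subseteq E\), we have \(\phi(m) \mid [E:\Q]\), and \([E:\Q] \mid 2g\). Hence \(n\) divides \(2g/\phi(m)\). The case analysis goes as follows.

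\begin{itemize}
\item If \(g = 2^a\), then \(n\) is a power of \(2\) and \(m\) is odd, so \(\gcd(m,n)=1\).
\item If \(g=3\), then \(m\in\{3,7,9\}\) with \(\phi(m)\in\{2,6,6\}\), so \(n \mid 3\) or \(n \mid 1\). The bad subcase is \(m=3\), \(n=3\), \([E:\Q]=2\).
\item If \(g=5\), then \(m \in\{3,11\}\). The bad subcase is \(m=3\), \(n=5\)? No: \(\gcd(3,5)=1\), so \(g=5\) is fine.
\item If \(g=6\), then \(n \mid 12/\phi(m)\). This is coprime to \(m\) except possibly \(m=3\) with \(n\in\{3,6\}\), and \(m=9\) with \(n \mid 2\), which is fine.
\item If \(g=7\), then \(m=3\) and \(n \mid 7\), which is fine.
\end{itemize}

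The residual cases are therefore \(m=3\) with \(3 \mid n\), namely \(g=3\) or \(g=6\) with \(\Q(\zeta_3) = E\) up to degree. I expect these to be the main obstacle. I would first try to get a finer divisibility from Albert's classification. In the bad cases \(D_K\) is a division algebra of degree \(d\) over the CM field \(E=\Q(\zeta_3)\) with \(d^2 e \mid 2g\). Also \(V_\lambda\) is a free module over \(D_K\otimes E_\lambda\), so \(n\) is divisible by \(d^2\).

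One could instead apply Theorem \ref{thm : coprimality} to a smaller-dimensional piece: the \(D_K\otimes E_\lambda\)-module structure splits \(V_\lambda\) as \(W^{\oplus d}\). One would then work with the representation \(W\) of dimension \(n/d\) and its determinant. However, when \(D_K = E\) this gives nothing new.

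So the first thing to check is whether the bad case \(g=3\), \(D_K=\Q(\zeta_3)\) can occur for geometrically simple \(A\) with \(\mu_3\) fixed. Type IV with \(e=1\) and \(g=3\) is possible, for example Picard-type varieties of signature \((2,1)\). If so, the corollary's range \(g\le 7\) should rely on a different argument there. I would then use Theorem \ref{thm: main_Gal}-style Hilbert 90 arguments or Corollary \ref{coro2} via \(H^2(G,\mu_3)\), using that \(G\) embeds into the outer automorphisms of \(\bD\) over \(D_K\). In those dimensions this group should have order prime to \(3\), which would give the result through Proposition \ref{prop3}.
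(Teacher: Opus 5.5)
Your reduction to Theorem \ref{thm : coprimality} via Lemma \ref{lemma: if lgp for rep then lgp for abvar} matches the paper. So does your use of $n\mid 2g/\phi(m)$ to dispose of $g=2^a$, $g=5$, $g=7$, and of $g\in\{3,6\}$ with $m\ge 5$. The gap is the residual case $g\in\{3,6\}$, $m=3$. You leave it open, and the route you sketch rests on a false claim: $G$ need not have order prime to $3$. For instance, take the $3$-dimensional CM factor of the Jacobian of $y^2=x^9+1$ (Example \ref{example: GGL}) over $K=\Q(\zeta_3)$. It has $\bD=\Q(\zeta_9)$, and its CM type $\{1,2,4\}$ is primitive, so its reflex field $\Q(\zeta_9)$ is not contained in $K$; since $D_K\supseteq\Q(\zeta_3)$ via $(x,y)\mapsto(\zeta_3x,y)$, this forces $D_K=\Q(\zeta_3)$. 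Hence $G\cong C_3$ acts trivially on $\mu_3$, $n=3$, and $H^2(G,\mu_3)\cong\mu_3\neq 1$. So neither Theorem \ref{thm : coprimality}, nor Corollary \ref{coro2}, nor Proposition \ref{prop3} applies. For $g=6$, nothing a priori excludes $|G|\in\{3,6\}$ either.

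The missing idea is to determine $G$ rather than to keep working with $n$. Since $\Q(\zeta_3)$ is central in $\bD$, the algebra $\bD$ is of Type IV. Apply Albert's classification to $\bD$, not to $D_K$: it gives $e_0\delta^2\mid g$, and since $3$ and $6$ are squarefree, $\delta=1$, so $\bD$ is a CM field. By Lemma \ref{lemmaCM}, $G\cong\Gal(\bD/D_K)$. Together with $\Q(\zeta_3)\subseteq D_K$ and $[\bD:\Q]\mid 2g$, this gives $|G|\mid 3$ for $g=3$ and $|G|\mid 6$ for $g=6$, so $G$ is cyclic or isomorphic to $S_3$.

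If $G$ is cyclic, Corollary \ref{cor : inertiffcyc} gives the local-global principle for every $m$, because some place has decomposition group all of $G$. This covers both the Picard-type case $\bD=D_K$ you worried about, where $G=1$, and the $C_3$ example above. If $G\cong S_3$, apply Proposition \ref{prop: case of S3} with $N$ the $3$-Sylow subgroup. Its index $2$ is prime to $3$. Being cyclic, it is a decomposition group by Chebotarev. The $H^1$ hypotheses follow from Hilbert 90, since for commutative $\bD$ the $\chi$-twisted actions coincide with the untwisted one.
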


\begin{proof}
To apply Theorem \ref{thm : coprimality} and Lemma \ref{lemma: if lgp for rep then lgp for abvar}, we can assume that \(\mu_m \subseteq E\) in Assumption \ref{ass: mum in end for ab vars}. In particular, \(n = \dim_F(V_\ell(A))\) divides \({2g}/\phi(m)\), since \(\phi(m) = [\Q(\mu_m) \colon \Q]\). Thus, whenever \(\phi(m)  = 2g\), then \(n = 1\) and the local-global principle holds. In all the following cases Theorem \ref{thm : coprimality} applies:
\begin{itemize}
    \item If \(g=2^a\), then \(n\) is also a power of \(2\), thus coprime to \(m\).
    \item If \(g=5\), then \(m=3\) or \(11\). Since \(n\) is a divisor of \(10\), in both cases we have \((m,n)=1\).
    \item If \(g=7\), then \(m=3\), which is coprime to \(2g=14\), hence to \(n\).
    \item If \(g=3\) or \(6\) and \(m \ge 5\), a direct computation in the cases listed above shows that \((m, n)=1\).
\end{itemize}
In the remaining cases, \(g = 3\) or \(6\) and \(m = 3\). Let \(\delta^2\) denote the dimension of \(\bD\) over its center \(Z(\bD)\). Also write \(e_0\) for the degree of the maximal totally real subextension of \(Z(\bD)\) over \(\Q\).
Our assumptions imply that \(\bD\) is of Type IV, and, by Albert's classification, we have \(e_0 \delta^2 \mid g\). Since \(3\) and \(6\) are squarefree, \(\bD\) is a commutative CM number field. Therefore
\[
\Q(\zeta_3) \subseteq D_K \subseteq \bD
\]
is a tower of field extensions, and \([\bD : \Q] \mid 2g\). Let \(G=\Gal(\bD/D_K)\).

Assume \(g=3\). Then \(G\) is either trivial or cyclic of order \(3\). In the first case, we have \(\bD=D_K\) and the local-global principle holds by Corollary \ref{cor : inertiffcyc}. In the second case, it holds by Corollary \ref{cor : inertiffcyc}.

Assume \(g=6\). Then \([D_L : D_K] \mid 6\), so \(G\) is either cyclic or isomorphic to \(S_3\). Again, the case when \(G\) is cyclic follows from Corollary \ref{cor : inertiffcyc}. The case \(G \cong S_3\) is covered by Proposition \ref{prop: case of S3} with \(N\) the unique \(3\)-Sylow of \(G\). Since \(N\) is cyclic, by Chebotarev's density theorem, it is the decomposition group of some unramified prime. Moreover, \(H^1(G, \bD^\times) = 1 = H^1(N, \bD^\times)\), by Hilbert 90.
\end{proof}


\bibliographystyle{alpha}
\bibliography{bib.bib}

\end{document}